\documentclass[pdf, 12pt]{article}
 \usepackage{pdftricks,pst-node}
\usepackage{graphicx,amsmath,amssymb,amsthm}
\usepackage{marsden_article}
\usepackage{epstopdf}
\usepackage{cite}

\input xy
\xyoption{all}

\def\E{\mathbb{E}}

\newtheorem{Assumption}{Assumption}[section]
\newtheorem{Remark}{Remark}[section]

\newtheorem*{thm21}{Theorem 2.1}
\newtheorem*{thm22}{Theorem 2.2}
\newtheorem*{thm23}{Theorem 2.3}
\begin{document}

\title{
Pathwise Accuracy \& Ergodicity of \\
Metropolized Integrators for SDEs
}

\author{Nawaf Bou-Rabee\thanks{Courant Institute of Mathematical Sciences, New York University, 
251 Mercer Street, New York, NY 10012-1185 ({\tt nawaf@cims.nyu.edu}).
 N.~B-R.~was supported by NSF Fellowship \# DMS-0803095. }  \and 
Eric Vanden-Eijnden\thanks{Courant Institute of Mathematical Sciences, New York University, 
251 Mercer Street, New York, NY 10012-1185 ({\tt eve2@cims.nyu.edu}).} }
\maketitle

\medskip

\begin{abstract}

	Metropolized integrators for ergodic
	stochastic differential equations (SDE) are proposed which (i) are
	ergodic with respect to the (known) equilibrium distribution of the
	SDE and (ii) approximate pathwise the solutions of the SDE on
	finite time intervals. Both these properties are demonstrated in the
	paper and precise strong error estimates are obtained. It is also
	shown that the Metropolized integrator retains these properties 
	even in situations where the drift in the SDE is nonglobally
	Lipschitz, and vanilla explicit integrators for SDEs typically
	become unstable and fail to be ergodic.
  
\end{abstract}

\noindent {\bf Keywords}

	stochastic differential equations, Metropolis-Hastings algorithm, 
	strong convergence, ergodicity, geometric ergodicity, variational integrators

\medskip

\noindent {\bf AMS Subject Classification}

	65C30  (65C05, 60J05, 65P10)



\section{Introduction}

\paragraph{Purpose of the Paper.}

This paper considers SDEs whose solution possesses a probability
transition density that satisfies a detailed balance-type condition with respect
to a known probability distribution.  In this context the paper analyzes 
integrators for such SDEs that
\begin{description}
\item[(i)]\label{ergo} are ergodic with respect to the exact equilibrium
  distribution of the SDE on infinite time intervals; and,
\item[(ii)]\label{strong} strongly converge to the solutions of the SDE on
  finite time intervals.
\end{description}
Pure sampling methods can accomplish (i), but they typically
do not approximate the solution to the SDE.  Integrators for SDEs
certainly satisfy (ii), but they often are divergent on
infinite time intervals or ergodic with respect to a different
equilibrium distribution \cite{TaTu1990, Ta1995, MaStHi2002,
  MiTr2004}.  This paper shows that a Metropolized integrator can
simultaneously accomplish these goals.

\paragraph{Motivation.}

This paper is motivated by SDEs that arise in molecular dynamics (MD).
Although the numerical analysis of general SDEs is well-studied (see,
e.g., \cite{Ta1995, MiTr2004}), the relevance of this analysis to SDEs
that arise in MD is limited.  Indeed such SDEs are characterized by
complex drift vector fields that have limited regularity, and one is typically
interested in their solutions over long time intervals. Let us
elaborate briefly on the consequences of these two observations.

The drift vector field of SDEs that arise in MD involve the
gradient of an empirically derived potential energy function.  Due
to singularities in the potential energy, this drift vector field
possesses limited regularity, e.g., it is nonglobally Lipschitz.  
The drift vector field is also computationally costly to evaluate 
because the potential force involves intricate short and 
long-range interactions between atoms.  As a result implicit 
methods are typically too costly in this application area.

Moreover, the lack of regularity in the drift of SDEs that arise in MD
causes explicit discretizations to be stochastically unstable in
general.  This implies that, in principle, such schemes cannot be used 
to sample the equilibrium distribution of the SDE. Some approaches to
stabilize explicit discretizations of SDEs include
\begin{itemize}
\item adaptive time-stepping~\cite{LaMaSt2007};
\item multiple time-stepping~\cite{TuBe1991};
\item method of rejecting exploding trajectories~\cite{MiTr2005}; and
\item using bounded random numbers instead of Gaussian ones.
\end{itemize}
These methods are often ergodic but typically introduce discretization
errors in the sampling (i.e. the equilibrium distribution of the
numerical scheme is not exactly the same as the distribution of the
SDE). A standard way to remove these errors is to resort to a Monte-Carlo
method designed for sampling, e.g., hybrid Monte-Carlo algorithms 
\cite{DuKePeRo1987, Ho1991, AkRe2008}.  This procedure, however, has the 
effect that the dynamics of the method becomes unrelated to that of the SDE. 
Indeed, in this context, the main question often becomes how to make the 
sampling as efficient as possible by accelerating the rate of convergence 
of the method.

Here we show that a Metropolis-adjusted explicit integrator can
both be ergodic with respect to the known equilibrium distribution of
the SDE and captures the dynamical behavior of the solutions of this
SDE. To establish this second property, one needs to estimate the
effect on the dynamics of the rejections in the Metropolis-Hasting
method.  The analysis of this effect in the context of a
Metropolized integrator for SDEs with the special structure relevant to 
MD is one of the main objectives of this paper.

\paragraph{Organization of the Paper.}

In \S \ref{sec:MainResults}, the main results of the paper are presented 
without proofs and put into context.  In \S \ref{sec:NumericalIllustration}, 
numerical validation of the main results is provided.  The proofs of the 
main results are organized according to the type of dynamic and 
discretization considered.  In  \S \ref{sec:OverdampedLangevin},  the proofs 
pertaining to a forward Euler-Maruyama discretization of 
overdamped Langevin dynamics and its Metropolis-adjustment can be found.  
In \S \ref{sec:InertialLangevin},  the proofs pertaining to a 
St\"{o}rmer-Verlet based discretization  of inertial 
Langevin dynamics and its Metropolis-adjustment can be found.  
We wrap up the paper with a conclusion in \S \ref{sec:Conclusion}.


\section{Main Results} \label{sec:MainResults}


\subsection{Overdamped Langevin}

In the first part of the paper, we shall focus on overdamped Langevin 
dynamics on a energy landscape defined by a potential energy 
function $U \in C^{\infty}(\mathbb{R}^n, \mathbb{R})$: 
\begin{equation}
    d\boldsymbol{Y} = -\nabla U(\boldsymbol{Y}) dt + \sqrt{2 \beta^{-1} } d \boldsymbol{W}
    \label{SDE1}
\end{equation}
Here $\nabla U(\boldsymbol{x}): \mathbb{R}^n \to \mathbb{R}^n$ denotes the
gradient of the function $U$, $\boldsymbol{W}$ is a standard
$n$-dimensional Wiener process, or Brownian motion, and $\beta>0$ is a
parameter referred to as the inverse temperature.  Under certain
regularity conditions on the potential energy, the solution to \eqref{SDE1} is
geometrically ergodic with an invariant probability measure $\mu$ that
possesses the following density $\pi(\boldsymbol{x})$ with respect to 
Lebesgue measure\cite{Ha1980, RoTw1996A, RoTw1996B}:
\begin{equation}
  \pi(\boldsymbol{x}) = Z^{-1} \exp(-\beta U(\boldsymbol{x}))
  \label{IM}
\end{equation}
where $Z= \int_{\mathbb{R}^n} \exp(-\beta U(\boldsymbol{x}))d\boldsymbol{x}$. 
Next we recall various integration strategies for \eqref{SDE1} and 
summarize their properties.

\paragraph{Forward Euler-Maruyama.}

Let $N$ and $h$ be given, set $T= N h$ and $t_k = h k $ for
$k=0,...,N$, and consider the following explicit Euler-Maruyama
discretization of~\eqref{SDE1}:
\begin{equation}
    \Tilde{\boldsymbol{X}}_{k+1} =\Tilde{\boldsymbol{X}}_k 
    - h \nabla U(\Tilde{\boldsymbol{X}}_k)
    +  \sqrt{2 \beta^{-1}}  (\boldsymbol{W}(t_{k+1}) - \boldsymbol{W}(t_{k}))
    \label{ULA}
\end{equation}
where $ \Tilde{\boldsymbol{X}}_k \approx \boldsymbol{Y}(t_k)$.
The iteration rule~\eqref{ULA} defines a Markov chain that possesses a
transition kernel with the following smooth, strictly positive probability 
transition density:
\begin{equation}
  q_h(\boldsymbol{x}, \boldsymbol{y}) = (4 \pi \beta^{-1} h)^{-n/2} 
  \exp\left(  -  \frac{\left| \boldsymbol{y} - \boldsymbol{x} 
        + h \nabla U(\boldsymbol{x} ) \right|^2}{ 4 \beta^{-1} h} \right) 
  \label{ULAdensity}
\end{equation}  
Hence, the chain is irreducible with respect to Lebesgue
measure.  To be consistent with the literature, the discrete-time 
Markov chain generated by Euler-Maruyama is called the 
unadjusted Langevin algorithm (ULA)\cite{RoTw1996A}.

If $\nabla U$ is globally Lipschitz and $h$ is small
enough, ULA~\eqref{ULA} can often be shown to be:
\begin{itemize}
\item first-order strongly convergent to the solution orbits of \eqref{SDE1} on finite time
intervals;
\item geometrically ergodic on infinite-time intervals with respect to an invariant measure 
that is a first-order approximant to the invariant measure $\mu$ of \eqref{SDE1}.
\end{itemize}
The second property is typically established using a Talay-Tubaro expansion of the 
global weak error of  ULA\cite{TaTu1990}.

On the other hand, when $\nabla U$ is nonglobally Lipschitz ULA becomes a 
transient Markov chain for any $h>0$\cite{Nu1984,MeTw1996}.  
In fact, all moments of Euler-Maruyama 
are unbounded on long time-intervals for any initial condition 
$\boldsymbol{x} \in \mathbb{R}^n$, i.e., for any
integer $\ell \ge 1$ and for any $h>0$
\begin{equation}
  \E^{\boldsymbol{x}} \{ | \Tilde{\boldsymbol{X}}_k |^{2 \ell} \} \to \infty ~~~\text{as}~~~k \to \infty 
\label{emunboundedmoments}
\end{equation}
where $\mathbb{E}^{\boldsymbol{x}}$ denotes the expectation 
conditional on $\Tilde{\boldsymbol{X}}_0=\boldsymbol{x}$. 
See, e.g.,\cite{MaStHi2002, Ta2002}.   As is well known in the literature, 
a Metropolis-Hastings method can stabilize ULA.

\paragraph{Metropolized Forward Euler-Maruyama.}

A Metropolis-Hastings method is a quite general Monte-Carlo method for
sampling from a known probability distribution \cite{MeRoRoTeTe1953,Ha1970}.  
The method generates a Markov chain from a given proposal Markov chain 
as follows.  The algorithm computes a proposal move according to the 
proposal chain and accepts this proposal with a probability that ensures the
Metropolized chain is ergodic with respect to the given probability
distribution.  Here we shall focus on the Metropolized Euler-Maruyama
integrator defined in terms of the equilibrium density $\pi$~\eqref{IM}
and the transition density $q_h$~\eqref{ULAdensity}.

Let $\zeta_k \sim U(0,1)$ for $k=0,...,N-1$.  Given $h$ and
$\boldsymbol{X}_k$ the algorithm calculates a proposal move
using the Euler-Maruyama updating scheme in~\eqref{ULA}:
\begin{equation}   \label{MALAproposal}
  \boldsymbol{X}^*_{k+1} = \boldsymbol{X}_k - h
  \nabla U(\boldsymbol{X}_k ) + \sqrt{2 \beta^{-1}}
  (\boldsymbol{W}(t_{k+1}) - \boldsymbol{W}(t_{k}))
\end{equation}
and accepts this proposal with a probability 
\begin{equation}
  \label{MALAacceptreject}
  \alpha_h(\boldsymbol{x}, \boldsymbol{y}) =  1 \wedge 
  \frac{q_h(\boldsymbol{y},\boldsymbol{x}) \pi(\boldsymbol{y}) }
  { q_h(\boldsymbol{x},\boldsymbol{y}) \pi(\boldsymbol{x}) }  \text{.}
\end{equation}
That is, the update is defined as:
\begin{equation}
  \label{MALA}
  \boldsymbol{X}_{k+1} =
\begin{cases}
  \boldsymbol{X}^*_{k+1}\qquad &
  \text{if}~~\zeta_k<\alpha_h(\boldsymbol{X}_k,
  \boldsymbol{X}^*_{k+1} )\\
  \boldsymbol{X}_k & \text{otherwise}
\end{cases} 
\end{equation}
for $k=0,...,N-1$.  To be consistent with the literature, we will refer 
to the Metropolized Euler-Maruyama integrator as the Metropolis-adjusted 
Langevin algorithm (MALA)\cite{RoTw1996A}. By construction, MALA preserves 
the invariant measure $\mu$ of \eqref{SDE1}.   If $\nabla U$ is globally Lipschitz and $h$ 
small enough, one can often show that MALA is geometrically ergodic\cite{RoTw1996A}.  
See, e.g., Theorem 4.1 of\cite{RoTw1996A}.   However,  if $\nabla U$ is nonglobally Lipschitz, 
MALA is often no longer geometrically ergodic.   Still, 
for any  $g: \mathbb{R}^n \to \mathbb{R}$ with $\mu(g) < \infty$,
\begin{equation} \label{Xpreservesmu}
\E_{\mu} \E^{\boldsymbol{x}} \left\{ g( \boldsymbol{X}_k ) \right\}   
= \int_{\mathbb{R}^n} g d\mu,~~\forall~~ k \in \mathbb{N}
\end{equation}
where $\E_{\mu} \E^{\boldsymbol{x}}$ denotes
expectation conditioned on the initial distribution being the
equilibrium distribution of \eqref{SDE1}, i.e.,
\[
\mathbb{E}_{\mu} \mathbb{E}^{\boldsymbol{x}} \left\{ g( \boldsymbol{X}_k ) \right\} = 
\int_{\mathbb{R}^n} \mathbb{E}^{\boldsymbol{x}} \left\{ g( \boldsymbol{X}_k ) \right\}  \mu(d \boldsymbol{x}) \text{.}
\]
The identity \eqref{Xpreservesmu} is obviously a significant 
improvement to \eqref{emunboundedmoments}.

When $\nabla U$ is nonglobally Lipschitz, MALA is not geometrically ergodic because 
ULA is transient (see Theorem 4.2 of\cite{RoTw1996A}).  To correct this problem, 
Roberts and Tweedie proposed a modification of MALA which truncates the drift in regions 
where the underlying Euler method can be explosive.   Specifically the proposal move
\eqref{MALAproposal} is modified to a MALA algorithm with bounded drift:
\begin{align}  \label{MALTAproposal}
  \boldsymbol{Z}^*_{k+1} =   \boldsymbol{Z}_k 
  -  h \frac{\nabla U( \boldsymbol{Z}_k )}{1 \vee h | \nabla U( \boldsymbol{Z}_k ) | }  
  + \sqrt{2 \beta^{-1}} (\boldsymbol{W}(t_{k+1}) - \boldsymbol{W}(t_{k}))  
\end{align}
When $ | \nabla U( \boldsymbol{Z}_k ) | < 1/h$, \eqref{MALTAproposal} 
is the Euler-Maruyama update, and otherwise, the proposal drift
in \eqref{MALTAproposal} preserves the direction of the drift in ULA, 
but normalizes the amplitude in all degrees of freedom.   The Metropolis-Hastings 
method with this modified proposal move will be referred to  as the 
Metropolis-adjusted Langevin truncated algorithm (MALTA)\cite{RoTw1996A}.  
By comparison to a random-walk-based Metropolis algorithm, one can often show that
MALTA is geometrically ergodic\cite{RoTw1996A, At2005, JaHa2000}.   
While this {\em ad hoc} correction to MALA makes MALTA geometrically 
ergodic, the relation of MALTA to the original diffusion \eqref{SDE1} 
remains to be established.

\paragraph{Main Result I: Strong Convergence of MALA \& MALTA.}

In the paper we shall consider the situation where $\nabla U$ is nonglobally 
Lipschitz. Roughly speaking, MALTA corrupts Euler-Maruyama orbit 
by the random rejections in~\eqref{MALA} with the modified 
proposal~\eqref{MALTAproposal} and the {\em ad hoc} truncation of the drift.  
Yet, we show in the paper that MALTA still approximates pathwise the solution 
of \eqref{SDE1}.  The precise statement is:
\begin{theorem}[MALTA Strong Accuracy]
  Assume \ref{sa}.    Then for every $E_0>0$ and $T>0$,
  there exists $h_c(E_0) > 0$ and $C(T, E_0)>0$ such that for all
  $h<h_c$, for all $\boldsymbol{x} : U(\boldsymbol{x}) \le E_0$, and
  for all $t \in [0, T]$,
\[
\left( \E^{\boldsymbol{x}} \left\{ \left|\boldsymbol{Z}_{\lfloor t/h
        \rfloor} - \boldsymbol{Y}(t) \right|^2 \right\}
\right)^{1/2} \le C(T, E_0) h^{3/4} \text{.}
\]
\label{MALTAaccuracy}
\end{theorem}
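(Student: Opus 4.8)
The plan is to realize the MALTA chain $\{\boldsymbol Z_k\}$, the unrejected truncated Euler proposal chain $\{\widehat{\boldsymbol X}_k\}$ (the recursion \eqref{MALTAproposal} with every proposal accepted), and the diffusion $\boldsymbol Y$ of \eqref{SDE1} on a common Brownian motion $\boldsymbol W$, all started at $\boldsymbol x$, and to split
\[
\bigl(\E^{\boldsymbol x}|\boldsymbol Z_k-\boldsymbol Y(t_k)|^2\bigr)^{1/2}\le\bigl(\E^{\boldsymbol x}|\boldsymbol Z_k-\widehat{\boldsymbol X}_k|^2\bigr)^{1/2}+\bigl(\E^{\boldsymbol x}|\widehat{\boldsymbol X}_k-\boldsymbol Y(t_k)|^2\bigr)^{1/2}.
\]
Because $-\nabla U\in C^\infty$ and the noise in \eqref{SDE1} is additive, the last term is $O(h)$: on the event that all trajectories remain in a fixed sublevel set $\{U\le E\}$ — on which, for $h<h_c(E)$, the truncation in \eqref{MALTAproposal} is inactive and $\nabla U$ is Lipschitz — the classical local-error/Gronwall argument gives strong order one for Euler--Maruyama. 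The remaining, new, term $\boldsymbol Z_k-\widehat{\boldsymbol X}_k$ measures precisely the corruption of the Euler orbit by the Metropolis rejections. A preliminary technical ingredient is a localization uniform in $h$: the Lyapunov/confinement hypotheses of Assumption~\ref{sa} give $h$-independent moment bounds for $\boldsymbol Y$, $\widehat{\boldsymbol X}$ and $\boldsymbol Z$ on $[0,T]$ (the truncation preventing $\widehat{\boldsymbol X}$ and the MALTA proposals from exploding), and together with a soft bootstrap — as long as $\boldsymbol Z$ stays close to $\boldsymbol Y$ it cannot leave a fixed sublevel set — this lets one run all the estimates below on the localized event, the complementary contribution being negligible once $E=E(T,E_0)$ is fixed.

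On that event $\boldsymbol Z$ and $\widehat{\boldsymbol X}$ obey the \emph{same} update rule, except that $\boldsymbol Z$ stays put on a rejection. Writing $A_k$ for acceptance at step $k$, $\Delta\boldsymbol W_k=\boldsymbol W(t_{k+1})-\boldsymbol W(t_k)$, $\delta_k=\boldsymbol Z_k-\widehat{\boldsymbol X}_k$, and introducing the missed increment
\[
\boldsymbol\rho_k:=\mathbf{1}_{A_k^c}\bigl(-h\nabla U(\boldsymbol Z_k)+\sqrt{2\beta^{-1}}\,\Delta\boldsymbol W_k\bigr),
\]
subtracting the two recursions gives $\delta_{k+1}=(I-hG_k)\delta_k-\boldsymbol\rho_k$, $\delta_0=0$, where $G_k$ is a mean-value Jacobian of $\nabla U$ with $\|G_k\|\le L_E$ on $\{U\le E\}$. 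Squaring, conditioning on the step filtration $\mathcal F_{t_k}$, and using Young's inequality gives
\[
\E\bigl\{|\delta_{k+1}|^2\mid\mathcal F_{t_k}\bigr\}\le(1+Ch)\,|\delta_k|^2+C\Bigl(h^{-1}\bigl|\E\{\boldsymbol\rho_k\mid\mathcal F_{t_k}\}\bigr|^2+\E\{|\boldsymbol\rho_k|^2\mid\mathcal F_{t_k}\}\Bigr),
\]
so everything reduces to per-step bounds on the first two conditional moments of the rejection increment $\boldsymbol\rho_k$.

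These come from the Metropolis acceptance rate. Expanding the log-acceptance ratio $R(\boldsymbol x,\boldsymbol y)=\log\bigl(q_h(\boldsymbol y,\boldsymbol x)\pi(\boldsymbol y)/q_h(\boldsymbol x,\boldsymbol y)\pi(\boldsymbol x)\bigr)$ of \eqref{MALAacceptreject}, \eqref{ULAdensity} along a proposal $\boldsymbol y=\boldsymbol x-h\nabla U(\boldsymbol x)+\sqrt{2\beta^{-1}}\Delta\boldsymbol W$, whose jump has size $O(h^{1/2})$, the contributions linear and quadratic in $\Delta\boldsymbol W$ (of sizes $O(h^{1/2})$ and $O(h)$) cancel identically, the leading surviving term being cubic in $\Delta\boldsymbol W$; hence, uniformly over $\{U\le E\}$,
\[
\E^{\boldsymbol x}\{R(\boldsymbol x,\boldsymbol X^*)^2\}=O(h^3),\qquad\text{whence}\qquad 1-\alpha_h=(1-e^{R})_+\le|R|,\qquad\E^{\boldsymbol x}\{1-\alpha_h\}=O(h^{3/2}),
\]
with $\boldsymbol X^*$ the one-step proposal from $\boldsymbol x$. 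Averaging over the uniform variate, $\E\{\mathbf{1}_{A_k^c}\mid\mathcal F_{t_k},\Delta\boldsymbol W_k\}=1-\alpha_h(\boldsymbol Z_k,\boldsymbol Z^*_{k+1})\le|R(\boldsymbol Z_k,\boldsymbol Z^*_{k+1})|$, so by Cauchy--Schwarz together with $\E|\Delta\boldsymbol W_k|^{2p}=O(h^p)$,
\[
\E\bigl\{\mathbf{1}_{A_k^c}|\Delta\boldsymbol W_k|^{p}\mid\mathcal F_{t_k}\bigr\}\le\bigl(\E\{R^2\mid\mathcal F_{t_k}\}\bigr)^{1/2}\bigl(\E|\Delta\boldsymbol W_k|^{2p}\bigr)^{1/2}=O\bigl(h^{(3+p)/2}\bigr),
\]
which at $p=1,2$ yields $\bigl|\E\{\boldsymbol\rho_k\mid\mathcal F_{t_k}\}\bigr|=O(h^2)$ and $\E\{|\boldsymbol\rho_k|^2\mid\mathcal F_{t_k}\}=O(h^{5/2})$; the per-step forcing in the recursion is then $O(h^{5/2})$, and a discrete Gronwall over the $N=T/h$ steps gives $\sup_{k\le N}\E^{\boldsymbol x}|\delta_k|^2=O(Nh^{5/2})=O(h^{3/2})$. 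Combined with the $O(h^2)$ Euler bound this is $\sup_{k\le N}\E^{\boldsymbol x}|\boldsymbol Z_k-\boldsymbol Y(t_k)|^2=O(h^{3/2})$, i.e., the asserted $h^{3/4}$ after a square root; the passage to an arbitrary $t\in[0,T]$ is routine via the continuous-time interpolant of the scheme. Heuristically, rejections occur with probability $O(h^{3/2})$ per step, hence $O(h^{1/2})$ of them on $[0,T]$, each displacing the orbit by a Brownian increment of size $O(h^{1/2})$, for a mean-square error $O(h^{1/2})\cdot h=O(h^{3/2})$.

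The crux, and the main obstacle, is the sharp second moment $\E^{\boldsymbol x}\{R^2\}=O(h^3)$, as opposed to merely the weak acceptance bound $\E^{\boldsymbol x}\{1-\alpha_h\}=O(h^{3/2})$: with only the latter, or without exploiting the near-martingale structure of the $\boldsymbol\rho_k$ (so that their variances, not their absolute values, accumulate over the $N=T/h$ steps), the accumulated rejection error does not close at the rate $h^{3/4}$. The bound $\E^{\boldsymbol x}\{R^2\}=O(h^3)$ rests on the cancellation that makes the leading log-acceptance term cubic rather than linear in the noise, and the technical difficulty is to make this expansion, and the attendant moment and stability bounds, hold uniformly in $h$ and over $\{U\le E\}$ even though $\nabla U$ is only locally Lipschitz; that is exactly where the drift truncation in \eqref{MALTAproposal} and the Lyapunov hypotheses of Assumption~\ref{sa} are used, to confine the chain to a fixed sublevel set up to a negligible event.
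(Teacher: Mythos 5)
Your core mechanism is the right one and closely parallels the paper's: the cancellation in the log-acceptance ratio that makes the leading term $O(h^{3/2})$ (so that $\E\{(1-\alpha_h)^2\}=O(h^3)$, the paper's Lemma~\ref{MALAstagnationprobability}), the exploitation of the small \emph{conditional mean} of the rejection increment (the paper's Lemma~\ref{MALAlocalaccuracy}~(B), local mean deviation $O(h^2)$) so that the cross term in the one-step recursion contributes $h^{-1}\cdot O(h^4)$ rather than $O(h^{5/4})$ per step, and the resulting recursion with forcing $O(h^{5/2})$ unravelling to a global mean-squared error $O(h^{3/2})$. Your decomposition through the unrejected truncated Euler chain $\widehat{\boldsymbol X}_k$ is a cosmetic variant of the paper's decomposition through the exact flow restarted at the numerical point ($\boldsymbol Y_{t_{k+1},t_k}(\boldsymbol Z_k)$ plus the flow-regularity Lemma~\ref{regularityofsolutions}); both yield the same recursion and the same rate.

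The genuine gap is the moment control for MALTA started \emph{out of equilibrium}, which you dispose of with a ``localization plus soft bootstrap.'' Two problems. First, your claim that the truncation by itself ``prevents $\widehat{\boldsymbol X}$ and the MALTA proposals from exploding'' and yields $h$-independent moment bounds on $[0,T]$ is not justified: the truncated drift is only bounded by $1$ \emph{per step}, so over $N=T/h$ steps it can contribute a displacement of order $T/h$, which is not uniform in $h$. What actually confines the chain is the Metropolis rejection of uphill moves, and the quantitative statement needed — $\sup_{h<h_c}\E^{\boldsymbol x}\{U(\boldsymbol Z_{\lfloor t/h\rfloor})^{\ell}\}\le C(E_0)$ uniformly for $U(\boldsymbol x)\le E_0$ — is precisely the paper's Lemma~\ref{MALTAmomentbound}, which is derived from the geometric ergodicity of MALTA (a Foster--Lyapunov drift condition for the Metropolized chain, imported from Atchad\'e and Roberts--Tweedie). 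This is a substantive external input, not a consequence of the truncation. Second, the bootstrap ``as long as $\boldsymbol Z$ stays close to $\boldsymbol Y$ it cannot leave a fixed sublevel set'' is circular as stated, since closeness to $\boldsymbol Y$ is the conclusion you are trying to prove; making it rigorous requires a stopping-time/continuation argument that you do not supply, and even then it would not replace the $h$-uniform moment bounds needed to control the error on the complementary event. Relatedly, you never isolate the event $\{h|\nabla U(\boldsymbol Z_k)|>1\}$ on which the truncation is active and the proposal is \emph{not} an Euler step; the paper handles it separately by Chebyshev, $\P(h|\nabla U(\boldsymbol Z_k)|>1)\le h^4\,\E\{|\nabla U(\boldsymbol Z_k)|^4\}$, again leaning on Lemma~\ref{MALTAmomentbound}. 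Without a proof of that moment bound your argument does not close.
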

Assumption~\ref{sa} can be found in \S\ref{preliminaries}.
This assumption remains valid for SDEs of the type \eqref{SDE1} in which 
the drift is nonglobally Lipschitz.  A key ingredient in the proof is 
geometric ergodicity of MALTA.   This property implies bounds on moments 
of MALTA at finite times.   The precise bounds can be found in Lemma 
\ref{MALTAmomentbound}.

Since MALA is not geometrically ergodic when $\nabla U$ is nonglobally Lipschitz, 
we are unable to obtain such bounds on moments of MALA at finite times.   
However, such bounds can be derived if MALA's initial condition is restricted to
the equilibrium distribution of \eqref{SDE1}.  This is because MALA by design 
preserves this distribution.  Hence, for MALA we are able to prove the following theorem.
We stress that MALTA's strong accuracy does not require this restriction on initial conditions.
\begin{theorem}[MALA Strong Accuracy from Equilibrium]
Assume~\ref{sa}. For all $T>0$ there exists $h_c > 0$ and $C(T)>0$ 
such that for all positive $h<h_c$ and for all $t \in [0, T]$,
\[
\left( \E_{\mu}\E^{\boldsymbol{x}} \left\{ \left|
      \boldsymbol{X}_{\lfloor t/h \rfloor} - \boldsymbol{Y}(t)
    \right|^2 \right\} \right)^{1/2} \le C(T) h^{3/4} \text{.}
\]
\label{MALAaccuracy}
\end{theorem}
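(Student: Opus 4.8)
The plan is to transfer the estimate of Theorem~\ref{MALTAaccuracy} from MALTA to MALA by exploiting the fact that the two proposal maps \eqref{MALAproposal} and \eqref{MALTAproposal} coincide on the set $G_h:=\{\boldsymbol{x}: h\,|\nabla U(\boldsymbol{x})|\le 1\}$, together with the invariance identity \eqref{Xpreservesmu}, which forces a chain started from $\mu$ to leave $G_h$ only with overwhelmingly small probability over a time window of length $T$. Concretely, I would run MALA and MALTA from the same $\mu$-distributed point, driven by the same Brownian increments $\boldsymbol{W}(t_{k+1})-\boldsymbol{W}(t_k)$ and the same uniforms $\zeta_k$; then whenever $\boldsymbol{X}_k=\boldsymbol{Z}_k\in G_h$ the two proposals agree, and if the common proposal also lies in $G_h$ the acceptance ratios \eqref{MALAacceptreject} agree and so do the updates. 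By induction, on the event
\[
A_n:=\bigcap_{j=0}^{n-1}\bigl(\{\boldsymbol{X}_j\in G_h\}\cap\{\boldsymbol{X}^*_{j+1}\in G_h\}\bigr),\qquad n:=\lfloor t/h\rfloor,
\]
one has $\boldsymbol{X}_n=\boldsymbol{Z}_n$. Moreover, since \eqref{Xpreservesmu} gives $\boldsymbol{X}_j\sim\mu$ for every $j$, Chebyshev's inequality yields $\P_\mu(\boldsymbol{X}_j\notin G_h)=\mu(\{|\nabla U|>1/h\})\le h^{2\ell}\mu(|\nabla U|^{2\ell})=O(h^{2\ell})$ for every $\ell$, the moments being finite by Assumption~\ref{sa}; conditioning on $\boldsymbol{X}_j\sim\mu$ and using the polynomial growth of $\nabla U$ and the Gaussian tails of the increment gives $\P_\mu(\boldsymbol{X}^*_{j+1}\notin G_h)=O(h^p)$ for every $p$ as well, so a union bound over $j\le T/h$ gives $\P_\mu(A_n^c)=O(h^p)$ for every $p$.

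Next I would split
\[
\E_\mu\E^{\boldsymbol{x}}\{|\boldsymbol{X}_n-\boldsymbol{Y}(t)|^2\}=\E_\mu\E^{\boldsymbol{x}}\{|\boldsymbol{X}_n-\boldsymbol{Y}(t)|^2\mathbf{1}_{A_n}\}+\E_\mu\E^{\boldsymbol{x}}\{|\boldsymbol{X}_n-\boldsymbol{Y}(t)|^2\mathbf{1}_{A_n^c}\}
\]
and dispose of the second (rare-event) term by Cauchy--Schwarz: it is at most $(\E_\mu\E^{\boldsymbol{x}}|\boldsymbol{X}_n-\boldsymbol{Y}(t)|^4)^{1/2}\P_\mu(A_n^c)^{1/2}$, where the fourth moment is bounded by $16\,\mu(|\cdot|^4)<\infty$ uniformly in $n,h,t$ — using \eqref{Xpreservesmu} with $g=|\cdot|^4$ for $\boldsymbol{X}_n$ and the invariance of $\mu$ for \eqref{SDE1} for $\boldsymbol{Y}(t)$ — so, taking $p$ large, this term is $O(h^{3/2})$.

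For the main term, on $A_n$ one has $\boldsymbol{X}_n=\boldsymbol{Z}_n$, hence
\[
\E_\mu\E^{\boldsymbol{x}}\{|\boldsymbol{X}_n-\boldsymbol{Y}(t)|^2\mathbf{1}_{A_n}\}\le\int_{\R^n}\E^{\boldsymbol{x}}\{|\boldsymbol{Z}_n-\boldsymbol{Y}(t)|^2\}\,\mu(d\boldsymbol{x}).
\]
I would split this integral at $U(\boldsymbol{x})=E_0$: on $\{U\le E_0\}$, Theorem~\ref{MALTAaccuracy} gives $\E^{\boldsymbol{x}}\{|\boldsymbol{Z}_n-\boldsymbol{Y}(t)|^2\}\le C(T,E_0)^2h^{3/2}$ provided $h<h_c(E_0)$; on $\{U>E_0\}$, the crude bound $\E^{\boldsymbol{x}}\{|\boldsymbol{Z}_n-\boldsymbol{Y}(t)|^2\}\le 2\E^{\boldsymbol{x}}\{|\boldsymbol{Z}_n|^2\}+2\E^{\boldsymbol{x}}\{|\boldsymbol{Y}(t)|^2\}\le C_T\Psi(\boldsymbol{x})$ with $\mu(\Psi)<\infty$ follows from the geometric-ergodicity moment bound for MALTA (Lemma~\ref{MALTAmomentbound}) and a standard moment estimate for \eqref{SDE1}. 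Since $\int_{\{U>E_0\}}\Psi\,d\mu$ decays exponentially as $E_0\to\infty$, choosing $E_0=E_0(h)$ to grow like $\log(1/h)$ — slowly enough that $h<h_c(E_0(h))$ for all small $h$, yet fast enough that $\int_{\{U>E_0(h)\}}\Psi\,d\mu=O(h^{3/2})$ — renders this integral $O(h^{3/2})$. Adding the two contributions and taking a square root yields the bound $C(T)h^{3/4}$.

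The hard part is this last balancing act in the main term: one needs $E_0$ large so that the $\mu$-tail $\int_{\{U>E_0\}}\Psi\,d\mu$ — which carries the crude, starting-point-dependent moment bounds — is $o(h^{3/2})$, while simultaneously respecting the step-size threshold $h<h_c(E_0)$ inherited from Theorem~\ref{MALTAaccuracy} and checking that the growth of $C(T,E_0)$ with $E_0(h)\asymp\log(1/h)$ does not degrade the $h^{3/2}$ rate; this forces one to track the dependence on $E_0$ of both the constant and the step-size threshold produced in the proof of Theorem~\ref{MALTAaccuracy}. An equivalent route, which I would fall back on if that bookkeeping proves awkward, avoids Theorem~\ref{MALTAaccuracy} and simply re-runs its proof for MALA: the only substantive change is that the finite-time moment bounds on the chain, which for MALTA come from geometric ergodicity, now come directly from the invariance identity \eqref{Xpreservesmu}, while the drift-truncation error term present in the MALTA analysis is absent for MALA.
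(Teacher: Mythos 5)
Your primary route --- coupling MALA to MALTA on the event that the chain and its proposals stay in $G_h=\{h|\nabla U|\le 1\}$, and then invoking Theorem~\ref{MALTAaccuracy} under the $\mu$-integral --- has a genuine gap, and it is exactly the one you flag but do not resolve. Theorem~\ref{MALTAaccuracy} and Lemma~\ref{MALTAmomentbound} deliver a constant $C(T,E_0)$ \emph{and} a step-size threshold $h_c(E_0)$ whose dependence on $E_0$ is left entirely unquantified; your argument needs $E_0=E_0(h)\to\infty$ (a fixed $E_0$ leaves a non-vanishing tail contribution $\int_{\{U>E_0\}}\Psi\,d\mu$), and for any such choice you must verify both that $h<h_c(E_0(h))$ remains true and that $C(T,E_0(h))^2h^{3/2}$ is still $O(h^{3/2})$. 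Neither can be checked from the statements you are allowed to cite, and if $h_c(E_0)$ decays faster than, say, $e^{-cE_0}$ the logarithmic choice fails outright. The problem is compounded on the high-energy region: the ``crude bound'' $\E^{\boldsymbol{x}}\{|\boldsymbol{Z}_n|^2\}\le C_T\Psi(\boldsymbol{x})$ with $\mu(\Psi)<\infty$ does not follow from Lemma~\ref{MALTAmomentbound}, since that lemma is itself stated only for $U(\boldsymbol{x})\le E_0$ with an $E_0$-dependent constant and threshold; a direct bound using the truncated drift gives only $\E^{\boldsymbol{x}}\{|\boldsymbol{Z}_n|^2\}\lesssim |\boldsymbol{x}|^2+T^2/h^2$, which reintroduces negative powers of $h$ into the balancing act. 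So the transfer argument cannot be closed without reopening and re-quantifying the proof of Theorem~\ref{MALTAaccuracy}, which defeats its purpose. (The coupling and rare-event steps themselves are fine, modulo a small lemma you would need for $\P_\mu(\boldsymbol{X}^*_{j+1}\notin G_h)=O(h^p)$, since $\nabla U$ is controlled only by $U$, not polynomially in $|\boldsymbol{x}|$.)

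Your one-sentence fallback is, in substance, the paper's actual proof, and it is the route you should take as primary. The paper proves Theorem~\ref{MALAaccuracy} directly by a one-step error recursion $\epsilon_{k+1}\le(1+Ah)\epsilon_k+Kh^{5/2}$ for $\epsilon_k=\E_\mu\E^{\boldsymbol{x}}\{|\boldsymbol{X}_k-\boldsymbol{Y}_{t_k,0}(\boldsymbol{x})|^2\}$: expand the error at step $k+1$ around $\boldsymbol{Y}_{t_{k+1},t_k}(\boldsymbol{X}_k)$, bound the local term by the $O(h^{5/2})$ mean-squared and $O(h^2)$ mean-deviation estimates of Lemma~\ref{MALAlocalaccuracy}, bound the propagated term by Lemma~\ref{regularityofsolutions}, and crucially use the invariance identity \eqref{Xpreservesmu} to replace all starting-point-dependent factors $U(\boldsymbol{X}_k)^m$ by the finite constants $\mu(U^m)$. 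This is precisely why the equilibrium initial condition appears in the hypothesis, and it is the ordering the paper actually uses: Theorem~\ref{MALTAaccuracy} is proved afterwards as a modification of this argument (with Lemma~\ref{MALTAmomentbound} substituting for invariance), not the other way around. Had you developed the fallback, the proof would have gone through.
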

The proofs of Theorems~\ref{MALTAaccuracy} and~\ref{MALAaccuracy}  
as well as the precise  statement of Assumption~\ref{sa} on which they rely 
are presented in \S\ref{sec:OverdampedLangevin}.  The proofs build on results 
of global accuracy for numerical methods applied to SDEs with  uniformly 
Lipschitz drift\cite{MiTr2004} and with one-sided Lipschitz drift\cite{HiMaSt2002}.  
\S\ref{sec:NumericalIllustration} will illustrate these results on a simple but
representative example with a nonglobally Lipschitz drift.


\subsection{Inertial Langevin}

Next, we shall consider Langevin equations 
defined in terms of a Hamiltonian function $H \in C^{\infty}(\mathbb{R}^{2n}, \mathbb{R})$:
\begin{equation} 
d \mathbf{Y} = \mathbb{J} \nabla H(  \mathbf{Y} ) dt 
- \gamma \boldsymbol{C} \nabla H(  \mathbf{Y} ) dt 
+ \sqrt{2 \gamma \beta^{-1}} \boldsymbol{C}  d \mathbf{W}
\label{SDE2}
\end{equation}
where the following matrices have been introduced:
\[
\mathbb{J}= 
\begin{bmatrix}
0 & \mathbf{I} \\
-\mathbf{I} &  0 
\end{bmatrix},~~~
\boldsymbol{C} = 
\begin{bmatrix}
0 & 0 \\
0 & \mathbf{I} 
\end{bmatrix}  \text{.}
\]
Here $\boldsymbol{W}$ is a standard $2n$-dimensional Wiener process, 
or Brownian motion, $\beta>0$ is a parameter referred to as the inverse 
temperature, and $\gamma>0$ is referred to as the friction coefficient.  Write 
$\mathbf{Y}(t)=(\boldsymbol{Q}(t),\boldsymbol{P}(t))$ where $\boldsymbol{Q}(t)$ 
and  $\boldsymbol{P}(t)$ represent the instantaneous configuration and 
momentum of the system, respectively.   We shall assume:
\[
H(\boldsymbol{q}, \boldsymbol{p}) = 
\frac{1}{2}  \boldsymbol{p}^T \boldsymbol{M}^{-1} \boldsymbol{p} + U(\boldsymbol{q}) \text{,}
\]
where $\boldsymbol{M}$ is a symmetric positive definite mass matrix and 
$U$ is a potential energy function.   There are two main differences between
the overdamped and inertial Langevin dynamics.  First, the solution
of \eqref{SDE1} is a reversible stochastic process, whereas the solution of 
\eqref{SDE2} is not.  Second, the diffusion in \eqref{SDE2} is only applied to 
momentum degrees of freedom, whereas the diffusion in \eqref{SDE1} is 
applied to all degrees of freedom.  The consequences of these differences
to the discretization of \eqref{SDE2} and its Metropolis-adjustment will be a 
main focus of \S \ref{sec:InertialLangevin}.

Despite the degenerate diffusion in \eqref{SDE2}, 
under certain regularity conditions on $U$, the solution to this SDE is geometrically 
ergodic with respect to an invariant probability measure $\mu$ with the following 
density\cite{Ta2002}:
\begin{equation}
\pi(\boldsymbol{q}, \boldsymbol{p}) = 
Z^{-1} \exp\left( -  \beta H(\boldsymbol{q}, \boldsymbol{p}) \right) \text{,}
  \label{IM2}
\end{equation}
where $Z= 
\int_{\mathbb{R}^{2n}} 
\exp\left(-   \beta H(\boldsymbol{q}, \boldsymbol{p}) \right) 
d\boldsymbol{q} d\boldsymbol{p}$.

\paragraph{Geometric Langevin Algorithm.}

Let $N$ and $h$ be given, set $T=  N h$ and $t_k = h k $ for
$k=0,...,N$.   We shall consider an integrator for \eqref{SDE2} based on 
splitting the Langevin equations into Hamilton's equations for the Hamiltonian
$H$:
\begin{equation} \label{HamiltonsEquations}
\begin{cases} 
d \boldsymbol{Q} &= \boldsymbol{M}^{-1} \boldsymbol{P} dt,  \\
d \boldsymbol{P}  &= - \nabla U(\boldsymbol{Q}) dt, 
\end{cases}
\end{equation}
 and Ornstein-Uhlenbeck equations
\begin{equation} \label{OrnsteinUhlenbeck}
\begin{cases} 
d \boldsymbol{Q} &= 0, \\
d \boldsymbol{P}  &= - \gamma \boldsymbol{M}^{-1} \boldsymbol{P} dt 
				 + \sqrt{2 \beta^{-1} \gamma} d \boldsymbol{W}  \text{.}
\end{cases}
\end{equation}
The solution of Hamilton's equations will be approximated by a symplectic integrator; to be specific, 
the discrete Hamiltonian map of a self-adjoint variational integrator\cite{MaWe2001}.   While the 
exact flow will be used for the Ornstein-Uhlenbeck equations.  These flows will be 
composed in a Strang-type splitting to obtain a pathwise approximant to the solution 
of inertial Langevin which we will refer to as the Geometric Langevin Algorithm (GLA).

This type of splitting of inertial Langevin equations is quite natural and 
has been recently used in simulations of molecular dynamics (see \cite{VaCi2006, BuPa2007}),  
dissipative particle dynamics  (see \cite{Sh2003, SeFaEsCo2006}), and
inertial particles (see \cite{PaStZy2008}).   This paper is geared 
 towards applications in molecular dynamics where Langevin integrators 
 (including the ones cited above) have been based on generalizations of the widely used 
 St\"{o}rmer-Verlet integrator.   The St\"{o}rmer-Verlet integrator is attractive for molecular dynamics 
 because it is an explicit, symmetric, second-order accurate, variational integrator for Hamilton's equations. 
 In molecular dynamics it was popularized by Loup Verlet in 1967.   Other popular generalizations 
 of the St\"{o}rmer-Verlet integrator to Langevin equations include 
 Br\"{u}nger-Brooks-Karplus (BBK) \cite{BrBrKa1984}, 
 van Gunsteren and Berendsen (vGB) \cite{GuBe1982}, 
 and the Langevin-Impulse (LI) methods  \cite{SkIz2002}.  
 The LI method is also based on a splitting of Langevin equations, 
 but it is different from the splitting considered here.     The long-time properties
 of GLA were recently analyzed in the context of uniformly Lipschitz potential forces
 in \cite{BoOw2009}.

An example of a GLA that will be analyzed in \S \ref{sec:InertialLangevin} is 
the St\"{o}rmer-Verlet based scheme:
\begin{equation} \label{SVGLA}
\begin{cases}
& \Tilde{\boldsymbol{Q}}_{k+1} = \Tilde{\boldsymbol{Q}}_{k} 
+ h \boldsymbol{M}^{-1} e^{-\gamma \boldsymbol{M}^{-1} h/2}  \Tilde{\boldsymbol{P}}_{k}  
- \frac{h^2}{2} \boldsymbol{M}^{-1} \nabla U(  \Tilde{\boldsymbol{Q}}_{k} ) \\
& \qquad \qquad + h \sqrt{2 \beta^{-1} \gamma} 
\int_{t_k}^{t_k+h/2} \boldsymbol{M}^{-1} e^{- \gamma \boldsymbol{M}^{-1} (t_k+h/2-s) } d\boldsymbol{W}(s) \text{,} \\
& \Tilde{\boldsymbol{P}}_{k+1} =  e^{-\gamma \boldsymbol{M}^{-1} h}  \Tilde{\boldsymbol{P}}_{k}  
- \frac{h}{2} e^{- \gamma \boldsymbol{M}^{-1} h/2} \left( \nabla U( \Tilde{\boldsymbol{Q}}_k ) 
+ \nabla U( \Tilde{\boldsymbol{Q}}_{k+1} ) \right) \\
& \qquad \qquad + \sqrt{2 \beta^{-1} \gamma} 
\int_{t_k}^{t_k+h} e^{- \gamma \boldsymbol{M}^{-1} (t_k + h -s ) } d \boldsymbol{W}(s) \text{.}
\end{cases}
\end{equation}
It is straightforward to show \eqref{SVGLA} possesses a smooth probability transition function 
with respect to Lebesgue measure even though the noise is only applied to momenta in 
\eqref{OrnsteinUhlenbeck}.    For globally Lipschitz potential forces, it is 
possible to show that \eqref{SVGLA} is a first-order strongly accurate integrator for \eqref{SDE2}, 
and can be shown to be geometrically ergodic with respect to an invariant measure that is a first-order accurate 
approximation to the exact invariant measure of \eqref{SDE2} \cite{BoOw2009}.  However, if the potential force
is nonglobally Lipschitz, then this discretization is plagued with the same transient behavior 
as forward Euler-Maruyama applied to the reversible Langevin diffusion process.  As before, a 
Metropolis-Hastings method is proposed to correct the discretization error in the invariant 
measure and stochastically stabilize this discretization.

\paragraph{MAGLA.}

Let $\zeta_k \sim U(0,1)$ for $k=0,..., N-1$.  To Metropolize GLA \eqref{GLA}, 
we adopt the method of Metropolizing an inertial Langevin integrator introduced in 
\cite{ScLeStCaCa2006}.   In that paper, the authors present and test this Metropolization 
technique using a potpourri of integrators including the Ricci-Ciccotti algorithm as a 
proposal move \cite{RiCi2003}.   The method presented in Scemama et al.~involves the involution 
$\varphi: \mathbb{R}^{2n} \to \mathbb{R}^{2n}$:
\[
\varphi(\boldsymbol{q}, \boldsymbol{p}) = (\boldsymbol{q},-\boldsymbol{p})  \text{.}
\]   
This involution is introduced since the stochastic process defined by composing the solution of \eqref{SDE2} 
with $\varphi$ is reversible with respect to $\mu$.

The Metropolis-Adjusted Geometric Langevin Algorithm (MAGLA) will be based 
on the probability transition density of GLA which we will refer to as $q_h$.   An 
explicit formula for this transition density is derived in the body of the paper 
(See \eqref{GLAdensity}.). Given $h$ and $(\boldsymbol{Q}_k, \boldsymbol{P}_k)$, 
MAGLA computes a proposal move $(\boldsymbol{Q}_{k+1}^*, \boldsymbol{P}_{k+1}^*)$ 
according to a step of GLA composed with a momentum flip.   For example,
for the St\"{o}rmer-Verlet based GLA this proposal move is given explicitly by:
\begin{equation} \label{SVGLAproposal}
\begin{cases}
& \boldsymbol{Q}_{k+1}^* = \boldsymbol{Q}_{k} + 
h \boldsymbol{M}^{-1} e^{-\gamma \boldsymbol{M}^{-1} h/2}  \boldsymbol{P}_{k}  
- \frac{h^2}{2} \boldsymbol{M}^{-1} \nabla U(  \boldsymbol{Q}_{k} ) \\
& \qquad \qquad + h \sqrt{2 \beta^{-1} \gamma} 
\int_{t_k}^{t_k+h/2} \boldsymbol{M}^{-1} e^{- \gamma \boldsymbol{M}^{-1} (t_k+h/2-s) } d\boldsymbol{W}(s) \text{,} \\
& \boldsymbol{P}_{k+1}^* =  e^{-\gamma \boldsymbol{M}^{-1} h} \boldsymbol{P}_{k}
- \frac{h}{2} e^{- \gamma \boldsymbol{M}^{-1} h/2} \left( \nabla U( \boldsymbol{Q}_{k}) + \nabla U( \boldsymbol{Q}_{k+1}^* ) \right) \\
& \qquad \qquad + \sqrt{2 \beta^{-1} \gamma} 
\int_{t_k}^{t_k+h} e^{- \gamma \boldsymbol{M}^{-1} (t_k + h -s ) } d \boldsymbol{W}(s) \text{.}
\end{cases}
\end{equation}
The algorithm accepts this proposal move with probability:
\begin{equation} \label{MAGLAacceptreject}
\alpha_h((\boldsymbol{q}_0,\boldsymbol{p}_0), (\boldsymbol{q}_1,\boldsymbol{p}_1)) = 1 \wedge 
\frac{q_h ((\boldsymbol{q}_1,\boldsymbol{p}_1), (\boldsymbol{q}_0,-\boldsymbol{p}_0))  \pi(\boldsymbol{q}_1,\boldsymbol{p}_1)}
        {q_h ((\boldsymbol{q}_0,\boldsymbol{p}_0), (\boldsymbol{q}_1,-\boldsymbol{p}_1)) \pi(\boldsymbol{q}_0,\boldsymbol{p}_0)}
\end{equation}
In other words,  the MAGLA update is defined as:
\begin{align} \label{MAGLA}
& \boldsymbol{X}_{k+1} :=  ( \boldsymbol{Q}_{k+1}, \boldsymbol{P}_{k+1} ) = \nonumber \\ 
& \qquad \begin{cases}
(\boldsymbol{Q}_{k+1}^*, \boldsymbol{P}_{k+1}^*)  \qquad &  
\text{if}~~\zeta_k< \alpha_h((\boldsymbol{Q}_k,\boldsymbol{P}_k), (\boldsymbol{Q}_{k+1}^*,\boldsymbol{P}_{k+1}^*)) \\
\varphi(\boldsymbol{Q}_k, \boldsymbol{P}_k) \qquad &  \text{otherwise}
\end{cases} 
\end{align}
for $k=0,...,N-1$.  Observe, when a proposal move is rejected, the momentum is ``flipped''; and 
MAGLA involves a modified detailed balance condition \eqref{MAGLAacceptreject} as opposed
to the usual detailed balance condition used with MALA \eqref{MALAacceptreject}.  The reason
MAGLA uses a modified detailed balance is related to the nonreversibility of the solution to
\eqref{SDE2}.  In particular, the transition density of the solution to \eqref{SDE2} 
does not satisfy detailed balance, but does satisfy a modified detailed balance condition.   
It is straightforward to show that MAGLA preserves $\mu$, and hence, is not transient.  
In fact, it is often easy to classify MAGLA as an ergodic Markov chain even if the potential force
is nonglobally Lipschitz.   In the next paragraph, we quantify the strong convergence of MAGLA.

\paragraph{Main Result II: Strong Convergence of MAGLA.}

It is straightforward to show GLA provides a first-order globally accurate integrator for 
\eqref{SDE2} provided the potential force is globally Lipschitz \cite{BoOw2009}.  
As discussed MAGLA involves a momentum flip whenever a proposal move is rejected.  
This momentum flip was introduced since the stochastic process defined by 
composing the solution of \eqref{SDE2} with a momentum flip satisfies detailed balance with 
respect to $\pi$.  Despite this momentum flip, MAGLA provides a pathwise approximant to 
\eqref{SDE2} as summarized  by the following theorem.   As in Theorem~\ref{MALAaccuracy},
we will restrict the initial conditions of MAGLA to be distributed according to the equilibrium
distribution of \eqref{SDE2}.   As before this assumption implies bounds on relevant moments of 
MAGLA which follow from the fact that MAGLA preserves the measure $\mu$.
\begin{theorem}[St\"{o}rmer-Verlet-Based MAGLA Strong Accuracy from Equilibrium]
Assume \ref{sa} (C) and \ref{sa} (E) on the potential energy and \ref{sa2}  on \eqref{SDE2}.    
Then for every $T>0$,  there exists $h_c > 0$ and $C(T)>0$ such that for all
  $h<h_c$, for all $\boldsymbol{x} \in \mathbb{R}^{2n}$, and for all $t \in [0, T]$,
\[
\left( \E_{\mu}  \E^{\boldsymbol{x}} 
\left\{ \left| \boldsymbol{X}_{\lfloor t/ h \rfloor} - \boldsymbol{Y}(t) \right|^2 \right\} 
\right)^{1/2} \le C(T) h \text{.}
\]
\label{SVMAGLAaccuracy}
\end{theorem}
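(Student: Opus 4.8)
The plan is to view the St\"{o}rmer-Verlet-based MAGLA iteration \eqref{MAGLA} as a one-step integrator for \eqref{SDE2} that occasionally rejects, and then to feed it into the mean-square convergence theorem for one-step methods --- in the form of \cite{MiTr2004} under a global Lipschitz bound, localized as in \cite{HiMaSt2002} to accommodate the nonglobally Lipschitz $\nabla U$ permitted by Assumption~\ref{sa} (E). The only genuinely new ingredient is a sharp bound on the rejection probability, for which the explicit GLA transition density \eqref{GLAdensity} is needed; everything else is bookkeeping built on the one-step accuracy of GLA established in \cite{BoOw2009}.

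\emph{Moment bounds from equilibrium.} Since the MAGLA update \eqref{MAGLA} leaves $\mu$ invariant, under $\E_\mu\E^{\boldsymbol x}$ the law of $\boldsymbol X_k$ is $\mu$ for every $k$, so for each integer $\ell\ge1$
\[
\sup_{k\ge0}\ \E_\mu\E^{\boldsymbol x}\{|\boldsymbol X_k|^{2\ell}\}=\int_{\R^{2n}}|\boldsymbol y|^{2\ell}\,\mu(d\boldsymbol y)<\infty,
\]
uniformly in $h$, because $\pi$ in \eqref{IM2} is Gaussian in $\boldsymbol p$ and, under Assumption~\ref{sa} (C), confining in $\boldsymbol q$. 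By Assumption~\ref{sa2} the exact solution started from $\mu$ obeys $\sup_{t\le T}\E_\mu\E^{\boldsymbol x}\{|\boldsymbol Y(t)|^{2\ell}\}<\infty$, and a single GLA proposal step from $\boldsymbol z$ has all moments bounded by $C(1+|\boldsymbol z|^{2m})$. This is the sole use of the equilibrium initial condition; it plays the role that the geometric ergodicity of MALTA played in Theorem~\ref{MALTAaccuracy} via Lemma~\ref{MALTAmomentbound}.

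\emph{The rejection estimate (the crux).} Writing out \eqref{GLAdensity} and using that \eqref{SVGLA} is the symmetric composition of a half step of the \emph{exact} Ornstein-Uhlenbeck flow, a full St\"{o}rmer-Verlet step $\Phi$ of Hamilton's equations, and another half step of the exact Ornstein-Uhlenbeck flow, the modified detailed balance ratio in \eqref{MAGLAacceptreject} for one GLA step telescopes: the Ornstein-Uhlenbeck half-steps are reversible with respect to the Gaussian $\boldsymbol p$-marginal of $\pi$ and cancel, while $\Phi$ is symplectic (unit Jacobian) and time-reversible, $\varphi\circ\Phi\circ\varphi=\Phi^{-1}$, so the ratio equals $\exp(-\beta\,\Delta\mathcal E_k)$ with $\Delta\mathcal E_k=H(\Phi(\boldsymbol Q_k,\boldsymbol P'_k))-H(\boldsymbol Q_k,\boldsymbol P'_k)$ the energy increment of the Verlet step ($\boldsymbol P'_k$ being the momentum after the first half-step). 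As $H$ is conserved along the exact Hamiltonian flow and St\"{o}rmer-Verlet is second order, $\Delta\mathcal E_k=h^{3}\rho(\boldsymbol Q_k,\boldsymbol P'_k)+O(h^{4})$ with $\rho$ smooth of polynomial growth; localizing to $\{|\boldsymbol X_k|\le R\}$ to control $\nabla U,\nabla^2U,\dots$, and using that the Gaussian part of $\boldsymbol P'_k$ has all moments, one gets
\[
0\le 1-\alpha_h\le\beta\,(\Delta\mathcal E_k)_+,\qquad \bigl(\E\{|\Delta\mathcal E_k|^{2}\mid\boldsymbol X_k\}\bigr)^{1/2}\le C(R)\,h^{3}\ \text{ on }\{|\boldsymbol X_k|\le R\}.
\]
Thus rejection happens with probability $O(h^{3})$; the momentum flip it triggers costs $O(1)$ rather than $o(1)$, but this is precisely compensated by the $O(h^{3})$ probability --- the balance that forces the exponent $3$ in $\Delta\mathcal E_k$ and, ultimately, order $1$ here versus order $3/4$ in Theorems~\ref{MALTAaccuracy} and~\ref{MALAaccuracy}.

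\emph{Coupling, one-step errors, and Gronwall.} Drive the Gaussian integrals in \eqref{SVGLAproposal} by the same Brownian path $\boldsymbol W|_{[t_k,t_{k+1}]}$ that drives \eqref{SDE2}, with $\zeta_k$ independent of $\boldsymbol W$, and set $\mathcal F_k=\sigma(\boldsymbol W|_{[0,t_k]},\zeta_0,\dots,\zeta_{k-1})$. Let $\Psi_h$ denote one MAGLA step, $\boldsymbol e_k=\boldsymbol X_k-\boldsymbol Y(t_k)$, and $\bar{\boldsymbol X}_{k+1}=\Psi_h(\boldsymbol Y(t_k),\boldsymbol W|_{[t_k,t_{k+1}]},\zeta_k)$, so $\boldsymbol e_{k+1}=(\boldsymbol X_{k+1}-\bar{\boldsymbol X}_{k+1})+(\bar{\boldsymbol X}_{k+1}-\boldsymbol Y(t_{k+1}))$. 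The second (local-error) term splits as the GLA proposal error from $\boldsymbol Y(t_k)$ --- conditional mean $O(h^{2})$, conditional second moment $O(h^{3})$ by the one-step accuracy of GLA \cite{BoOw2009}, localized as in \cite{HiMaSt2002} --- plus the rejection correction $\mathbf 1_{\{\zeta_k\ge\alpha_h\}}(\varphi(\boldsymbol Y(t_k))-\boldsymbol X^{*}_{k+1})$, whose conditional mean and conditional second moment are both $O(h^{3})$ by the rejection estimate and Cauchy-Schwarz, all up to factors polynomial in $|\boldsymbol Y(t_k)|$. The first (stability) term equals a difference of two GLA one-step maps on $\{$both accept$\}$, with conditional $L^{2}$-Lipschitz constant $1+Ch$ in $\boldsymbol e_k$ on the localizing ball; equals $\varphi(\boldsymbol X_k)-\varphi(\boldsymbol Y(t_k))=\varphi(\boldsymbol e_k)$ on $\{$both reject$\}$, with $|\varphi(\boldsymbol e_k)|=|\boldsymbol e_k|$ exactly since $\varphi$ is a linear isometry; and on the event that the two decisions disagree --- conditional probability at most $(1-\alpha_h(\boldsymbol X_k,\cdot))+(1-\alpha_h(\boldsymbol Y(t_k),\cdot))=O(h^{3})$ up to polynomial factors --- the jump is $O(1+|\boldsymbol e_k|)$ up to polynomial factors. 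Squaring, taking $\E_\mu\E^{\boldsymbol x}$ (after the standard cross-term bookkeeping and after disposing of the localization remainder with Markov's inequality and the polynomial moment bounds of the second paragraph, choosing $R=R(h)\to\infty$ slowly) yields
\[
\E_\mu\E^{\boldsymbol x}\{|\boldsymbol e_{k+1}|^{2}\}\le(1+Ch)\,\E_\mu\E^{\boldsymbol x}\{|\boldsymbol e_k|^{2}\}+Ch^{3},
\]
and discrete Gronwall over $k\le T/h$ gives $\E_\mu\E^{\boldsymbol x}\{|\boldsymbol e_k|^{2}\}\le C(T)h^{2}$, hence the stated $O(h)$ rate at the grid times, and for general $t\in[0,T]$ by the continuous-time interpolant of the scheme. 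The main obstacle is the rejection estimate of the third paragraph --- identifying the Metropolis ratio with $e^{-\beta\Delta\mathcal E_k}$ and controlling the Verlet energy increment by polynomially growing constants uniformly over the localizing balls; the remainder is the localized mean-square convergence machinery of \cite{MiTr2004,HiMaSt2002,BoOw2009} applied to a one-step method carrying a small random perturbation.
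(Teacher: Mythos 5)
Your proposal is correct in its essentials and reproduces the paper's key quantitative mechanism: the acceptance ratio reduces to $\exp(-\beta\,\Delta E)$ with $\Delta E$ the discrete energy increment of the Verlet substep (the paper's Lemma~\ref{MAGLAacceptanceprobability}, which it proves by direct computation with the self-adjoint discrete Lagrangian rather than your reversibility/symplecticity telescoping --- same content), the $O(h^{3})$ size of $\Delta E$ and hence of the rejection probability (Lemma~\ref{MAGLAstagnationprobability}), the resulting local error balance in which the $O(1)$ momentum-flip cost is paid with $O(h^{3})$ probability (Lemma~\ref{MAGLAlocalaccuracy}), and stationarity under $\mu$ as the source of moment bounds. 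Where you genuinely diverge is in the global recursion: you propagate the accumulated error through the \emph{numerical} one-step map ($\boldsymbol e_{k+1}=(\boldsymbol X_{k+1}-\bar{\boldsymbol X}_{k+1})+(\bar{\boldsymbol X}_{k+1}-\boldsymbol Y(t_{k+1}))$), whereas the paper (see the proof of Theorem~\ref{MALAaccuracy}, to which it defers) propagates through the \emph{exact} flow and uses Assumption~\ref{sa2} precisely to get the $(1+Kh)$ mean-square Lipschitz bound and the $\boldsymbol\Delta$ estimate for the cross term. Your choice forces you to compare two coupled Metropolis steps and to handle the event that their accept/reject decisions disagree; the paper's decomposition never compares two numerical trajectories and so avoids this case analysis entirely. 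Your route can be closed, but note two places where it is more delicate than you acknowledge: (i) on the disagreement event the jump depends on the fresh noise, so a naive Cauchy--Schwarz against $P(\text{disagree})^{1/2}=O(h^{3/2})$ would destroy the rate --- you must use the pointwise bound $1-\alpha_h\le\beta(\Delta E)_{+}=O(h^{3})\cdot\mathrm{poly}(\text{state, noise})$ inside the conditional expectation, which your third paragraph does state but your Gronwall paragraph does not actually deploy; and (ii) the ``localize and let $R(h)\to\infty$'' removal risks an $h^{-\delta}$ loss, which the paper sidesteps by carrying explicit polynomial-in-$U$ constants in every local estimate and integrating them against $\mu$ at the end. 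With those two repairs your argument delivers the stated $O(h)$ rate; as written it delivers $O(h^{1-\delta})$ for every $\delta>0$.
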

Assumptions  \ref{sa} (C) and \ref{sa} (E) on the potential energy hold even if potential
force is nonglobally Lipschitz.  The assumption \ref{sa2}  on \eqref{SDE2}
ensures that the continuous process itself is globally Lipschitz.  We stress that this regularity 
of the continuous process is different from assuming $\nabla U$ is globally Lipschitz, 
and is made for convenience.

Unlike the rejections in \eqref{MALA}, a rejection in \eqref{MAGLA} leads to a momentum flip,
and hence, a loss of local accuracy.  Even though, MAGLA still approximates pathwise the 
solution to \eqref{SDE2} because the probability of these rejections as a function of 
time-step size is small.   These issues including a proof of Theorem~\ref{SVMAGLAaccuracy} and
the lemmas on which it is based can be found in \S \ref{sec:InertialLangevin}.


\newpage


\section{Numerical illustration} \label{sec:NumericalIllustration}

\begin{figure}[ht!]
\begin{center}
\includegraphics[scale=0.75,angle=0]{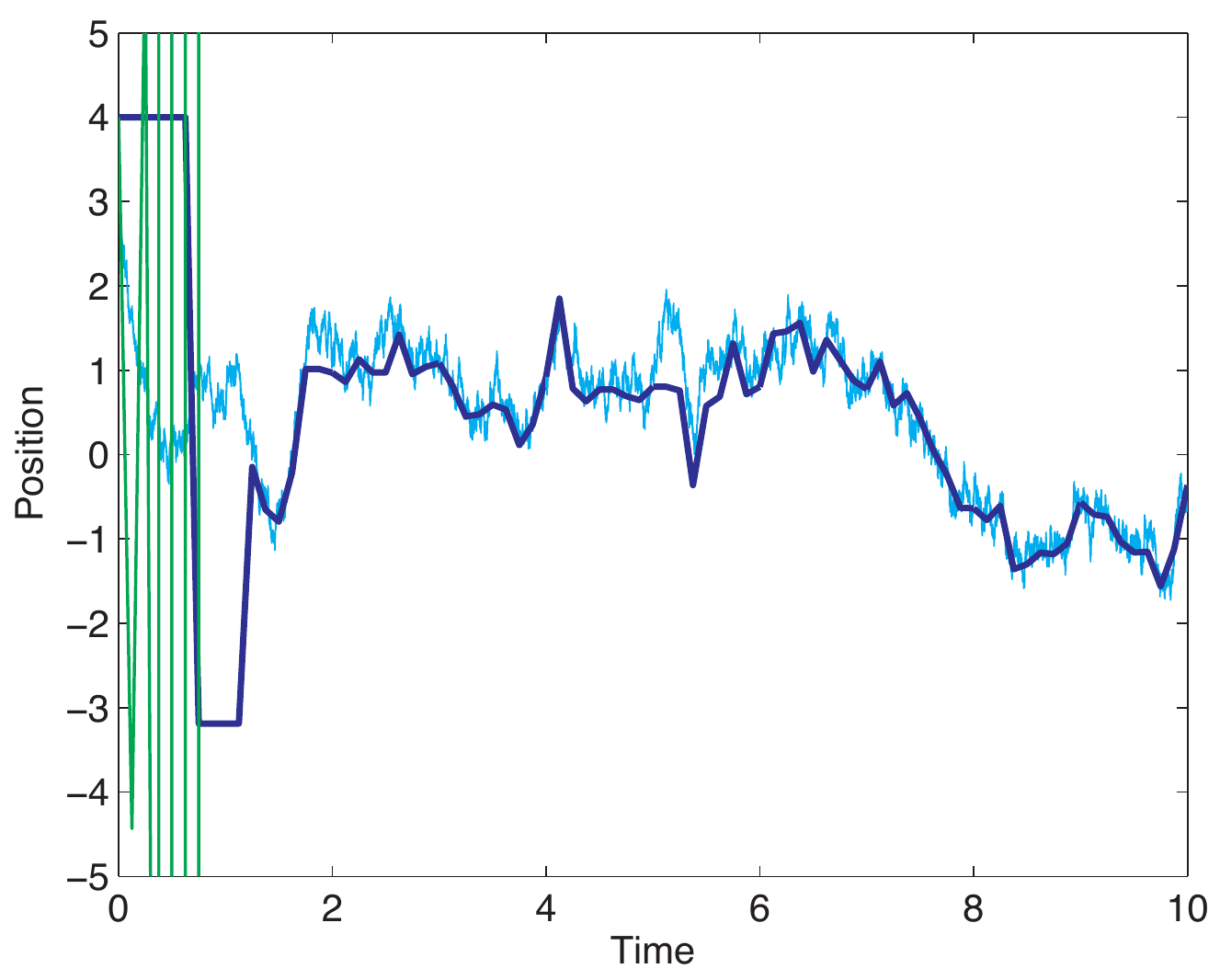}
\caption{ \small {\bf Exploding, Stagnating, and Contracting Orbits.} 
	For $h=0.3125$, $\beta=1.0$ and $x=4$, this figure illustrates a realization of the 
	solution to \eqref{cosde} (cyan), the Euler-Maruyama approximation  \eqref{coeuler} (green), 
	and MALA (blue) initiated at the edge of $B_h$ and driven by the 
	same realization of the Wiener process $W$.   Observe that Euler-Maruyama is explosive 
	while the Metropolized orbit is not.   Also observe that the position of the exact solution 
	contracts rapidly while the Metropolized orbit stagnates before contracting due to 
	proposal moves being rejected.   
} 
\label{fig:explodingstagnatingcontracting} \end{center}
\end{figure}

\begin{figure}[ht!]
\begin{center}
\includegraphics[scale=0.75,angle=0]{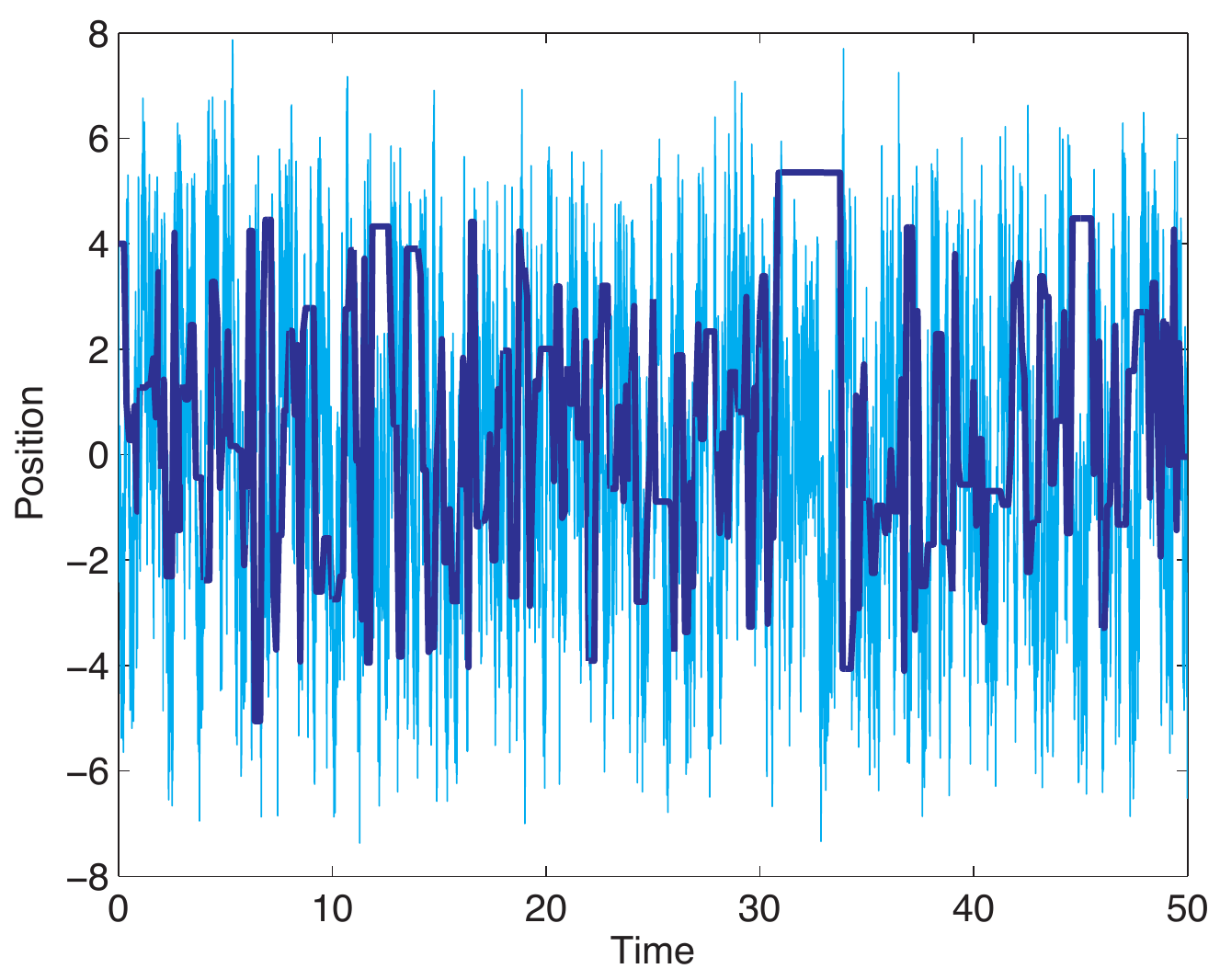}
\caption{ \small {\bf Long-Time Behavior of MALA.} 
	For $h=0.3125$, $\beta=0.01$ and $x=4.0$, a long-time realization of the solution 
	to \eqref{cosde} (cyan) and MALA (blue).  Observe that the 
	solution frequently visits higher energy values that are not reached by the 
	Metropolized integrator.   But, ergodicity requires that the Metropolized integrator 
	visit these higher energy levels and for about the same ratio of time spent by the 
	solution, since both chains sample from $\pi$.  Hence, the Metropolized integrator 
	intermittently stagnates at higher energy values as shown in the figure.  
} 
\label{fig:longtimeorbit} \end{center}
\end{figure}

\begin{figure}[ht!]
\begin{center}
\includegraphics[scale=0.75,angle=0]{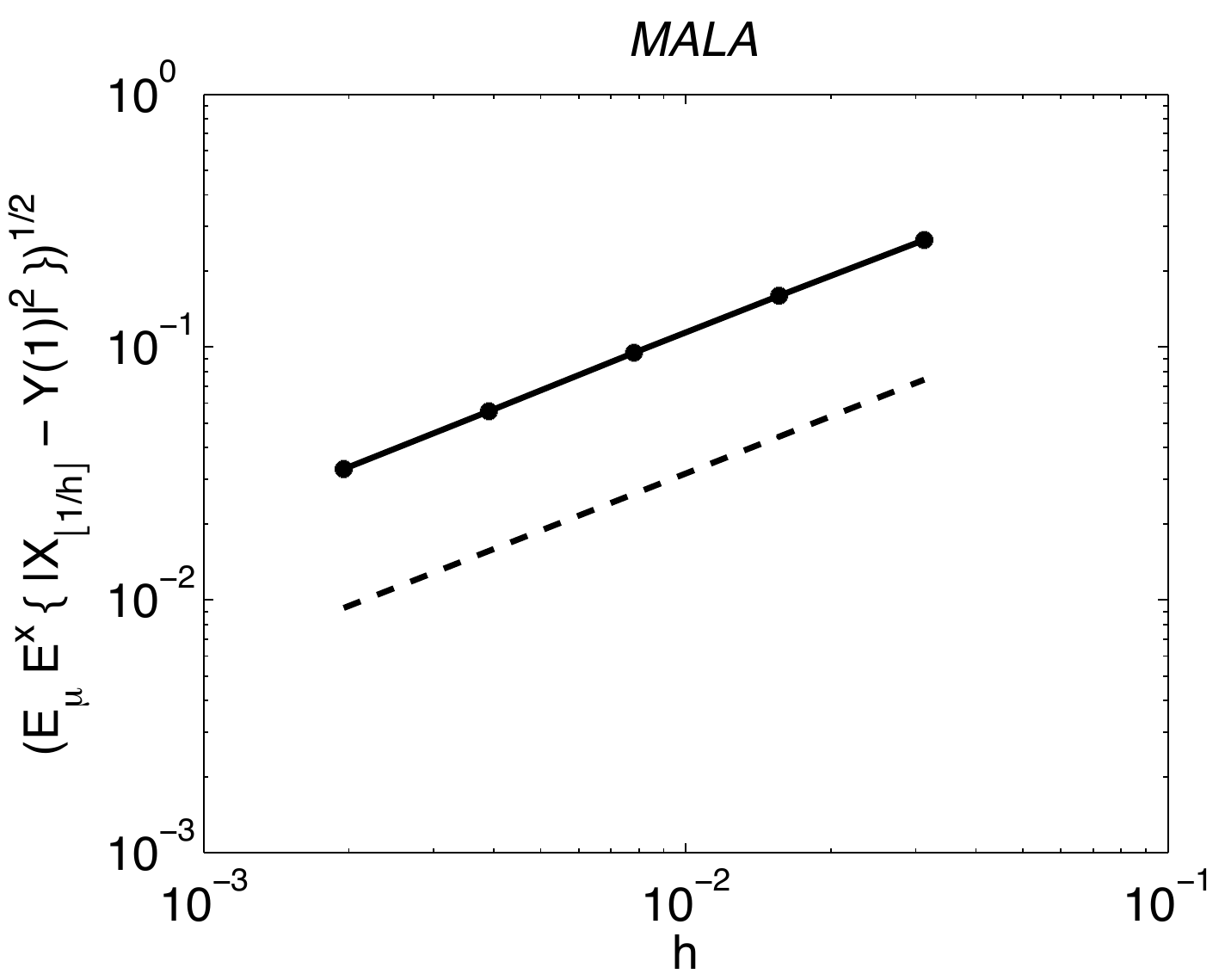}
\caption{ \small {\bf Strong Accuracy of MALA.}  
	This loglog plot illustrates that despite random rejections of integration steps and the
	nonglobally Lipschitz nature of the drift, MALA remains mean-squared, and hence strongly, 
	convergent to solutions of the SDE.   The dashed line represents a reference slope of $h^{3/4}$.  
	The solid line represents an empirical  estimate of the mean-squared order of accuracy 
	of MALA for an inverse temperature $\beta=1$,  time-span of $T=1$, and an ensemble
	of initial conditions drawn from the equilibrium distribution of the SDE using inverse transform
	sampling.  The estimated rate of convergence agrees with that reported in 
	Theorem~\ref{MALAaccuracy}.  A total of $10^6$ realizations were used to 
	obtain this estimate. 
  }
\label{fig:MALAPathwiseAccuracy} \end{center}
\end{figure}

\begin{figure}[ht!]
\begin{center}
\includegraphics[scale=0.75,angle=0]{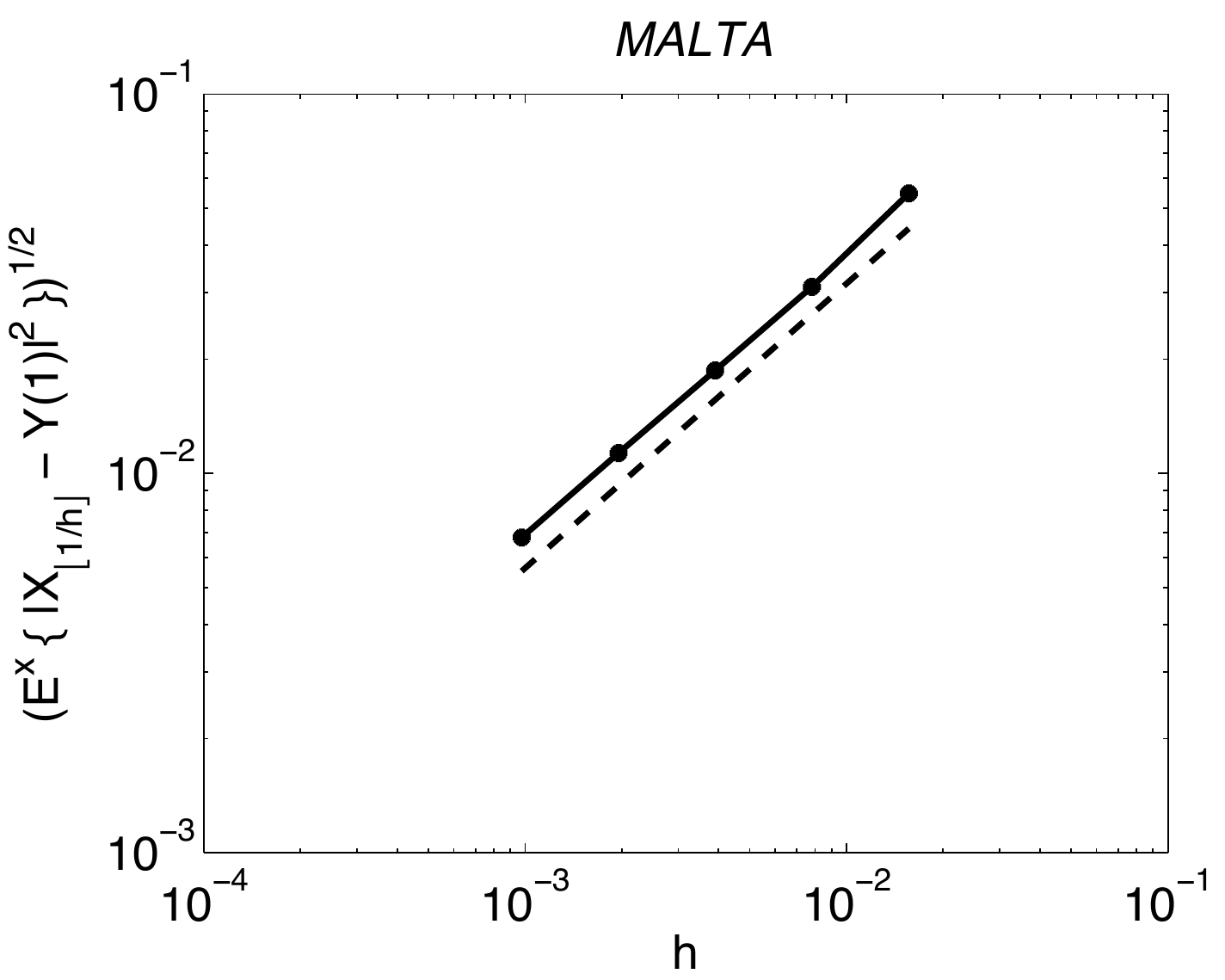}
\caption{ \small {\bf Strong Accuracy of  MALTA.}  
	This loglog plot illustrates that despite random rejections of integration steps and the
	{\em ad hoc} truncation of the drift, MALTA remains mean-squared, and hence strongly, 
	convergent to solutions of the SDE.   The dashed line represents a reference slope of $h^{3/4}$.  
	The solid line represents an empirical  estimate of the mean-squared order of accuracy 
	of MALTA for an inverse temperature $\beta=1$,  time-span of $T=1$, and initial 
	condition $x=0.1$.  The estimated rate of convergence agrees with that reported in
	Theorem~\ref{MALTAaccuracy}.  A total of $10^6$ realizations were used to 
	obtain this estimate. 
  }
\label{fig:MALTAPathwiseAccuracy} \end{center}
\end{figure}

\subsection{Overdamped Langevin}

Here we illustrate the results above on the simple example
of a particle diffusing in a quartic potential $U(x)=x^4/4$ with inverse
temperature $\beta>0$.  The overdamped dynamics of this system is given by:
\begin{equation}
d Y =  - Y^3  dt + \sqrt{2 \beta^{-1}} dW,~~Y(0) = x.
\label{cosde}
\end{equation}
This SDE admits the following unique invariant measure 
\[
\mu(dx) = Z^{-1} \exp(-\beta x^4/4) dx \text{.}
\]
Let $N$ and $h$ be given.  Set $T= N h$ and $t_k = h k $ for
$k=0,...,N$.  In terms of which consider the following forward Euler
approximation to \eqref{cosde}:
\begin{equation}
  \Tilde X_{k+1} = \Tilde X_k - h \Tilde X_k^3  
  + \sqrt{2 \beta^{-1}} (W(t_{k+1})-W(t_k)),
  \qquad \Tilde X_0 = x.
\label{coeuler}
\end{equation}
The drift in \eqref{coeuler} is destabilizing in the region:
\[
B_h = \{ x : |1- h x^2|>1 \} \text{.}
\]   
This property of Euler-Maruyama's discrete drift is the essential reason why 
Euler-Maruyama defines a transient Markov chain.  Indeed, using this property 
\eqref{emunboundedmoments} is proved in Lemma 6.3 of \cite{MaStHi2002}.  
Despite this shortcoming Euler-Maruyama can be used as candidate dynamics 
in a Metropolis-Hastings method designed to sample from $\mu$.

Figure~\ref{fig:explodingstagnatingcontracting} illustrates the
difference between a given realization of the exact solution to
\eqref{cosde}, the Euler-Maruyama integrator \eqref{coeuler}, and the
Metropolized integrator when initiated at the edge of $B_h$ and driven
by the same realization of the Wiener process $W$.   We empasize that in this figure
the time-step is held fixed and chosen to be large.  The figure shows
the Euler-Maruyama orbit explodes, the solution orbit rapidly
contracts to the origin, and the Metropolis-Hastings method initially
stagnating, but eventually tracking the solution apparently better over time.   
We remark that the ability of the Metropolized orbit to track the solution 
better over time is not a generic property of Metropolized integrators.  In this case it
is a consequence of  \eqref{cosde} satisfying a  {\em ``one force, one solution"}  principle 
(See \cite{EKhMaSi2000}.).  That is, for every realization of the Wiener process, the solution 
possesses a random attractor that the Metropolized orbit is drawn to.

Figure~\ref{fig:longtimeorbit} shows a longer-time realization of the Metropolized 
integrator and the solution.   The solution at this temperature frequently visits higher 
energy values.  The proposal moves to higher energy values are less likely to be
accepted by the Metropolized integrator.  At the same time, ergodicity implies the Metropolized integrator 
must visit these higher energy values, and as often as the exact solution along this long time-interval, 
since both chains preserve $\pi$.  Consequently, the Metropolized integrator intermittently stagnates 
as shown in the figure, and of course, loses pathwise accuracy.

This observation does not contradict Theorems~\ref{MALTAaccuracy} and~\ref{MALAaccuracy}.
Keep in mind that the time-step size for this particular orbit is held fixed and chosen to be large.  
Such stagnations at high energy become less likely to occur along orbits on finite-time intervals 
as the time-step becomes smaller.  Figures~\ref{fig:MALTAPathwiseAccuracy} and 
\ref{fig:MALAPathwiseAccuracy} confirms this pathwise convergence.  In particular, the figures 
show an $\mathcal{O}(h^{3/4})$ rate of strong convergence of the Metropolized integrator for this 
example using MALTA and an out of equilibrium initial condition, and MALA with an initial 
condition restricted to the equilibrium distribution of \eqref{cosde}.


\begin{figure}[ht!]
\begin{center}
\includegraphics[scale=0.75,angle=0]{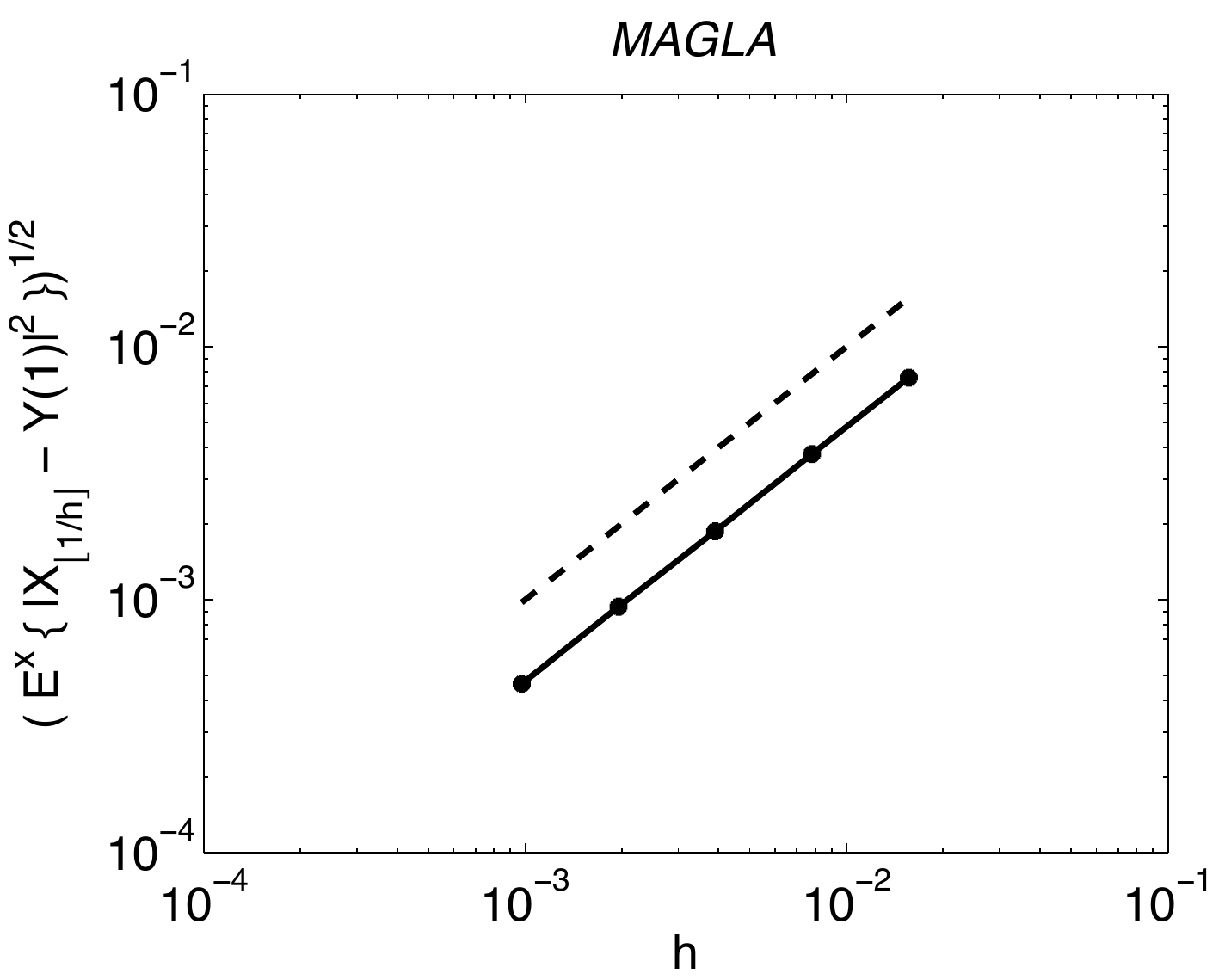}
\caption{ \small {\bf Strong Accuracy of MAGLA.}   
	This plot illustrates  that despite momentum flips which occur at every rejection of a proposal move, 
	and the nonglobally Lipschitz nature of the drift, MAGLA remains mean-squared, and hence strongly, 
	convergent to solutions of the SDE.  In this loglog plot, the dashed line represents a reference slope of $h$.
	The solid line shows a sample average of the mean-squared order of accuracy of MAGLA,  for $T=1$, 
	$\beta=\gamma=1$ and $(q_0,p_0)=(0.1,0.0)$.    The estimated rate of convergence is consistent 
	with that reported in Theorem~\ref{SVMAGLAaccuracy}.  A total of $10^5$ realizations were
	used to obtain this plot. 
}
\label{fig:MAGLAPathwiseAccuracy} \end{center}
\end{figure}

\subsection{Inertial Langevin}

Consider a simple particle with Hamiltonian given by:
\[
H(q, p) = p^2/2 + U(q)
\]
where $U(q)=q^4/4$.  The inertial Langevin dynamics of this system is given by:
\begin{equation} \label{icosde}
\begin{cases}
 d Q &=  P dt \\
 d P &= - Q^3  dt - \gamma P dt + \sqrt{2 \gamma \beta^{-1}} dW,
\end{cases}
\end{equation}
with initial condition $(Q(0), P(0)) = (q_0, p_0)$.
This SDE admits the following unique invariant measure 
\[
\mu(dq dp) = Z^{-1} \exp(-\beta H(q,p)) dq dp \text{.}
\]
Let $N$ and $h$ be given.  Set $T= N h$ and $t_k = h k $ for
$k=0,...,N$.  In terms of which consider the following St\"{o}rmer-Verlet
based GLA discretization of \eqref{icosde}:
\begin{equation} \label{cogla}
\begin{cases}
&  \Tilde Q_{k+1} = \Tilde Q_k + h e^{- \gamma h/2} \Tilde P_k - \frac{h^2}{2} \Tilde X_k^3  \\
 & \qquad \qquad  + \sqrt{2 \beta^{-1} \gamma} \int_{t_k}^{t_k+h/2} e^{-\gamma (t_k+h/2-s) } d W(s),  \\
& \Tilde P_{k+1} = e^{-\gamma h} \Tilde P_k 
- \frac{h}{2} e^{-\gamma h/2} \left( \nabla U(\Tilde Q_k) + \nabla U(\Tilde Q_{k+1} ) \right) \\
& \qquad \qquad + \sqrt{2 \beta^{-1} \gamma} \int_{t_k}^{t_k+h} e^{-\gamma (t_k+h-s) } d W(s) \text{.}
\end{cases}
\end{equation}
with initial condition $(\Tilde Q_0, \Tilde P_0) = (q_0, p_0)$.  
The inertial Langevin integrator \eqref{cogla} is plagued with the 
same transient behavior as forward Euler-Maruyama applied to the 
reversible cubic oscillator Langevin dynamics.  This occurs
at points in phase space where the underlying explicit 
St\"{o}rmer-Verlet integrator in \eqref{cogla} becomes 
linearly unstable.

A Metropolis-Hastings method can stochastically
stabilize \eqref{cogla}.  At the same time, as stated in 
Theorem~\ref{SVMAGLAaccuracy}, the method is also
pathwise accurate with respect to the solution of \eqref{icosde}
when initiated from equilibrium.  This accuracy is achieved despite the 
loss of local accuracy in momentum that occurs at every rejection.

Figure~\ref{fig:MAGLAPathwiseAccuracy} is consistent with the strong $\mathcal{O}(h)$ 
rate of convergence of MAGLA for this example.  Assumption~\ref{sa2} is hard to
check for this example, but we emphasize the assumption is not equivalent 
to the potential force being globally Lipschitz.


\newpage


\section{Overdamped Langevin}  \label{sec:OverdampedLangevin}


\subsection{Preliminaries}  \label{preliminaries}

\paragraph{Structural Assumptions.}

For a function $G \in C^{\infty}(\mathbb{R}^n, \mathbb{R})$ and an integer $r>1$, 
let $\nabla G$ and $D^r G$ be the gradient and the $rth$-derivative of $G$, respectively.       
Let $| \cdot |$ denote the Euclidean vector norm and $\| \cdot \|$ the Frobenius 
norm.  Let $L$ denote the generator of \eqref{SDE1} defined for any 
$G \in C^{2}( \mathbb{R}^n, \mathbb{R})$ as
 \begin{align}
 L G( \boldsymbol{x} ) &= - \nabla U(\boldsymbol{x}) \cdot \nabla G(\boldsymbol{x}) 
 + \beta^{-1} \operatorname{trace}[ D^2 G(\boldsymbol{x}) ] \text{.} 
 \label{generator}
 \end{align}

\begin{Assumption}
 The following structural assumptions will be invoked in this paper regarding \eqref{SDE1}:
\begin{description}
\item[A)]  
There exists a real constant $K>0$ such that
\[
U( \boldsymbol{x} ) \ge K   | \boldsymbol{x} | \text{,}
~~\forall~ \boldsymbol{x} \in \mathbb{R}^n\text{.}
\]
\item[B)] 
For every integer $\ell \ge 1$ there exist real constants $\delta_{\ell}>0$ and $M_{\ell} >0$ such that
the operator \eqref{generator} applied to the $\ell$th-power of the potential energy satisfies:
\[
L \{ U(\boldsymbol{x} )^{\ell} \} \le - \delta_{\ell} U(\boldsymbol{x} )^{\ell}  + M_{\ell} \text{,}
~~\forall~ \boldsymbol{x} \in \mathbb{R}^n\text{.}
\]
\item[C)]
There exists a real constant $K>0$ such that
\[
\left| \nabla U( \boldsymbol{x} ) - \nabla U( \boldsymbol{y} ) \right| \le 
K ( U( \boldsymbol{x} ) + U( \boldsymbol{y} ) ) | \boldsymbol{x} - \boldsymbol{y} |,
~~\forall~\boldsymbol{x}, \boldsymbol{y} \in \mathbb{R}^n \text{.}
\]
\item[D)]
There exists a real constant $K>0$ such that
\[
\left\langle - \nabla U( \boldsymbol{x} ) + \nabla U( \boldsymbol{y} ) , \boldsymbol{x} - \boldsymbol{y} \right\rangle  
\le  K | \boldsymbol{x} - \boldsymbol{y} |^2,
~~\forall~\boldsymbol{x}, \boldsymbol{y} \in \mathbb{R}^n \text{.}
\]
\item[E)]
There exists a real constant $K>0$ such that
\[
\left\| D^3 U( \boldsymbol{x} ) \right\| \vee \left\| D^2 U( \boldsymbol{x} ) \right\| \vee  \left| \nabla U( \boldsymbol{x} ) \right| 
 \le K (1 + U( \boldsymbol{x} ) ),
 ~~\forall~\boldsymbol{x} \in \mathbb{R}^n \text{.}
\]
\end{description}
\label{sa}
\end{Assumption}

We stress these structural assumptions hold even if the drift in \eqref{SDE1} is nonglobally Lipschitz.
These structural assumptions imply that the Markov process defined by the solution to
\eqref{SDE1} possesses a unique invariant probability measure explicitly given by:
\begin{equation}
\mu(d \boldsymbol{x}) :=\pi(\boldsymbol{x}) d \boldsymbol{x} 
\label{pi}
\end{equation}
where $\pi(\boldsymbol{x}) = Z^{-1} \exp(- \beta U( \boldsymbol{x} )) $
with $Z = \int_{\mathbb{R}^n} \exp(- \beta U(\boldsymbol{x} )) d\boldsymbol{x} $.   
These structural assumptions also imply the following estimates on higher moments
of solutions to \eqref{SDE1} which we state as a lemma.
\begin{lemma}[Estimates on Higher Moments of Solution]
Assume \ref{sa}.  For every integer $\ell \ge 1$ we have the following estimates 
on higher moments of the solution to \eqref{SDE1}.
\begin{description}
\item[A)]   Given $M_{\ell}>0$ and $\delta_{\ell}>0$ from assumption~\ref{sa}~(B),
\[
\E^{\boldsymbol{x}} \left\{ U( \boldsymbol{Y}(t)  )^{\ell} \right\} \le 
U(\boldsymbol{x})^{\ell} + \frac{M_{\ell}}{\delta_{\ell}},
~~\forall~\boldsymbol{x} \in \mathbb{R}^n,~~\forall ~t>0 \text{.}
\]
\item[B)]  There exists $K_{\ell} > 0$ such that
\[
\E^{\boldsymbol{x}} \left\{ \left| \boldsymbol{Y}(t) \right|^{\ell} \right\}  \le 
K_{\ell} ( 1 +  U( \boldsymbol{x} )^{\ell} ),
~~\forall~\boldsymbol{x} \in \mathbb{R}^n,~~\forall ~t>0 \text{.}
\]
\item[C)] There exists $K_{\ell}>0$ such that
\[
\E^{\boldsymbol{x}} \left\{ \left| \boldsymbol{Y}(t) - \boldsymbol{x} \right|^{2 \ell} \right\} \le 
K_{\ell}  (1+t^{\ell} U( \boldsymbol{x} )^{ 2 \ell}) t^{ \ell},
~~\forall~\boldsymbol{x} \in \mathbb{R}^n,~~\forall ~t>0  \text{.}
\]
\end{description}
\label{sdemomentbound}
\end{lemma}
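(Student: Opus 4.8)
The plan is to prove the three bounds in sequence, each building on the previous, using It\^o's formula together with the Lyapunov-type inequality in Assumption~\ref{sa}~(B). For part (A), I would apply It\^o's formula to the process $t \mapsto U(\boldsymbol{Y}(t))^{\ell}$. Since $U \in C^{\infty}$, this is justified at least up to the exit time of a large ball, and after a localization/stopping-time argument (using Assumption~\ref{sa}~(A) and standard non-explosion of \eqref{SDE1}) one obtains
\[
\E^{\boldsymbol{x}}\{ U(\boldsymbol{Y}(t))^{\ell}\} = U(\boldsymbol{x})^{\ell} + \int_0^t \E^{\boldsymbol{x}}\{ L\{U(\boldsymbol{Y}(s))^{\ell}\}\}\, ds.
\]
Applying Assumption~\ref{sa}~(B), the integrand is bounded above by $-\delta_{\ell}\,\E^{\boldsymbol{x}}\{U(\boldsymbol{Y}(s))^{\ell}\} + M_{\ell}$, so if $\phi(t) := \E^{\boldsymbol{x}}\{U(\boldsymbol{Y}(t))^{\ell}\}$ then $\phi'(t) \le -\delta_{\ell}\phi(t) + M_{\ell}$. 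Gr\"onwall's inequality (or direct integration of the linear differential inequality) gives $\phi(t) \le e^{-\delta_{\ell} t} U(\boldsymbol{x})^{\ell} + \frac{M_{\ell}}{\delta_{\ell}}(1 - e^{-\delta_{\ell} t}) \le U(\boldsymbol{x})^{\ell} + \frac{M_{\ell}}{\delta_{\ell}}$, which is the claim.

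For part (B), I would use Assumption~\ref{sa}~(A), which says $U(\boldsymbol{x}) \ge K|\boldsymbol{x}|$, hence $|\boldsymbol{Y}(t)|^{\ell} \le K^{-\ell} U(\boldsymbol{Y}(t))^{\ell}$. Taking expectations and invoking part (A) yields $\E^{\boldsymbol{x}}\{|\boldsymbol{Y}(t)|^{\ell}\} \le K^{-\ell}(U(\boldsymbol{x})^{\ell} + M_{\ell}/\delta_{\ell}) \le K_{\ell}(1 + U(\boldsymbol{x})^{\ell})$ with $K_{\ell}$ depending only on $K$, $M_{\ell}$, $\delta_{\ell}$. For part (C), I would write the integral form of \eqref{SDE1},
\[
\boldsymbol{Y}(t) - \boldsymbol{x} = -\int_0^t \nabla U(\boldsymbol{Y}(s))\, ds + \sqrt{2\beta^{-1}}\, \boldsymbol{W}(t),
\]
raise to the $2\ell$ power, and use the elementary inequality $|a+b|^{2\ell} \le 2^{2\ell-1}(|a|^{2\ell} + |b|^{2\ell})$. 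The Brownian term contributes $\E\{|\boldsymbol{W}(t)|^{2\ell}\} = c_{\ell} t^{\ell}$. For the drift term, H\"older's inequality in time gives $\left|\int_0^t \nabla U(\boldsymbol{Y}(s))\,ds\right|^{2\ell} \le t^{2\ell-1}\int_0^t |\nabla U(\boldsymbol{Y}(s))|^{2\ell}\, ds$; then Assumption~\ref{sa}~(E) bounds $|\nabla U(\boldsymbol{Y}(s))|^{2\ell} \le K^{2\ell}(1 + U(\boldsymbol{Y}(s)))^{2\ell}$, and applying part (A) (with the power $2\ell$) controls the resulting expectation by a constant times $1 + U(\boldsymbol{x})^{2\ell}$. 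Collecting terms gives $\E^{\boldsymbol{x}}\{|\boldsymbol{Y}(t) - \boldsymbol{x}|^{2\ell}\} \le K_{\ell}(t^{\ell} + t^{2\ell}(1 + U(\boldsymbol{x})^{2\ell})) \le K_{\ell}(1 + t^{\ell}U(\boldsymbol{x})^{2\ell})t^{\ell}$ after adjusting constants.

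I expect the main technical obstacle to be the rigorous justification of the It\^o's formula computation in part (A): because $\nabla U$ and the higher derivatives of $U^{\ell}$ are only polynomially controlled by $U$ itself (not globally bounded), one must localize with a sequence of stopping times $\tau_R = \inf\{t : |\boldsymbol{Y}(t)| \ge R\}$, derive the inequality $\phi_R(t) := \E^{\boldsymbol{x}}\{U(\boldsymbol{Y}(t \wedge \tau_R))^{\ell}\}$ satisfies the analogous integral bound, and then pass to the limit $R \to \infty$ using Fatou's lemma together with the a priori bound itself to guarantee $\tau_R \to \infty$ a.s. (i.e., non-explosion), which in turn follows from the uniform-in-$R$ bound on $\phi_R$. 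This bootstrapping is standard but needs to be stated carefully; everything downstream (parts (B) and (C)) is then routine.
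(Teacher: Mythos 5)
Your proposal matches the paper's proof in all essentials: part (A) via It\^o's formula and Assumption~\ref{sa}~(B) yielding the differential inequality $\phi'(t) \le -\delta_{\ell}\phi(t) + M_{\ell}$ and hence the exponential bound, part (B) via Assumption~\ref{sa}~(A), and part (C) by splitting the integral form of \eqref{SDE1} into drift and Brownian contributions, bounding the drift term with H\"older/Cauchy--Schwarz, Assumption~\ref{sa}~(E) and part (A), and the noise term with Gaussian moments. The only difference is that you spell out the localization and non-explosion argument justifying It\^o's formula, which the paper leaves implicit.
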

The proof of this lemma is presented in \S\ref{sec:prooflemma}.

The structural assumptions on \eqref{SDE1} also imply a Lipschitz condition 
on its solution which we state as another lemma.
\begin{lemma}[Regularity of Solutions]
Assume \ref{sa}. For $s \le t$,  let $\boldsymbol{Y}_{t,s}( \boldsymbol{x})$ denote 
the evolution operator of the solution to \eqref{SDE1}: 
with $\boldsymbol{Y}_{s,s}(\boldsymbol{x}) = \boldsymbol{x}$ and for  $r \le s \le t$ 
recall the Chapman-Kolmogorov identity 
$\boldsymbol{Y}_{t,s} \circ \boldsymbol{Y}_{s,r}(\boldsymbol{x}) = \boldsymbol{Y}_{t,r}(\boldsymbol{x})$.   
Set
\[
\boldsymbol{\Delta} =  
\boldsymbol{Y}_{s+h,s}(\boldsymbol{x}) - \boldsymbol{Y}_{s+h,s}(\boldsymbol{y}) 
- ( \boldsymbol{x} -\boldsymbol{y} ) \text{,}
\]
For all $K>0$ there exists $h_c>0$, such that for all positive $h<h_c$, for all 
$\boldsymbol{x}, \boldsymbol{y} \in \mathbb{R}^n$, and for all $s\ge0$,
\begin{description}
\item[A)]  
\[ 
\E \{ | \boldsymbol{Y}_{s+h,s}(\boldsymbol{x}) - \boldsymbol{Y}_{s+h,s}(\boldsymbol{y})  |^2 \} 
\le | \boldsymbol{x} - \boldsymbol{y} |^2  ( 1+  K h) \text{;}
\]
\item[B)] 
\[ 
\E \{  | \boldsymbol{\Delta}  |^2 \} 
\le   K h^2  ( 1 + U(\boldsymbol{x})^2 + U(\boldsymbol{y})^2 )  | \boldsymbol{x} - \boldsymbol{y} | \text{.}
\]
\end{description}
\label{regularityofsolutions}
\end{lemma}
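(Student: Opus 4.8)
The plan is to treat the two parts as perturbation estimates around the identity map. For part (A), I would write $\boldsymbol{D}(s+h) := \boldsymbol{Y}_{s+h,s}(\boldsymbol{x}) - \boldsymbol{Y}_{s+h,s}(\boldsymbol{y})$ and note that, since the diffusion coefficient in \eqref{SDE1} is constant, the difference of the two solutions satisfies the \emph{random} ODE $\frac{d}{dt}\boldsymbol{D}(t) = -(\nabla U(\boldsymbol{Y}_{t,s}(\boldsymbol{x})) - \nabla U(\boldsymbol{Y}_{t,s}(\boldsymbol{y})))$ with $\boldsymbol{D}(s) = \boldsymbol{x}-\boldsymbol{y}$ — there is no stochastic integral term. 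Then $\frac{d}{dt}|\boldsymbol{D}(t)|^2 = 2\langle -\nabla U(\boldsymbol{Y}_{t,s}(\boldsymbol{x})) + \nabla U(\boldsymbol{Y}_{t,s}(\boldsymbol{y})), \boldsymbol{D}(t)\rangle \le 2K|\boldsymbol{D}(t)|^2$ by the one-sided Lipschitz bound Assumption~\ref{sa}~(D). Gr\"onwall gives $|\boldsymbol{D}(s+h)|^2 \le |\boldsymbol{x}-\boldsymbol{y}|^2 e^{2Kh}$, and for $h$ small enough ($h_c$ depending only on $K$) one has $e^{2Kh} \le 1 + (2K+1)h$, which after renaming the constant is the claimed bound. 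This part is essentially immediate.

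For part (B), the quantity $\boldsymbol{\Delta}$ measures the second-order (in $h$) departure of the flow-difference from a pure translation. Using the same observation that $\boldsymbol{D}$ solves a random ODE, I would write
\[
\boldsymbol{\Delta} = \boldsymbol{D}(s+h) - \boldsymbol{D}(s) = -\int_s^{s+h}\bigl(\nabla U(\boldsymbol{Y}_{t,s}(\boldsymbol{x})) - \nabla U(\boldsymbol{Y}_{t,s}(\boldsymbol{y}))\bigr)\,dt \text{.}
\]
Then apply Cauchy--Schwarz in $t$ to get $|\boldsymbol{\Delta}|^2 \le h\int_s^{s+h} |\nabla U(\boldsymbol{Y}_{t,s}(\boldsymbol{x})) - \nabla U(\boldsymbol{Y}_{t,s}(\boldsymbol{y}))|^2\,dt$. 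Now invoke Assumption~\ref{sa}~(C), the ``one force'' Lipschitz estimate controlled by the potential, to bound the integrand by $K^2(U(\boldsymbol{Y}_{t,s}(\boldsymbol{x})) + U(\boldsymbol{Y}_{t,s}(\boldsymbol{y})))^2 |\boldsymbol{D}(t)|^2$. Take expectations, use the elementary inequality $(a+b)^2 \le 2a^2+2b^2$, and on each term apply Cauchy--Schwarz again (this time in $\omega$) to split $\E\{U(\boldsymbol{Y}_{t,s}(\boldsymbol{x}))^2|\boldsymbol{D}(t)|^2\}$ into $(\E\{U(\boldsymbol{Y}_{t,s}(\boldsymbol{x}))^4\})^{1/2}(\E\{|\boldsymbol{D}(t)|^4\})^{1/2}$.

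The two expectations are then handled by results already available: $\E\{U(\boldsymbol{Y}_{t,s}(\boldsymbol{x}))^4\}$ is bounded by $U(\boldsymbol{x})^4 + M_4/\delta_4$ via Lemma~\ref{sdemomentbound}~(A) (stationarity in $s$ lets us reduce $\boldsymbol{Y}_{t,s}$ to $\boldsymbol{Y}(t-s)$ started at $\boldsymbol{x}$), and $\E\{|\boldsymbol{D}(t)|^4\}$ is bounded by $C|\boldsymbol{x}-\boldsymbol{y}|^4$ for $t\in[s,s+h]$ by the same Gr\"onwall argument as in part (A) applied to $|\boldsymbol{D}(t)|^4$ using Assumption~\ref{sa}~(D) (note $\frac{d}{dt}|\boldsymbol{D}|^4 = 2|\boldsymbol{D}|^2\frac{d}{dt}|\boldsymbol{D}|^2 \le 4K|\boldsymbol{D}|^4$). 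Collecting terms yields $\E\{|\boldsymbol{\Delta}|^2\} \le h \cdot h \cdot C(1+U(\boldsymbol{x})^2+U(\boldsymbol{y})^2)|\boldsymbol{x}-\boldsymbol{y}|^2$, i.e. an $h^2(1+U(\boldsymbol{x})^2+U(\boldsymbol{y})^2)|\boldsymbol{x}-\boldsymbol{y}|^2$ bound. The main obstacle I anticipate is a bookkeeping one: the statement of the lemma carries the factor $|\boldsymbol{x}-\boldsymbol{y}|$ (to the first power) rather than $|\boldsymbol{x}-\boldsymbol{y}|^2$, so one must not be wasteful — the trick is to use only \emph{one} factor of the crude bound $|\boldsymbol{D}(t)| \le C|\boldsymbol{x}-\boldsymbol{y}|$ (from part (A)) to reduce $|\boldsymbol{D}(t)|^2$ to $C|\boldsymbol{x}-\boldsymbol{y}|\cdot|\boldsymbol{D}(t)|$, then carry the remaining factor $|\boldsymbol{D}(t)|$ through the Cauchy--Schwarz steps, bounding $\E\{U^2|\boldsymbol{D}(t)|^2\}^{1/2}\le (\E\{U^4\})^{1/4}(\E\{|\boldsymbol{D}(t)|^4\})^{1/4}$ and controlling $(\E\{|\boldsymbol{D}(t)|^4\})^{1/4}$ again by $C|\boldsymbol{x}-\boldsymbol{y}|$, so that in total the $|\boldsymbol{x}-\boldsymbol{y}|$-dependence is linear as stated. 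One has to check the exact powers line up; everything else is a routine application of Gr\"onwall, Cauchy--Schwarz, and the moment bounds of Lemma~\ref{sdemomentbound}.
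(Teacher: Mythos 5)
Your part (A) is exactly the paper's argument: the noise cancels in the difference of the two solutions, so $\boldsymbol{D}(t) := \boldsymbol{Y}_{t,s}(\boldsymbol{x})-\boldsymbol{Y}_{t,s}(\boldsymbol{y})$ obeys a pathwise ODE, Assumption~\ref{sa}~(D) gives the one-sided differential inequality, and Gr\"onwall yields $|\boldsymbol{D}(s+h)|^2 \le |\boldsymbol{x}-\boldsymbol{y}|^2 e^{2Kh}$. Nothing to add there.

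In part (B) your setup (integral representation of $\boldsymbol{\Delta}$, Cauchy--Schwarz in $t$, Assumption~\ref{sa}~(C), Cauchy--Schwarz in $\omega$ against the moment bounds of Lemma~\ref{sdemomentbound}) matches the paper, but the step you yourself flag as the ``main obstacle'' is genuinely broken. Your baseline computation produces $\E\{|\boldsymbol{\Delta}|^2\} \le C h^2 (1+U(\boldsymbol{x})^2+U(\boldsymbol{y})^2)|\boldsymbol{x}-\boldsymbol{y}|^2$, and the ``trick'' you propose to downgrade $|\boldsymbol{x}-\boldsymbol{y}|^2$ to $|\boldsymbol{x}-\boldsymbol{y}|$ does not work: after replacing $|\boldsymbol{D}(t)|^2$ by $C|\boldsymbol{x}-\boldsymbol{y}|\,|\boldsymbol{D}(t)|$, the leftover factor $|\boldsymbol{D}(t)|$ must still be estimated, and bounding it by $(\E\{|\boldsymbol{D}(t)|^2\})^{1/2}\le C|\boldsymbol{x}-\boldsymbol{y}|$ (or the fourth-root version) simply reinstates the second power of the separation. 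The quadratic bound is strictly weaker than the stated one when $|\boldsymbol{x}-\boldsymbol{y}|>1$, and the linear dependence is what the proof of Theorem~\ref{MALAaccuracy} actually consumes (it pairs the surviving factor $|\boldsymbol{X}_k-\boldsymbol{Y}(t_k)|$ with $\epsilon_k^{1/2}$ via one more Cauchy--Schwarz). The idea you are missing is the paper's use of Assumption~\ref{sa}~(A): since $U(\boldsymbol{z}) \ge K|\boldsymbol{z}|$, one factor of the separation is absorbed into the potential via $|\boldsymbol{D}(t)| \le |\boldsymbol{Y}_{t,s}(\boldsymbol{x})|+|\boldsymbol{Y}_{t,s}(\boldsymbol{y})| \le K^{-1}\bigl(U(\boldsymbol{Y}_{t,s}(\boldsymbol{x}))+U(\boldsymbol{Y}_{t,s}(\boldsymbol{y}))\bigr)$, whose moments Lemma~\ref{sdemomentbound} controls independently of $|\boldsymbol{x}-\boldsymbol{y}|$; only a single factor of $|\boldsymbol{D}(t)|$ then survives, and part (A) applied to it delivers exactly one power of $|\boldsymbol{x}-\boldsymbol{y}|$. (Carrying this out with the honest square of Assumption~\ref{sa}~(C), as you correctly wrote it, raises the prefactor to $U^3$ rather than the stated $U^2$; that discrepancy is already latent in the paper's own write-up and is harmless downstream, since all moments of $U$ under $\mu$ are finite.)
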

The proof of this lemma is also given below in section~\ref{sec:prooflemma}.

\paragraph{Explicit Integrator for the SDE.}

Recall that the transition kernel for ULA (cf.~\eqref{ULA}) has a density
$q_h(\boldsymbol{x}, \boldsymbol{y})$ with respect to the Lebesgue
measure given by
\begin{equation}
q_h(\boldsymbol{x}, \boldsymbol{y}) = (4 \pi \beta^{-1} h)^{-n/2} 
\exp\left(  -  \frac{\left| \boldsymbol{y} - \boldsymbol{x} 
+ h  \nabla U(\boldsymbol{x})  \right|^2}{ 4 \beta^{-1} h} \right) \text{.}
\label{ULAqh}
\end{equation}
It is derived by a simple change of variables relating the probability
density of the Euler-Maruyama update to the probability density of a
multivariate Gaussian with zero mean and variance $2 \beta^{-1} h$.
ULA is irreducible with respect to the Lebesgue
measure because its transition density is smooth and strictly
positive.  However, as mentioned in the introduction, in Langevin
diffusions with nonglobally Lipschitz drift, ULA
is transient, i.e., stochastically unstable.  Nevertheless under the
assumptions on the drift, it is straightforward to obtain the
following single-step accuracy for the forward Euler-Maruyama scheme.
\begin{lemma}[Local Strong Accuracy of Euler-Maruyama]
Assume \ref{sa}.  For all $K>0$ there exists a $h_c$ such that for all 
positive $h<h_c$ and for all $\boldsymbol{x} \in \mathbb{R}^n$
\begin{description}
\item[A)] the local mean-squared error of \eqref{ULA} satisfies
\[
\E^{\boldsymbol{x}} \{ | \Tilde{\boldsymbol{X}}_1 -
    \boldsymbol{Y}(h) |^2 \} \le K (1 + U(
\boldsymbol{x})^4 ) h^{3} \text{;}
\]
\item[B)] the local mean deviation of \eqref{ULA} satisfies
\[
| \E^{\boldsymbol{x}} \{  \Tilde{\boldsymbol{X}}_1 -  \boldsymbol{Y}(h)   \} | 
\le K  (1 + U( \boldsymbol{x} )^2 ) h^2 \text{.}
\]
\end{description}
\label{ULAlocalaccuracy}
\end{lemma}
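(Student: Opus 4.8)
The plan is to compare the single Euler--Maruyama step $\Tilde{\boldsymbol{X}}_1 = \boldsymbol{x} - h\nabla U(\boldsymbol{x}) + \sqrt{2\beta^{-1}}\,\boldsymbol{W}(h)$ with the exact solution $\boldsymbol{Y}(h)$ issued from $\boldsymbol{x}$, both driven by the same Wiener path. Writing the exact solution in integral form,
\[
\boldsymbol{Y}(h) = \boldsymbol{x} - \int_0^h \nabla U(\boldsymbol{Y}(s))\,ds + \sqrt{2\beta^{-1}}\,\boldsymbol{W}(h),
\]
the noise terms cancel exactly, so
\[
\Tilde{\boldsymbol{X}}_1 - \boldsymbol{Y}(h) = \int_0^h \bigl( \nabla U(\boldsymbol{Y}(s)) - \nabla U(\boldsymbol{x}) \bigr)\,ds.
\]
For part (A) I would take the square and use Cauchy--Schwarz (or Jensen) in the time integral to get $|\Tilde{\boldsymbol{X}}_1 - \boldsymbol{Y}(h)|^2 \le h \int_0^h |\nabla U(\boldsymbol{Y}(s)) - \nabla U(\boldsymbol{x})|^2\,ds$, then take $\E^{\boldsymbol{x}}$. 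Apply Assumption~\ref{sa}~(C) to bound $|\nabla U(\boldsymbol{Y}(s)) - \nabla U(\boldsymbol{x})| \le K(U(\boldsymbol{Y}(s)) + U(\boldsymbol{x}))|\boldsymbol{Y}(s) - \boldsymbol{x}|$; squaring and applying Cauchy--Schwarz in expectation splits this into a moment of $(U(\boldsymbol{Y}(s)) + U(\boldsymbol{x}))^4$ controlled by Lemma~\ref{sdemomentbound}~(A) (giving a bound of order $1 + U(\boldsymbol{x})^4$) times $\E^{\boldsymbol{x}}\{|\boldsymbol{Y}(s) - \boldsymbol{x}|^4\}$ controlled by Lemma~\ref{sdemomentbound}~(C) with $\ell = 2$ (giving order $s^2(1 + s^2 U(\boldsymbol{x})^4)$). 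Integrating $ds$ over $[0,h]$ and multiplying by the prefactor $h$ yields, after absorbing powers of $h$ into the constant for $h < h_c$, a bound of the form $K(1 + U(\boldsymbol{x})^4)h^3$.

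For part (B), the mean deviation, I would take $\E^{\boldsymbol{x}}$ directly of the identity above and exchange expectation with the time integral:
\[
\E^{\boldsymbol{x}}\{\Tilde{\boldsymbol{X}}_1 - \boldsymbol{Y}(h)\} = \int_0^h \E^{\boldsymbol{x}}\bigl\{ \nabla U(\boldsymbol{Y}(s)) - \nabla U(\boldsymbol{x}) \bigr\}\,ds.
\]
Taking norms, bounding the integrand by Jensen and then by Assumption~\ref{sa}~(C) as before, and using Cauchy--Schwarz in expectation against $\E^{\boldsymbol{x}}\{|\boldsymbol{Y}(s) - \boldsymbol{x}|^2\}$ (Lemma~\ref{sdemomentbound}~(C) with $\ell = 1$, of order $s(1 + s U(\boldsymbol{x})^2)$) and $\E^{\boldsymbol{x}}\{(U(\boldsymbol{Y}(s)) + U(\boldsymbol{x}))^2\}$ (Lemma~\ref{sdemomentbound}~(A), of order $1 + U(\boldsymbol{x})^2$), the integrand is of order $s^{1/2}(1 + U(\boldsymbol{x})^2)$. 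Integrating over $[0,h]$ gives order $h^{3/2}(1 + U(\boldsymbol{x})^2)$, which is $\le K(1 + U(\boldsymbol{x})^2)h^2$ once $h < h_c$ — in fact one can do slightly better, but the stated $h^2$ bound suffices and follows comfortably.

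The main obstacle is purely bookkeeping: one must keep careful track of which power of $U(\boldsymbol{x})$ appears at each application of Cauchy--Schwarz, since the nonglobally Lipschitz estimate in Assumption~\ref{sa}~(C) forces the appearance of $U$-dependent factors that must match the $U(\boldsymbol{x})^4$ (resp.\ $U(\boldsymbol{x})^2$) in the statement exactly, and one must use the correct value of $\ell$ in Lemma~\ref{sdemomentbound}~(C) to extract the right power of $h$ (namely $h^3$ in (A), $h^2$ in (B)) while having enough spare powers of $h$ (from the $h < h_c$ restriction) to discard the lower-order-in-$U$ junk terms. There is no genuine analytic difficulty since the noise cancels and no stochastic-integral estimate (It\^o isometry, BDG) is needed beyond the moment bounds already granted by Lemma~\ref{sdemomentbound}.
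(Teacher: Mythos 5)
Part (A) of your plan is essentially the paper's proof: the noise cancels, Cauchy--Schwarz in the time integral, Assumption~\ref{sa}~(C), a second Cauchy--Schwarz in expectation, and the moment bounds of Lemma~\ref{sdemomentbound}. That part is fine.

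Part (B) has a genuine gap. After pulling the expectation inside the time integral and estimating the integrand by
$\E^{\boldsymbol{x}}\{(U(\boldsymbol{Y}(s))+U(\boldsymbol{x}))^2\}^{1/2}\,\E^{\boldsymbol{x}}\{|\boldsymbol{Y}(s)-\boldsymbol{x}|^2\}^{1/2}$,
you correctly arrive at an integrand of order $s^{1/2}(1+U(\boldsymbol{x})^2)$ and hence a bound of order $h^{3/2}(1+U(\boldsymbol{x})^2)$. But $h^{3/2}$ is \emph{larger} than $h^2$ for $h<1$, so this does not imply the stated $K(1+U(\boldsymbol{x})^2)h^2$ bound --- the inequality you invoke at the end goes the wrong way. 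The $\sqrt{s}$ loss is intrinsic to your route: once you put the norm inside the expectation, the Brownian fluctuation $|\boldsymbol{Y}(s)-\boldsymbol{x}|\sim\sqrt{s}$ is unavoidable. The whole point of the mean-deviation estimate is that the expectation kills the martingale part, and to see that you must keep the expectation \emph{outside} the norm and expand $\nabla U(\boldsymbol{Y}(s))$ by the It\^o--Taylor (Dynkin) formula: the paper writes
\[
\E^{\boldsymbol{x}}\left\{ \Tilde{\boldsymbol{X}}_1 - \boldsymbol{Y}(h)\right\}
= \E^{\boldsymbol{x}}\left\{ \int_0^h\!\!\int_0^s L\left\{\nabla U(\boldsymbol{Y}(r))\right\}\,dr\,ds\right\},
\]
so that the inner increment is $O(s)$ rather than $O(\sqrt{s})$, and then bounds $|L\{\nabla U\}|$ via Assumption~\ref{sa}~(E) and Lemma~\ref{sdemomentbound}. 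The double time integral then delivers the $h^2$. Also note that the $h^2$ rate is not cosmetic: it is exactly what is needed in the cross-term estimate of the global error recursion in Theorem~\ref{MALAaccuracy}, so settling for $h^{3/2}$ would degrade the final strong order.
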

The above lemmas are proven in section~\ref{sec:prooflemma}.


\subsection{MALA and its Properties}

\paragraph{Ergodicity.}

MALA randomly rejects integration steps produced by Euler-Maruyama 
with a probability designed to ensure the composite chain preserves 
the equilibrium measure of \eqref{SDE1}.  Recall that this probability is given by:
\begin{equation}
\alpha_h(\boldsymbol{x}, \boldsymbol{y}) = 
1 \wedge \frac{q_h(\boldsymbol{y},\boldsymbol{x}) \pi(\boldsymbol{y}) }
		       {q_h(\boldsymbol{x},\boldsymbol{y}) \pi(\boldsymbol{x}) }  \text{.}
\label{acceptanceprobability}
\end{equation}
The off-diagonal transition probability kernel of the composite chain has density:
\[
p_h(\boldsymbol{x},\boldsymbol{y}) = 
q_h(\boldsymbol{x},\boldsymbol{y})  \alpha_h(\boldsymbol{x},\boldsymbol{y}) \text{.}
\]
The probability of remaining at the same point, or {\em stagnation probability}, 
is given by
\[
r_h(\boldsymbol{x}) = 
1 - \int_{\mathbb{R}^n} p_h(\boldsymbol{x},\boldsymbol{z}) d\boldsymbol{z}  \text{.}
\]
In sum, the Metropolis-Hastings transition kernel is
\begin{equation}
P_h(\boldsymbol{x},d \boldsymbol{y}) = 
p_h(\boldsymbol{x},\boldsymbol{y}) d \boldsymbol{y} + 
r_h(\boldsymbol{x}) \delta_{\boldsymbol{x}}(d\boldsymbol{y}) \text{.}
\label{mhkernel}
\end{equation}
where $\delta( \cdot )$ denotes the Dirac-delta measure in $\mathbb{R}^n$. 
It is straightforward to show that this $P_h$ is a well-defined probability 
transition kernel.  Let $\mathcal{B}(\mathbb{R}^n)$ denote the smallest $\sigma$-algebra 
containing all the open subsets of $\mathbb{R}^n$. For all $\boldsymbol{x} \in \mathbb{R}^n$ 
and $A \in \mathcal{B}(\mathbb{R}^n)$, the $k$-step transition probability kernel 
is iteratively defined as
\[
P_h^{k}( \boldsymbol{x}, A) = 
\int_{\mathbb{R}^n} P_h( \boldsymbol{x}, d \boldsymbol{y} ) P_h^{k-1}(\boldsymbol{y},A),~~~(k\ge1),
\]
with $P^0(\boldsymbol{x}, A)$ equal to the characteristic function of
the subset $A$ of $\mathbb{R}^n$, i.e., $ P^0(\boldsymbol{x}, A)=
\chi_A(\boldsymbol{x})$.

\begin{theorem}[MALA Ergodicity]
Under assumption~\ref{sa} on the potential energy, the $k$-step transition probability of 
MALA converges to $\mu$ in the following sense
\[
\sup_{A \in \mathcal{B}(\mathbb{R}^n)} | P_h^k(\boldsymbol{x}, A) - \mu(A) | \to 0,
~~\text{as}~k \to \infty,~~\forall~\boldsymbol{x} \in \mathbb{R}^n \text{.}
\]
\end{theorem}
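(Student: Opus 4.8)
The plan is to verify the hypotheses of the standard ergodic theorem for Harris chains (Meyn--Tweedie): a Markov kernel that is $\psi$-irreducible, aperiodic, and positive Harris recurrent converges in total variation to its invariant probability measure from \emph{every} initial state. So it suffices to check four things for $P_h$: (a) $\mu$ is invariant; (b) $P_h$ is $\mu$-irreducible; (c) $P_h$ is aperiodic; and (d) $P_h$ is positive Harris recurrent.

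Property (a) is the defining feature of the Metropolis--Hastings construction: using the elementary identity $a\,(1\wedge b/a)=b\,(1\wedge a/b)$ one gets the detailed-balance relation $\pi(\boldsymbol{x})\,p_h(\boldsymbol{x},\boldsymbol{y})=\pi(\boldsymbol{y})\,p_h(\boldsymbol{y},\boldsymbol{x})$ for the off-diagonal density, and combining this with the bookkeeping identity $\int p_h(\boldsymbol{x},\boldsymbol{z})\,d\boldsymbol{z}+r_h(\boldsymbol{x})=1$ yields $\mu P_h=\mu$. For (b) and (c), note that $q_h(\boldsymbol{x},\boldsymbol{y})$ is a strictly positive, jointly continuous Gaussian density and $\alpha_h(\boldsymbol{x},\boldsymbol{y})\in(0,1]$, so $p_h(\boldsymbol{x},\boldsymbol{y})=q_h(\boldsymbol{x},\boldsymbol{y})\,\alpha_h(\boldsymbol{x},\boldsymbol{y})$ is strictly positive and continuous on $\mathbb{R}^n\times\mathbb{R}^n$. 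Hence $P_h(\boldsymbol{x},A)\ge\int_A p_h(\boldsymbol{x},\boldsymbol{y})\,d\boldsymbol{y}>0$ for every $A$ of positive Lebesgue measure, and since $\pi>0$ makes $\mu$ and Lebesgue measure mutually absolutely continuous, $P_h$ is $\mu$-irreducible with maximal irreducibility measure $\sim\mu$. Moreover, for any compact $C$ and any closed ball $D$ of positive radius, $\delta:=\inf_{\boldsymbol{x}\in C,\,\boldsymbol{y}\in D}p_h(\boldsymbol{x},\boldsymbol{y})>0$ by continuity, so $P_h(\boldsymbol{x},\cdot)\ge\delta\,\mathrm{Leb}(\cdot\cap D)$ uniformly over $\boldsymbol{x}\in C$; thus every compact set is a one-step small set, which in particular forces aperiodicity.

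The heart of the matter is (d), for which I would use a Foster--Lyapunov drift argument. Take $V(\boldsymbol{x})=1+U(\boldsymbol{x})$ (or a suitable monotone function thereof); by Assumption~\ref{sa}~(A) we have $U(\boldsymbol{x})\ge K|\boldsymbol{x}|$, so the sublevel sets $\{V\le R\}$ are compact and $V$ is unbounded off petite sets. Since the Dirac part of $P_h$ does not affect the increment, $P_hV(\boldsymbol{x})-V(\boldsymbol{x})=\int_{\mathbb{R}^n}p_h(\boldsymbol{x},\boldsymbol{y})\,(U(\boldsymbol{y})-U(\boldsymbol{x}))\,d\boldsymbol{y}$, and the aim is to show this is $\le 0$ for all $\boldsymbol{x}$ outside a sufficiently large compact set $C$. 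Split the integral into $\{U(\boldsymbol{y})\le U(\boldsymbol{x})\}$, where the integrand is $\le 0$, and $\{U(\boldsymbol{y})>U(\boldsymbol{x})\}$. On the latter, $\alpha_h(\boldsymbol{x},\boldsymbol{y})\le \frac{q_h(\boldsymbol{y},\boldsymbol{x})}{q_h(\boldsymbol{x},\boldsymbol{y})}\,e^{-\beta(U(\boldsymbol{y})-U(\boldsymbol{x}))}$, hence $p_h(\boldsymbol{x},\boldsymbol{y})\le q_h(\boldsymbol{y},\boldsymbol{x})\,e^{-\beta(U(\boldsymbol{y})-U(\boldsymbol{x}))}$, and the super-exponential damping in $U(\boldsymbol{y})$ — used with Assumption~\ref{sa}~(C),(E) to control the Gaussian ratio $q_h(\boldsymbol{y},\boldsymbol{x})/q_h(\boldsymbol{x},\boldsymbol{y})$ and the geometry of the overshoot region $B_h$ — bounds the energy-increasing contribution by a finite constant uniformly in $\boldsymbol{x}$. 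For the energy-decreasing part one shows that, once $|\boldsymbol{x}|$ is large, a set of proposals of probability bounded below lands at an energy $U(\boldsymbol{y})$ an order-one (or larger) amount below $U(\boldsymbol{x})$ and is accepted with probability bounded below, so the negative part dominates. Balancing the two gives $P_hV(\boldsymbol{x})\le V(\boldsymbol{x})$ off a compact set $C$, which is petite by the argument above; the drift criterion for recurrence then applies, and since $\mu$ exists as an invariant probability measure, $P_h$ is positive recurrent; finally, recurrence upgrades to \emph{Harris} recurrence because $p_h>0$ everywhere forces the $\mu$-null part of the Harris decomposition to be empty.

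Combining (a)--(d) and invoking the cited ergodic theorem yields $\sup_{A\in\mathcal{B}(\mathbb{R}^n)}|P_h^k(\boldsymbol{x},A)-\mu(A)|\to 0$ as $k\to\infty$ for every $\boldsymbol{x}\in\mathbb{R}^n$. I expect the drift estimate in step (d) to be the main obstacle: because MALA is \emph{not} geometrically ergodic here one can only hope for $P_hV\le V$ (rather than $P_hV\le\lambda V+b$ with $\lambda<1$) outside a compact set, and establishing even this requires understanding why, in the region where the Euler--Maruyama proposal overshoots, the very small acceptance probability of energy-increasing moves still controls their contribution, while far enough out the overshoot returns to lower energy and is accepted, pulling the chain back in. Making ``positive recurrent $\Rightarrow$ Harris'' fully rigorous — again leaning on the positivity of $p_h$ — is a secondary technical point.
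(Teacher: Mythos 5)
Your overall skeleton --- invariance of $\mu$ via detailed balance, irreducibility and aperiodicity from strict positivity of $p_h$, positive Harris recurrence, and then the Meyn--Tweedie ergodic theorem --- is exactly the paper's route, and your steps (a)--(c) match the paper's argument. The divergence, and the problem, is in step (d). The paper does not prove recurrence by a drift condition at all: it invokes Corollary 2 of Tierney (1994), which says that a Metropolis--Hastings kernel that is irreducible with respect to the very measure it is constructed to preserve is automatically positive Harris recurrent. This is a structural fact about MH chains and requires no Lyapunov function; combined with (a)--(c) it finishes the proof in one line.

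Your proposed Foster--Lyapunov argument for (d), by contrast, rests on a claim that is false in the regime this paper cares about. You assert that for $|\boldsymbol{x}|$ large ``a set of proposals of probability bounded below lands at an energy $U(\boldsymbol{y})$ an order-one (or larger) amount below $U(\boldsymbol{x})$ and is accepted with probability bounded below, so the negative part dominates.'' When $\nabla U$ is nonglobally Lipschitz (e.g.\ $U(x)=x^4/4$), the Euler--Maruyama proposal mean $\boldsymbol{x}-h\nabla U(\boldsymbol{x})$ overshoots to a point of \emph{much higher} energy once $|\boldsymbol{x}|$ exceeds the instability threshold, the Gaussian noise of width $\sqrt{2\beta^{-1}h}$ cannot compensate, and the acceptance probability tends to $0$ as $|\boldsymbol{x}|\to\infty$ --- this is precisely why MALA fails to be geometrically ergodic here (Theorem 4.2 of Roberts--Tweedie, cited in the paper, and the reason the stagnation probability cannot be bounded away from $1$). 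So the energy-decreasing contribution to $P_hV-V$ is not bounded below in magnitude; it vanishes at infinity along with the energy-increasing one, and determining the sign of their difference is a delicate matter your sketch does not resolve. The argument is therefore not salvageable as written; you should replace step (d) with the Tierney corollary (or prove that corollary's content for this kernel, which is a different and drift-free argument exploiting the split form $P_h(\boldsymbol{x},d\boldsymbol{y})=p_h(\boldsymbol{x},\boldsymbol{y})\,d\boldsymbol{y}+r_h(\boldsymbol{x})\,\delta_{\boldsymbol{x}}(d\boldsymbol{y})$ together with the existence of the invariant probability measure).
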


\begin{proof}
  The following classification of  MALA
  is quite standard.  Since $\pi$ and $q_h$ are strictly positive and
  smooth everywhere, MALA is
  irreducible with respect to Lebesgue measure and aperiodic; see,
  e.g., Lemma 1.2 of~\cite{MenTw1996} and references therein.  By the
  design of the acceptance probability, MALA also preserves the 
  probability measure $\mu$.  To confirm this
  statement observe from \eqref{mhkernel} that for any $A \in
 \mathcal{B}(\mathbb{R}^n)$
\begin{align*}
\int_{\mathbb{R}^n} \mu(d\boldsymbol{x}) P_h(\boldsymbol{x},A)  =&  
\int_{\mathbb{R}^n} \left( \int_{A} \pi(\boldsymbol{x}) p_h(\boldsymbol{x},\boldsymbol{y}) d \boldsymbol{y} 
+ \pi(\boldsymbol{x}) r_h(\boldsymbol{x}) \chi _{A} (\boldsymbol{x}) \right) d\boldsymbol{x} \\
=& \int_{\mathbb{R}^n} \int_{A} \left( \pi(\boldsymbol{x})  p_h(\boldsymbol{x},\boldsymbol{y}) 
- \pi(\boldsymbol{y})  p_h(\boldsymbol{y},\boldsymbol{x})  \right)  d\boldsymbol{y}  d\boldsymbol{x} 
+ \mu(A)  
\end{align*}
However, since $p_h(\boldsymbol{x},\boldsymbol{y})$ satisfies detailed balance the first 
term in the above vanishes, and hence,
\[
\int_{\mathbb{R}^n} \mu(d\boldsymbol{x}) P_h(\boldsymbol{x},A)  =  \mu(A)
\]
According to Corollary 2 of \cite{Ti1994},  a Metropolis-Hastings algorithm that is 
irreducible with respect to the same measure it is designed to preserve 
is positive Harris recurrent.  Consequently, MALA 
is irreducible, aperiodic, and positive Harris recurrent.
According to Proposition 6.3 of~\cite{Nu1984}, these properties are 
equivalent to ergodicity of the chain.
\end{proof}

\begin{Remark}
This theorem shows that MALA is ergodic 
with respect to the equilibrium measure of \eqref{SDE1}.    
However, the effect of rejections on the
pathwise approximation of the solution remains to be quantified.  
\end{Remark}

\paragraph{Equilibrium Strong Accuracy.}

In order to prove equilibrium strong accuracy of MALA 
the following lemmas will be helpful.

\begin{lemma}[MALA Stagnation Probability]
Assume~\ref{sa}.   For all integers $\ell \ge 1$, there exists an $h_c>0$ and 
a constant $K_{\ell} >0$, such that for all positive $h<h_c$ and for all $\boldsymbol{x} \in \mathbb{R}^n$,
\[
\E^{\boldsymbol{x}} \left\{   ( \alpha_h(\boldsymbol{x},\boldsymbol{X}^*_1) - 1 )^{2 \ell}  \right\}  
\le K_{\ell} ( 1 + U(\boldsymbol{x})^{4 \ell}  ) h^{3 \ell} \text{,}
\]
where
\[
  \boldsymbol{X}^*_{1} = \boldsymbol{x} - h  \nabla U( \boldsymbol{x} ) + \sqrt{2 \beta^{-1} h} \boldsymbol{\eta} \text{.}
 \]
  and $\boldsymbol{\eta} \sim \mathcal{N}(0,1)^n$.
\label{MALAstagnationprobability}
\end{lemma}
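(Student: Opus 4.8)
The plan is to reduce everything to a pointwise estimate on the logarithm of the Metropolis ratio. Writing $\alpha_h(\boldsymbol x,\boldsymbol y)=1\wedge e^{-\Lambda_h(\boldsymbol x,\boldsymbol y)}$, the explicit forms \eref{ULAqh} of $q_h$ and \eref{pi} of $\pi$ give
\begin{align*}
\Lambda_h(\boldsymbol x,\boldsymbol y)&=\log\frac{q_h(\boldsymbol x,\boldsymbol y)\,\pi(\boldsymbol x)}{q_h(\boldsymbol y,\boldsymbol x)\,\pi(\boldsymbol y)}\\
&=\beta\Big(U(\boldsymbol y)-U(\boldsymbol x)-\tfrac12\langle\nabla U(\boldsymbol x)+\nabla U(\boldsymbol y),\,\boldsymbol y-\boldsymbol x\rangle\Big)+\tfrac{\beta h}{4}\big(|\nabla U(\boldsymbol y)|^2-|\nabla U(\boldsymbol x)|^2\big).
\end{align*}
Since $\alpha_h\le 1$ and $(1-e^{-t})^+\le t^+$, one has $(\alpha_h-1)^{2\ell}=(1-\alpha_h)^{2\ell}=\big((1-e^{-\Lambda_h})^+\big)^{2\ell}\le(\Lambda_h^+)^{2\ell}$, so it suffices to bound $\E^{\boldsymbol x}\{(\Lambda_h(\boldsymbol x,\boldsymbol X^*_1)^+)^{2\ell}\}$, while keeping in reserve the free estimate $(1-\alpha_h)^{2\ell}\le 1$. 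Abbreviate $\boldsymbol v=\boldsymbol X^*_1-\boldsymbol x=-h\nabla U(\boldsymbol x)+\sqrt{2\beta^{-1}h}\,\boldsymbol\eta$; by \ref{sa}~(E) its deterministic part is $\mathcal O(h(1+U(\boldsymbol x)))$ and its stochastic part has Gaussian moments of order $h^{m/2}$.

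Next I would split on the size of $\rho:=h^{3/4}(1+U(\boldsymbol x))$. If $\rho>1$, then $h^{3\ell}(1+U(\boldsymbol x)^{4\ell})\ge 2^{-4\ell}h^{3\ell}(1+U(\boldsymbol x))^{4\ell}>2^{-4\ell}$, so the trivial bound $(1-\alpha_h)^{2\ell}\le 1$ yields the claim once $K_\ell\ge 2^{4\ell}$. If $\rho\le 1$, then $h|\nabla U(\boldsymbol x)|\le Kh^{1/4}$, hence $|\boldsymbol v|\le Kh^{1/4}+\sqrt{2\beta^{-1}h}\,|\boldsymbol\eta|$, so every factor $e^{cK|\boldsymbol v|}$ appearing below is dominated by $C\,e^{cK\sqrt{2\beta^{-1}}|\boldsymbol\eta|}$ uniformly for $h\le h_c\le 1$; with the Gaussian tail this makes all expectations $\E\{|\boldsymbol\eta|^m e^{cK|\boldsymbol v|}\}$ finite and $h$-uniform. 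I would also record the Gr\"onwall consequence of \ref{sa}~(E), namely $1+U(\boldsymbol z)\le(1+U(\boldsymbol w))e^{K|\boldsymbol z-\boldsymbol w|}$, which bounds $U$, $|\nabla U|$, $\|D^2U\|$, $\|D^3U\|$ along the segment $[\boldsymbol x,\boldsymbol X^*_1]$ by $C(1+U(\boldsymbol x))e^{K|\boldsymbol v|}$, and in particular gives $\E^{\boldsymbol x}\{U(\boldsymbol X^*_1)^m\}\le C_m(1+U(\boldsymbol x)^m)$ on $\{\rho\le1\}$.

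On $\{\rho\le1\}$ I would estimate $\Lambda_h^+$ term by term. The trapezoidal bracket equals $-\tfrac12\int_0^1 t(1-t)D^3U(\boldsymbol x+t\boldsymbol v)[\boldsymbol v]^3\,dt$, hence is $\lesssim(1+U(\boldsymbol x))e^{K|\boldsymbol v|}|\boldsymbol v|^3$ in absolute value; taking $2\ell$th powers, splitting $|\boldsymbol v|^{6\ell}\lesssim h^{6\ell}(1+U(\boldsymbol x))^{6\ell}+h^{3\ell}|\boldsymbol\eta|^{6\ell}$, and using $h^{6\ell}(1+U(\boldsymbol x))^{8\ell}\le h^{3\ell}(1+U(\boldsymbol x))^{4\ell}$ on $\{\rho\le1\}$, this contributes $\lesssim h^{3\ell}(1+U(\boldsymbol x))^{4\ell}$. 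For the $\tfrac{\beta h}{4}$ bracket, write (with $\boldsymbol y=\boldsymbol X^*_1$) $|\nabla U(\boldsymbol y)|^2-|\nabla U(\boldsymbol x)|^2=2\langle\nabla U(\boldsymbol x),\nabla U(\boldsymbol y)-\nabla U(\boldsymbol x)\rangle+|\nabla U(\boldsymbol y)-\nabla U(\boldsymbol x)|^2$. The last, nonnegative piece is $\le K^2(U(\boldsymbol x)+U(\boldsymbol y))^2|\boldsymbol v|^2$ by \ref{sa}~(C) and again contributes $\lesssim h^{3\ell}(1+U(\boldsymbol x))^{4\ell}$ after the segment bound and the usual splitting of $|\boldsymbol v|^2$. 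The delicate piece is $\tfrac{\beta h}{2}\langle\nabla U(\boldsymbol x),\nabla U(\boldsymbol y)-\nabla U(\boldsymbol x)\rangle=\tfrac{\beta h}{2}\int_0^1\langle D^2U(\boldsymbol x+t\boldsymbol v)\nabla U(\boldsymbol x),\boldsymbol v\rangle\,dt$; inserting $\boldsymbol v=-h\nabla U(\boldsymbol x)+\sqrt{2\beta^{-1}h}\,\boldsymbol\eta$ splits it into a deterministic-type term $-\tfrac{\beta h^2}{2}\int_0^1\langle D^2U(\boldsymbol x+t\boldsymbol v)\nabla U(\boldsymbol x),\nabla U(\boldsymbol x)\rangle\,dt$ and a fluctuation term of size $\lesssim h^{3/2}(1+U(\boldsymbol x))^2|\boldsymbol\eta|e^{K|\boldsymbol v|}$. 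The crude size $h^2(1+U(\boldsymbol x))^3$ of the first would be fatal, but its \emph{positive part} is at most $\tfrac{\beta K}{2}h^2|\nabla U(\boldsymbol x)|^2\le Ch^2(1+U(\boldsymbol x))^2$ by the one-sided Lipschitz bound \ref{sa}~(D) (which forces $-\langle D^2U(\boldsymbol z)\boldsymbol w,\boldsymbol w\rangle\le K|\boldsymbol w|^2$ for all $\boldsymbol z,\boldsymbol w$); both then contribute $\lesssim h^{3\ell}(1+U(\boldsymbol x))^{4\ell}$ to $\E\{(\Lambda_h^+)^{2\ell}\}$. Adding the bounds and shrinking $h_c$ so that $h^{4\ell}\le h^{3\ell}$ and $(1+U)^{4\ell}\le2^{4\ell}(1+U^{4\ell})$ are absorbed into $K_\ell$ finishes the proof.

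The hard part is exactly the delicate piece. A naive termwise bound $|\Lambda_h|\le|\Lambda_1|+|\Lambda_2|$ fails, because the $\tfrac{\beta h}{4}$ bracket carries a genuine $\mathcal O(h^2)$ contribution of size $\sim h^2(1+U(\boldsymbol x))^3$ whose $2\ell$th moment, $\sim h^{4\ell}(1+U(\boldsymbol x))^{6\ell}$, overshoots $h^{3\ell}(1+U(\boldsymbol x)^{4\ell})$ on the range $h^{-1/2}\lesssim1+U(\boldsymbol x)\lesssim h^{-3/4}$. One is therefore forced to (i) work with $\Lambda_h^+$ rather than $|\Lambda_h|$ and (ii) invoke \ref{sa}~(D) to see that this ``uphill'' contribution's positive part is only $\mathcal O(h^2(1+U(\boldsymbol x))^2)$, which the target can absorb. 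The remaining effort is pure bookkeeping---aligning powers of $h$ against powers of $1+U(\boldsymbol x)$ through the Gr\"onwall bound and the Gaussian-moment estimates---for which the dichotomy on $h^{3/4}(1+U(\boldsymbol x))$ is tailor-made.
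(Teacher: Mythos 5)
Your proposal is correct, and it follows the same basic strategy as the paper's proof: both reduce $1-\alpha_h$ to the positive part of the logarithm of the Metropolis ratio (your $\Lambda_h^+=\beta G^+$ is exactly the paper's function $G$ restricted to the rejection region $R_h(\boldsymbol{x})=\{G>0\}$), and both identify the leading $h^{3/2}$ behaviour coming from the trapezoidal defect $-\tfrac12\int_0^1 t(1-t)D^3U[\boldsymbol v]^3\,dt$ and the cross term $\tfrac{h}{2}\langle\nabla U,D^2U\,\boldsymbol v\rangle$. Where you genuinely depart from the paper is in the treatment of the remainder: the paper Taylor-expands about $h=0$, writes the correction as $\mathcal O(h^2)$, and appeals to Laplace's method plus Assumption~\ref{sa}~(E), without tracking how the implied constant grows with $U(\boldsymbol{x})$ --- which is precisely the issue you flag, since the deterministic part of the $\tfrac{\beta h}{4}$ bracket is of size $h^2(1+U(\boldsymbol{x}))^3$ and its $2\ell$th moment overshoots the target on the range $h^{-1/2}\lesssim 1+U(\boldsymbol{x})\lesssim h^{-3/4}$. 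Your two extra ingredients --- the dichotomy on $h^{3/4}(1+U(\boldsymbol{x}))$ with the trivial bound $(1-\alpha_h)^{2\ell}\le 1$ on the far range, and the observation that Assumption~\ref{sa}~(D) forces $-\langle D^2U(\boldsymbol z)\boldsymbol w,\boldsymbol w\rangle\le K|\boldsymbol w|^2$ so that only the \emph{positive part} of the uphill $\mathcal O(h^2)$ term matters and is merely $\mathcal O(h^2(1+U(\boldsymbol{x}))^2)$ --- are not in the paper (whose proof never invokes (D)) and are what make the bound uniform in $\boldsymbol{x}$ with the stated polynomial dependence $1+U(\boldsymbol{x})^{4\ell}$. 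In short: same skeleton, but your version supplies the bookkeeping that the paper's sketch leaves implicit, at the cost of using one more structural assumption.
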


\begin{proof}
Let $R_h(\boldsymbol{x}) = \{ \boldsymbol{y} \in \mathbb{R}^n : \alpha_h(\boldsymbol{x}, \boldsymbol{y}) < 1 \}$.  
Then,
\begin{align}
  \E^{\boldsymbol{x}} \left\{ (
    \alpha_h(\boldsymbol{x},\boldsymbol{X}^*_1) - 1 )^{2 \ell} \right\} =
  \int_{R_h(\boldsymbol{x})} \left( \frac{q_h(\boldsymbol{y},
      \boldsymbol{x}) \pi(\boldsymbol{y})}{q_h(\boldsymbol{x},
      \boldsymbol{y}) \pi(\boldsymbol{x})} - 1 \right)^{2 \ell}
  q_h(\boldsymbol{x}, \boldsymbol{y}) d\boldsymbol{y}
\label{spiny}
\end{align}
Introduce the function $G: \mathbb{R}^n \times \mathbb{R}^n \to \mathbb{R}$ 
\begin{align*}
G(\boldsymbol{x}, \boldsymbol{y}) &= 
   U(\boldsymbol{y}) - U(\boldsymbol{x}) 
 - \frac{1}{2} \left\langle \nabla U(\boldsymbol{y}) + \nabla U(\boldsymbol{x}), \boldsymbol{y}-\boldsymbol{x} \right\rangle \\
+& \frac{h}{4} \left( \left| \nabla U(\boldsymbol{y}) \right|^2 - \left| \nabla U(\boldsymbol{x}) \right|^2 \right)  \text{.}
\end{align*}
Using the expressions for $q_h$ and $\pi$ (cf.~\eqref{ULAqh} and \eqref{pi}, resp.) 
one can show
\[
 \frac{q_h(\boldsymbol{y}, \boldsymbol{x}) \pi(\boldsymbol{y})}
 	{q_h(\boldsymbol{x}, \boldsymbol{y}) \pi(\boldsymbol{x})} = 
	\exp\left(- \beta G(\boldsymbol{x}, \boldsymbol{y}) \right) \text{.}
\]
Hence, \eqref{spiny} can be written as:
\begin{align} 
 & \E^{\boldsymbol{x}}  \left\{ (\alpha_h(\boldsymbol{x},\boldsymbol{X}^*_1) - 1 )^{2 \ell} \right\}  =  \nonumber \\
& \qquad    (4 \pi \beta^{-1} h)^{-n/2}  \int_{R_h(\boldsymbol{x})}  \left(  e^{- \beta G(\boldsymbol{x}, \boldsymbol{y}) } - 1 \right)^{2 \ell}  
e^{\left(  -  \frac{\left| \boldsymbol{y} - \boldsymbol{x} + h  \nabla U(\boldsymbol{x})  \right|^2}{ 4 \beta^{-1} h} \right) } d \boldsymbol{y} \label{spiny2}
\end{align}
Introduce the map $\boldsymbol{\varphi}: \mathbb{R}^n \to \mathbb{R}^n$ 
\begin{align*}
\boldsymbol{\varphi}(\boldsymbol{\xi}) = \boldsymbol{x} 
+ \sqrt{2 h \beta^{-1}} \boldsymbol{\xi}  + h \nabla U(\boldsymbol{x})  \text{.}
\end{align*}
Set $\Tilde{R}_h(\boldsymbol{x}) = \boldsymbol{\varphi}^{-1}( R_h(\boldsymbol{x}) )$.  A change of variables 
of \eqref{spiny2} under $\boldsymbol{\varphi}$ yields,
\begin{align}
 \E^{\boldsymbol{x}} &  \left\{   ( \alpha_h(\boldsymbol{x},\boldsymbol{X}^*_1) - 1 )^{2 \ell}  \right\}   \nonumber \\
&= (2 \pi)^{-n/2}  \int_{\Tilde{R}_h(\boldsymbol{x})} 
\left(  e^{- \beta G(\boldsymbol{x}, \boldsymbol{x} + \sqrt{2 h \beta^{-1}} \boldsymbol{\xi} + h \nabla U(\boldsymbol{x})) } - 1 \right)^{2 \ell} 
e^{ -  \frac{1}{2} \left|  \boldsymbol{\xi}  \right|^2 } d \boldsymbol{\xi}  
\label{spinxi}
\end{align}

For $h$ sufficiently small the latter integral can be well-approximated by a 
Taylor expansion as follows.  A Taylor expansion of the function $G$ about $h=0$ yields,
\begin{align*}
& G(\boldsymbol{x}, \boldsymbol{x} + \sqrt{2 h \beta^{-1}} \boldsymbol{\xi}  + h \nabla U(\boldsymbol{x})) = \\
&  \qquad \left( \frac{\sqrt{2 \beta^{-1}}}{2} \left\langle \nabla U(\boldsymbol{x}), D^2  U(\boldsymbol{x}) \cdot \boldsymbol{\xi} \right\rangle 
- \frac{2 \beta^{-3/2} }{3} D^3 U(\boldsymbol{x}) \cdot \boldsymbol{\xi}^3 \right) h^{3/2}
+ \mathcal{O}(h^2)
\end{align*}
Substitute this expansion into \eqref{spinxi} to obtain,
\begin{align*}
 &  \int_{\Tilde{R}_h(\boldsymbol{x})}  
  \left(  e^{- \beta G(\boldsymbol{x}, \boldsymbol{x} + \sqrt{2 h \beta^{-1}} \boldsymbol{\xi} + h \nabla U(\boldsymbol{x}))} - 1 \right)^{2 \ell} 
  e^{  -  \frac{1}{2}  \left|  \boldsymbol{\xi}  \right|^2 } d \boldsymbol{\xi} =   \\
&  \quad  \int_{\Tilde{R}_h(\boldsymbol{x})}  \left(  h^{3 \ell}
  \left(\frac{\sqrt{2 \beta^{-1}}}{2} \left\langle \nabla U(\boldsymbol{x}), D^2 U(\boldsymbol{x}) \cdot
      \boldsymbol{\xi} \right\rangle  - \frac{2  \beta^{-3/2}}{3} D^3 U(\boldsymbol{x}) \cdot \boldsymbol{\xi}^3 \right)^{2 \ell} 
      + \mathcal{O}(h^{3 \ell + 1}) \right) e^{ - \frac{1}{2} | \boldsymbol{\xi} |^2}  d \boldsymbol{\xi} 
\end{align*}
An application of Laplace method to approximately evaluate integrals yields,
\begin{align*}
 &   \int_{\Tilde{R}_h(\boldsymbol{x})}  
  \left(  e^{- \beta G(\boldsymbol{x}, \boldsymbol{x} + \sqrt{2 h \beta^{-1}} \boldsymbol{\xi} + h \nabla U(\boldsymbol{x}))} - 1 \right)^{2 \ell} 
  e^{  -  \frac{\beta}{4}   \left|  \boldsymbol{\xi}  \right|^2 } d \boldsymbol{\xi} =   \\
& \quad   \int_{\mathbb{R}^n}  h^{3 \ell}
  \left(\frac{\sqrt{2 \beta^{-1}}}{2} \left\langle \nabla U(\boldsymbol{x}), D^2 U(\boldsymbol{x}) \cdot
      \boldsymbol{\xi} \right\rangle  -  \frac{2  \beta^{-3/2}}{3} D^3 U(\boldsymbol{x}) \cdot \boldsymbol{\xi}^3 \right)^{2 \ell} e^{ -
      \frac{1}{2} | \boldsymbol{\xi} |^2} d \boldsymbol{\xi} +   \mathcal{O}(h^{3 \ell + 1})
\end{align*}
The proof is completed by invoking Assumption~\ref{sa} (E).
\end{proof}

\begin{lemma}[Local Accuracy of MALA]
Assume \ref{sa}.  For all $h>0$ there exists a $C>0$ such that
\begin{description}
\item[A)] the local mean-squared error of~MALA satisfies
\[
 \E^{\boldsymbol{x}} \left\{ \left| \boldsymbol{X}_1 -  \boldsymbol{Y}(h)  \right|^2 \right\}  
 \le C  (1 + U( \boldsymbol{x})^4 )  h^{5/2} \text{,}~~\forall~ \boldsymbol{x} \in \mathbb{R}^n \text{;}
\]
\item[B)] the local mean deviation of~MALA satisfies
\[
\left| \E^{\boldsymbol{x}} \left\{ \boldsymbol{X}_1 -  \boldsymbol{Y}(h)  \right\} \right| 
\le C  (1 + U( \boldsymbol{x} )^3 ) h^2 \text{,}~~\forall~ \boldsymbol{x} \in \mathbb{R}^n \text{.}
\]
\end{description}
\label{MALAlocalaccuracy}
\end{lemma}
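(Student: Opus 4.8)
The plan is to split the one-step MALA update into its Euler--Maruyama proposal and a rejection correction, and then estimate the two pieces using Lemma~\ref{ULAlocalaccuracy} for the proposal and Lemma~\ref{MALAstagnationprobability} for the correction. Writing $R=\{\zeta_0\ge\alpha_h(\boldsymbol{x},\boldsymbol{X}^*_1)\}$ for the rejection event and $\mathbf{1}_R$ for its indicator, the update \eqref{MALA} gives $\boldsymbol{X}_1=\boldsymbol{X}^*_1-(\boldsymbol{X}^*_1-\boldsymbol{x})\mathbf{1}_R$, so
\[
\boldsymbol{X}_1-\boldsymbol{Y}(h)=\bigl(\boldsymbol{X}^*_1-\boldsymbol{Y}(h)\bigr)-(\boldsymbol{X}^*_1-\boldsymbol{x})\,\mathbf{1}_R,
\]
where $\boldsymbol{X}^*_1$ coincides with the Euler--Maruyama step $\tilde{\boldsymbol{X}}_1$ of Lemma~\ref{ULAlocalaccuracy} driven by the same Wiener increment as $\boldsymbol{Y}$, and, in the notation of Lemma~\ref{MALAstagnationprobability}, $\boldsymbol{X}^*_1=\boldsymbol{x}-h\nabla U(\boldsymbol{x})+\sqrt{2\beta^{-1}h}\,\boldsymbol{\eta}$. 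Since $\zeta_0$ is independent of $\boldsymbol{X}^*_1$ (and of $\boldsymbol{Y}$), conditioning on $\boldsymbol{X}^*_1$ replaces $\mathbf{1}_R$ by $1-\alpha_h(\boldsymbol{x},\boldsymbol{X}^*_1)=|\alpha_h(\boldsymbol{x},\boldsymbol{X}^*_1)-1|$ inside any expectation.

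For part (A) I would apply $|a+b|^2\le 2|a|^2+2|b|^2$: the term $\E^{\boldsymbol{x}}\{|\boldsymbol{X}^*_1-\boldsymbol{Y}(h)|^2\}$ is $\mathcal{O}((1+U(\boldsymbol{x})^4)h^3)$ by Lemma~\ref{ULAlocalaccuracy}~(A), and for the rejection term I would condition on $\boldsymbol{X}^*_1$ and use Cauchy--Schwarz,
\[
\E^{\boldsymbol{x}}\bigl\{|\boldsymbol{X}^*_1-\boldsymbol{x}|^2(1-\alpha_h(\boldsymbol{x},\boldsymbol{X}^*_1))\bigr\}\le\bigl(\E^{\boldsymbol{x}}\{|\boldsymbol{X}^*_1-\boldsymbol{x}|^4\}\bigr)^{1/2}\bigl(\E^{\boldsymbol{x}}\{(\alpha_h(\boldsymbol{x},\boldsymbol{X}^*_1)-1)^2\}\bigr)^{1/2}.
\]
Because $\boldsymbol{X}^*_1-\boldsymbol{x}=-h\nabla U(\boldsymbol{x})+\sqrt{2\beta^{-1}h}\,\boldsymbol{\eta}$ and $|\nabla U(\boldsymbol{x})|\le K(1+U(\boldsymbol{x}))$ by Assumption~\ref{sa}~(E), Gaussian fourth-moment bounds give $\E^{\boldsymbol{x}}\{|\boldsymbol{X}^*_1-\boldsymbol{x}|^4\}\le C(1+U(\boldsymbol{x})^4)h^2$ for $h$ small; and Lemma~\ref{MALAstagnationprobability} with $\ell=1$ gives $\E^{\boldsymbol{x}}\{(\alpha_h-1)^2\}\le K_1(1+U(\boldsymbol{x})^4)h^3$. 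The product is $\mathcal{O}((1+U(\boldsymbol{x})^4)h^{5/2})$, which for $h<1$ dominates the $h^3$ contribution, yielding (A).

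For part (B) I would take expectations in the decomposition. Lemma~\ref{ULAlocalaccuracy}~(B) bounds $|\E^{\boldsymbol{x}}\{\boldsymbol{X}^*_1-\boldsymbol{Y}(h)\}|$ by $\mathcal{O}((1+U(\boldsymbol{x})^2)h^2)$, while for the rejection term, conditioning on $\boldsymbol{X}^*_1$ and Cauchy--Schwarz give $|\E^{\boldsymbol{x}}\{(\boldsymbol{X}^*_1-\boldsymbol{x})\mathbf{1}_R\}|\le(\E^{\boldsymbol{x}}\{|\boldsymbol{X}^*_1-\boldsymbol{x}|^2\})^{1/2}(\E^{\boldsymbol{x}}\{(\alpha_h-1)^2\})^{1/2}=\mathcal{O}(h^{1/2}(1+U(\boldsymbol{x})))\cdot\mathcal{O}((1+U(\boldsymbol{x})^2)h^{3/2})=\mathcal{O}((1+U(\boldsymbol{x})^3)h^2)$, using $\E^{\boldsymbol{x}}\{|\boldsymbol{X}^*_1-\boldsymbol{x}|^2\}\le C(1+U(\boldsymbol{x})^2)h$. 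Adding the two estimates gives (B). Throughout, $h$ is taken below the common threshold of Lemmas~\ref{ULAlocalaccuracy} and~\ref{MALAstagnationprobability}; for $h$ above any fixed value the asserted bound is immediate, since $\boldsymbol{X}_1$ and $\boldsymbol{Y}(h)$ both have moments controlled by polynomials in $U(\boldsymbol{x})$ (Lemma~\ref{sdemomentbound} and Assumption~\ref{sa}~(E)), so the $h$-dependence is absorbed into $C$.

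The genuinely new estimate, the $\mathcal{O}(h^3)$ bound on the second moment of $1-\alpha_h$, has already been done in Lemma~\ref{MALAstagnationprobability}, so the only delicate point here is the Cauchy--Schwarz balancing: pairing the $\mathcal{O}(h^{1/2})$ $L^2$-size (equivalently $\mathcal{O}(h)$ after taking the square root of the $L^4$-size) of the proposal increment against the $\mathcal{O}(h^{3/2})$ $L^2$-size of $1-\alpha_h$ is exactly what degrades the local mean-squared order from the $h^3$ of an unrejected Euler step to $h^{5/2}$, which is the ultimate source of the $h^{3/4}$ global rate in Theorems~\ref{MALTAaccuracy} and~\ref{MALAaccuracy}. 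The remaining work is bookkeeping, namely checking that every constant stays polynomial in $U(\boldsymbol{x})$, which follows from Assumption~\ref{sa}~(E) together with the Gaussian moment bounds.
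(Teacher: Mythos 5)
Your proof is correct and follows essentially the same route as the paper: split the one-step error into the Euler--Maruyama proposal error (Lemma~\ref{ULAlocalaccuracy}) plus a rejection correction, and control the latter by Cauchy--Schwarz against the stagnation-probability bound of Lemma~\ref{MALAstagnationprobability}. The only immaterial difference is that you weight the rejection indicator by the proposal increment $|\boldsymbol{X}^*_1-\boldsymbol{x}|$ where the paper uses the exact increment $|\boldsymbol{x}-\boldsymbol{Y}(h)|$; both are $\mathcal{O}(h^{1/2})$ in the relevant $L^p$ norms, so the resulting $h^{5/2}$ and $h^{2}$ rates come out identically.
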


\begin{proof}
\textbf{A)} By definition of MALA,
\begin{align*}
\E^{\boldsymbol{x}} & \left\{    \left| \boldsymbol{X}_1 - \boldsymbol{Y}(h) \right|^2  \right\} = \\
& \E^{\boldsymbol{x}} \left\{ \left| \boldsymbol{X}^*_1 - \boldsymbol{Y}(h) \right|^2 \alpha_h(\boldsymbol{x},\boldsymbol{X}^*_1) 
+ \left| \boldsymbol{x}-\boldsymbol{Y}(h) \right|^2 (1- \alpha_h(\boldsymbol{x},\boldsymbol{X}^*_1))   \right\} \text{.}
\end{align*}
Hence,
\begin{align} \label{MALAsquaredmserror}
& \E^{\boldsymbol{x}}  \left\{   \left| \boldsymbol{X}_1 - \boldsymbol{Y}(h) \right|^2  \right\} \le \nonumber \\
& \qquad \E^{\boldsymbol{x}} \left\{ | \boldsymbol{X}^*_1 - \boldsymbol{Y}(h) |^2 \right\} 
+ \E^{\boldsymbol{x}} \left\{  | \boldsymbol{x} -\boldsymbol{Y}(h) |^2 (1- \alpha_h(\boldsymbol{x},\boldsymbol{X}^*_1))   \right\}  \text{.}
\end{align}
It is clear from this expression that the local mean-squared error of the Metropolis-Hastings 
integrator is due to the local mean-squared error of forward Euler-Maruyama and a term 
due to probable rejections in the Metropolis-Hastings step.  By the Cauchy-Schwarz inequality 
this latter term can be bounded as follows,
\begin{align*}
&\E^{\boldsymbol{x}} \left\{  | \boldsymbol{x}-\boldsymbol{Y}(h) |^2 (\alpha_h(\boldsymbol{x},\boldsymbol{X}^*_1) - 1) \right\} 
  \le \\
& \qquad  \left( \E^{\boldsymbol{x}} \left\{ \left| \boldsymbol{x}-\boldsymbol{Y}(h) \right|^4 \right\} \right)^{1/2} \cdot 
  \left( \E^{\boldsymbol{x}} \left\{ (\alpha_h(\boldsymbol{x},\boldsymbol{X}^*_1) - 1)^2 \right\} \right)^{1/2} \text{.}
\end{align*}
Lemma~\ref{sdemomentbound} implies there exists a constant $K>0$ such that
\[
\left( \E^{\boldsymbol{x}} \left\{ |  \boldsymbol{x} - \boldsymbol{Y}(h)  |^4 \right\} \right)^{1/2} 
\le K ( 1 + U( \boldsymbol{x} )^2 ) h \text{.}
\]
While Lemma~\ref{MALAstagnationprobability}  implies that there exists a constant $K>0$ such that
\[
\left( \E^{\boldsymbol{x}}\left\{ (\alpha_h(\boldsymbol{x},\boldsymbol{X}^*_1) - 1)^2 \right\} \right)^{1/2} 
\le K (1 + U( \boldsymbol{x} )^2 ) h^{3/2}
\]
Thus, there exists a constant $K>0$ such that
\[
 \E^{\boldsymbol{x}} \left\{  | \boldsymbol{x}-\boldsymbol{Y}(h) |^2 (1-\alpha_h(\boldsymbol{x},\boldsymbol{X}^*_1)) \right\}   
 \le K ( 1 + U( \boldsymbol{x} )^4 ) h^{5/2} \text{.}
\]
Observe that \eqref{MALAsquaredmserror} is dominated by the error incurred when a proposal move is rejected, 
and not the $O(h^3)$ error incurred when a proposal move is accepted (cf.~Lemma~\ref{ULAlocalaccuracy}).  
Hence, the local mean-squared accuracy of MALA is $O(h^{5/4})$.

~
\noindent 
\textbf{B)} Similar to the proof of part (A)
\[
\begin{aligned}
  \E^{\boldsymbol{x}} \left\{ \boldsymbol{X}_1 -
    \boldsymbol{Y}(h) \right\} & = \E^{\boldsymbol{x}} \{
    (\boldsymbol{X}_1^* - \boldsymbol{Y}(h) )
    \alpha_h(\boldsymbol{x},\boldsymbol{X}_1^*)\\
    &\qquad + (\boldsymbol{x}-\boldsymbol{Y}(h) ) (1-
    \alpha_h(\boldsymbol{x},\boldsymbol{X}_1^*)) \}
\end{aligned}
\]
By the triangle inequality 
\[
\begin{aligned}
 \left| \E^{\boldsymbol{x}}  \left\{ \boldsymbol{X}_1 -
    \boldsymbol{Y}(h) \right\} \right| &\le 
     \left| \E^{\boldsymbol{x}} \left\{    (\boldsymbol{X}_1^* - \boldsymbol{Y}(h) ) \alpha_h(\boldsymbol{x},\boldsymbol{X}_1^*) \right\}  \right|  \\
    &\qquad+  \left| \E^{\boldsymbol{x}} \left\{ (\boldsymbol{x}-\boldsymbol{Y}(h) ) (1-
    \alpha_h(\boldsymbol{x},\boldsymbol{X}_1^*)) \right\} \right|  \\
    &\le \left| \E^{\boldsymbol{x}} \left\{
    (\boldsymbol{X}_1^* - \boldsymbol{Y}(h) ) \right\} \right| \\
    &\qquad+   \left| \E^{\boldsymbol{x}} \left\{ (\boldsymbol{x}-\boldsymbol{Y}(h) ) (1-
    \alpha_h(\boldsymbol{x},\boldsymbol{X}_1^*)) \right\} \right|  
\end{aligned}
\]
It is clear from this expression that the local mean deviation of the Metropolis-Hastings 
integrator is due to the local mean-deviation of forward Euler-Maruyama and a term 
due to probable rejections in the Metropolis-Hastings step.  By the Cauchy-Schwarz 
and Jensen inequalities,
\begin{align*}
  \left| \E^{\boldsymbol{x}} \left\{    
      \boldsymbol{X}_1 - \boldsymbol{Y}(h)  \right\} \right|^2 
  &\le 2 \left| \E^{\boldsymbol{x}} \left\{ (\boldsymbol{X}^*_1-\boldsymbol{Y}(h))  \right\} \right|^2  \\
  &\qquad + 2 \left| \E^{\boldsymbol{x}} \left\{ (\boldsymbol{x}-\boldsymbol{Y}(h) ) (1- \alpha_h(\boldsymbol{x},\boldsymbol{X}_1^*)) \right\} \right|^2  \\
  \le& 2 \left| \E^{\boldsymbol{x}} \left\{ (\boldsymbol{X}^*_1-\boldsymbol{Y}(h))  \right\} \right|^2 \\
  & +  2 \left( \E^{\boldsymbol{x}} \left\{ \left| \boldsymbol{x}-\boldsymbol{Y}(h) \right|^4 \right\} \right)^{1/2} 
  \cdot  \left( \E^{\boldsymbol{x}}\left\{ (\alpha_h(\boldsymbol{x},\boldsymbol{X}^*_1) - 1)^4 \right\} \right)^{1/2}
\end{align*}
As in part (A) of this proof, the local mean deviation of forward Euler-Maruyama 
Lemma~\ref{ULAlocalaccuracy} together with Lemma~\ref{sdemomentbound} and 
Lemma~\ref{MALAstagnationprobability} imply that,
\[
\left| \E^{\boldsymbol{x}} \left\{    \boldsymbol{X}_1 -\boldsymbol{Y}(h)   \right\} \right|^2 
\le K ( 1 + F(\boldsymbol{x})^{6} ) h^4 \text{.}
\]
\end{proof}

\begin{lemma}[Uniform In Time Bound on Error]
Assume~\ref{sa}.  Then for all $h>0$ and for all $t>0$, there exists a real constant $C>0$ such that
\[
 \E_{\mu}  \E^{\boldsymbol{x}} \left\{   \left| \boldsymbol{X}_{\lfloor t/h \rfloor}  - \boldsymbol{Y}(t)  \right|^2  \right\} 
 \le C \text{.}
\]
\label{MALAboundonaccuracy}
\end{lemma}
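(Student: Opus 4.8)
The plan is to exploit the fact that, when both processes are started from the equilibrium measure, each one is stationary in law, so the bound reduces to a statement about the second moment of $\mu$. First I would apply the elementary inequality $|\boldsymbol{a}-\boldsymbol{b}|^2 \le 2|\boldsymbol{a}|^2 + 2|\boldsymbol{b}|^2$ with $\boldsymbol{a}=\boldsymbol{X}_{\lfloor t/h\rfloor}$ and $\boldsymbol{b}=\boldsymbol{Y}(t)$, then take the expectation $\E^{\boldsymbol{x}}$ conditional on the common initial point and average over that initial point with respect to $\mu$, to obtain
\[
\E_{\mu}\E^{\boldsymbol{x}}\left\{ \left| \boldsymbol{X}_{\lfloor t/h\rfloor} - \boldsymbol{Y}(t)\right|^2 \right\}
\le 2\,\E_{\mu}\E^{\boldsymbol{x}}\left\{ \left| \boldsymbol{X}_{\lfloor t/h\rfloor}\right|^2 \right\}
+ 2\,\E_{\mu}\E^{\boldsymbol{x}}\left\{ \left| \boldsymbol{Y}(t)\right|^2 \right\}\text{.}
\]

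Next I would dispose of the two terms on the right using invariance of $\mu$. For the MALA term, since MALA preserves $\mu$, the identity \eqref{Xpreservesmu} applied with $g(\boldsymbol{x})=|\boldsymbol{x}|^2$ gives $\E_{\mu}\E^{\boldsymbol{x}}\{|\boldsymbol{X}_{\lfloor t/h\rfloor}|^2\}=\int_{\mathbb{R}^n}|\boldsymbol{x}|^2\mu(d\boldsymbol{x})$, for every $k=\lfloor t/h\rfloor$. For the diffusion term, since $\mu$ is the invariant measure of \eqref{SDE1}, starting $\boldsymbol{Y}$ in $\mu$ keeps $\boldsymbol{Y}(t)$ distributed according to $\mu$ for all $t$, so $\E_{\mu}\E^{\boldsymbol{x}}\{|\boldsymbol{Y}(t)|^2\}=\int_{\mathbb{R}^n}|\boldsymbol{x}|^2\mu(d\boldsymbol{x})$ as well; alternatively one can integrate the estimate of Lemma~\ref{sdemomentbound}~(B) against $\mu$, which gives $\E_{\mu}\E^{\boldsymbol{x}}\{|\boldsymbol{Y}(t)|^2\}\le K_2\bigl(1+\int_{\mathbb{R}^n}U(\boldsymbol{x})^2\mu(d\boldsymbol{x})\bigr)$. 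Combining, the left-hand side is bounded by $4\int_{\mathbb{R}^n}|\boldsymbol{x}|^2\mu(d\boldsymbol{x})$, a constant that is in fact independent of both $h$ and $t$, which is more than the statement requires.

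The only substantive point to verify — and the sole place Assumption~\ref{sa} enters — is the finiteness of $\int_{\mathbb{R}^n}|\boldsymbol{x}|^2\mu(d\boldsymbol{x})$, which is exactly what makes \eqref{Xpreservesmu} applicable with $g=|\cdot|^2$. This follows from Assumption~\ref{sa}~(A): since $U(\boldsymbol{x})\ge K|\boldsymbol{x}|$ we have $\pi(\boldsymbol{x})=Z^{-1}e^{-\beta U(\boldsymbol{x})}\le Z^{-1}e^{-\beta K|\boldsymbol{x}|}$, whence $\int_{\mathbb{R}^n}|\boldsymbol{x}|^2\mu(d\boldsymbol{x})\le Z^{-1}\int_{\mathbb{R}^n}|\boldsymbol{x}|^2 e^{-\beta K|\boldsymbol{x}|}d\boldsymbol{x}<\infty$; the same argument shows every polynomial moment of $\mu$ is finite, so the alternative route through Lemma~\ref{sdemomentbound}~(B) is legitimate too (using that $u\mapsto u^2 e^{-\beta u/2}$ is bounded on $[0,\infty)$ to control $\int U^2\,d\mu$). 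There is no real obstacle: the lemma is a stationarity bookkeeping statement, and the whole content is the observation that initializing from equilibrium removes all time dependence from the relevant moments.
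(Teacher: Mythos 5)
Your proof is correct and follows essentially the same route as the paper's: the elementary bound $|\boldsymbol{a}-\boldsymbol{b}|^2\le 2|\boldsymbol{a}|^2+2|\boldsymbol{b}|^2$ followed by invariance of $\mu$ under both MALA and the diffusion, yielding the same constant $4\int_{\mathbb{R}^n}|\boldsymbol{x}|^2\,\mu(d\boldsymbol{x})$. The only difference is that you explicitly verify the finiteness of this second moment via Assumption~\ref{sa}~(A), a detail the paper's one-line proof leaves implicit.
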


\begin{proof}
This bound is a consequence of ergodicity of the solution to \eqref{SDE1} and MALA
with respect to the probability measure $\mu$.  That is, the ergodic property of the 
integrator and the solution of \eqref{SDE1} imply that
\begin{align*}
 & \E_{\mu}  \E^{\boldsymbol{x}} \left\{   \left| \boldsymbol{X}_{\lfloor t/h \rfloor}  - \boldsymbol{Y}(t)  \right|^2    \right\} \\
& \qquad  \le  
2  \E_{\mu} \E^{\boldsymbol{x}} \left\{   \left| \boldsymbol{X}_{\lfloor t/h \rfloor} \right|^2 \right\} + 
2  \E_{\mu} \E^{\boldsymbol{x}} \left\{   \left| \boldsymbol{Y}(t)  \right|^2  \right\} \\
& \qquad = 4 \int_{\mathbb{R}^n} | \boldsymbol{x} |^2  \mu(d \boldsymbol{x}) \text{.}
\end{align*}
\end{proof}

We are now in measure to prove Theorem~\ref{MALAaccuracy} which we restate.

\begin{thm21}[MALA Equilibrium Strong Accuracy]
Assume~\ref{sa}. For all $T>0$ there exist $h_c > 0$ and $C(T)>0$, such that for 
all positive $h<h_c$ and for all $t \in [0, T]$,
\[
\left( \E_{\mu} \E^{\boldsymbol{x}} \left\{   
\left| \boldsymbol{X}_{\lfloor t/h \rfloor} - \boldsymbol{Y}(t)  \right|^2  \right\} \right)^{1/2} 
\le C(T) h^{3/4} \text{.}
\]
\end{thm21}

\begin{proof}
Discretize the interval $[0, T]$ using $N$ equally spaced points in such a fashion 
that $t_k = k h$ for $k=0,...,N$ with $N=\lfloor T/h \rfloor$.  Assume $h>0$ but otherwise 
arbitrary for now.   The goal in this proof is to obtain a global error estimate for the
mean-squared error of MALA by writing the error at the $(k+1)$th step in terms of the 
error at the $k$th step.   For clarity of presentation we will use a rolling constant 
$K$ throughout this proof.  Set
\[
\epsilon_{k+1} = 
\E_{\mu} \E^{\boldsymbol{x}}  \left\{
 \left| \boldsymbol{X}_{k+1} - \boldsymbol{Y}_{t_{k+1},0}(\boldsymbol{x})  \right|^2 \right\} 
\]
for $k=0,...,N-1$.  Specifically, the proof shows that there exists a constant $K>0$ such that
\begin{align}
\epsilon_{k+1}  \le (1+A h) \epsilon_k + K h^{5/2} \text{.}
\label{globalerrorrecursionrelation}
\end{align}
By unraveling this recursive inequality, the order $h^{3/4}$ mean-squared accuracy 
becomes apparent.  To do this the expectation of the error at the $(k+1)th$ step is 
conditioned on knowing all events up to the $k$th step.  For this purpose let $\mathcal{F}_k$ 
denote the sigma-algebra of events up to and including the $kth$-iteration.
Also write,
\begin{align*}
&\left| \boldsymbol{X}_{k+1}  - \boldsymbol{Y}_{t_{k+1},0}(\boldsymbol{x})  \right|^2 = \nonumber \\
& \qquad \left| \boldsymbol{X}_{k+1} - \boldsymbol{Y}_{t_{k+1},t_k}(\boldsymbol{X}_k)  
+ \boldsymbol{Y}_{t_{k+1},t_k}(\boldsymbol{X}_k) - \boldsymbol{Y}_{t_{k+1},t_k}(\boldsymbol{Y}_{t_k,0}(\boldsymbol{x}))  \right|^2 
\end{align*}
Expanding this expression gives:
\begin{align} \label{totalmserror}
&\left| \boldsymbol{X}_{k+1}  - \boldsymbol{Y}_{t_{k+1},0}(\boldsymbol{x})  \right|^2 = \nonumber \\
& \qquad \left| \boldsymbol{X}_{k+1} - \boldsymbol{Y}_{t_{k+1},t_k}(\boldsymbol{X}_k)  \right|^2  
+  \left| \boldsymbol{Y}_{t_{k+1},t_k}(\boldsymbol{X}_k) - \boldsymbol{Y}_{t_{k+1},t_k}(\boldsymbol{Y}_{t_k,0}(\boldsymbol{x}))  \right|^2 \nonumber \\
& \qquad +  2 \left\langle \boldsymbol{X}_{k+1} - \boldsymbol{Y}_{t_{k+1},t_k}(\boldsymbol{X}_k),  
\boldsymbol{Y}_{t_{k+1},t_k}(\boldsymbol{X}_k) - \boldsymbol{Y}_{t_{k+1},t_k}(\boldsymbol{Y}_{t_k,0}(\boldsymbol{x}))  \right\rangle 
\end{align}
It follows from the local mean-squared accuracy of MALA (cf.~Lemma~\ref{MALAlocalaccuracy}),
\begin{align*}
 \E \{   | \boldsymbol{X}_{k+1} - \boldsymbol{Y}_{t_{k+1},t_k}(\boldsymbol{X}_k)  |^2  
~|~ \mathcal{F}_k  \} \le K  \left(1 +  U(\boldsymbol{X}_k)^4  \right) h^{5/2}  \text{.}
\end{align*}
Since MALA preserves $\mu$ by design, and by the law of total expectation,
\begin{align*}
\E_{\mu} \E^{\boldsymbol{x}}  \{  \E \{   | \boldsymbol{X}_{k+1} - \boldsymbol{Y}_{t_{k+1},t_k}(\boldsymbol{X}_k)  |^2 ~|~ \mathcal{F}_k  \}  \} 
\le K  \left(1 +  \mu(U^4)  \right) h^{5/2}  
\end{align*}
where $\mu(U^4) = \int_{\mathbb{R}^n} U^4 d\mu$.  Similarly, Lemma~\ref{regularityofsolutions} 
implies that
\begin{align}
& \E_{\mu}  \E^{\boldsymbol{x}}  \left\{
\left| \boldsymbol{Y}_{t_{k+1},t_k}(\boldsymbol{X}_k) - \boldsymbol{Y}_{t_{k+1},t_k}(\boldsymbol{Y}_{t_k,0}(\boldsymbol{x}))  \right|^2   
\right\} \nonumber \le ( 1 + K h )  \epsilon_k
\end{align}
This leaves the third term in \eqref{totalmserror}.  Set 
\[
\boldsymbol{\Delta} =  \boldsymbol{Y}_{t_{k+1},t_k}(\boldsymbol{X}_k) - \boldsymbol{Y}_{t_{k+1},t_k}(\boldsymbol{Y}_{t_k,0}(\boldsymbol{x})) 
- ( \boldsymbol{X}_k - \boldsymbol{Y}_{t_k,0}(\boldsymbol{x}) )
\]
In terms of which, rewrite the third term in \eqref{totalmserror} as:
\begin{align*}
 \left\langle \boldsymbol{X}_{k+1} - \boldsymbol{Y}_{t_{k+1},t_k}(\boldsymbol{X}_k),  \boldsymbol{X}_k - \boldsymbol{Y}_{t_k,0}(\boldsymbol{x})  \right\rangle   
 +   \left\langle \boldsymbol{X}_{k+1} - \boldsymbol{Y}_{t_{k+1},t_k}(\boldsymbol{X}_k), \boldsymbol{\Delta}  \right\rangle  \text{.}
\end{align*}
Cauchy-Schwarz inequality implies,
\begin{align*}
& \E_{\mu} \E^{\boldsymbol{x}} \left\{  \left\langle\E \left\{  \boldsymbol{X}_{k+1} - \boldsymbol{Y}_{t_{k+1},t_k}(\boldsymbol{X}_k) 
~ |~ \mathcal{F}_k \right\},  \boldsymbol{X}_k - \boldsymbol{Y}_{t_k,0}(\boldsymbol{x})  \right\rangle \right\} \\
&\quad
\le  \E_{\mu} \E^{\boldsymbol{x}} \left\{ \left| \E \left\{  \boldsymbol{X}_{k+1} - \boldsymbol{Y}_{t_{k+1},t_k}(\boldsymbol{X}_k) 
~ |~ \mathcal{F}_k \right\} \right|^2 \right\} ^{1/2}   \epsilon_k^{1/2}
\end{align*}
It follows from Lemma~\ref{MALAlocalaccuracy} and invariance of $\mu$ under MALA that,
\begin{align*}
\E_{\mu} \E^{\boldsymbol{x}} \left\{ \left| \E \left\{  \boldsymbol{X}_{k+1} - \boldsymbol{Y}_{t_{k+1},t_k}(\boldsymbol{X}_k) 
~ |~ \mathcal{F}_k  \right\} \right|^2 \right\} ^{1/2}    \le K    \left(1 +  \mu(U^3)  \right) h^{2} 
\end{align*}
Cauchy-Schwarz inequality also implies,
\begin{align*}
 & \E_{\mu}  \E^{\boldsymbol{x}}   \left\{  \E \left\{  \left\langle \boldsymbol{X}_{k+1} - \boldsymbol{Y}_{t_{k+1},t_k}(\boldsymbol{X}_k), \boldsymbol{\Delta}  \right\rangle  
 ~|~ \mathcal{F}_k   \right\}  \right\} \\
 & \quad \le  \E_{\mu} \E^{\boldsymbol{x}}   \left\{  \E \left\{ \left|  \boldsymbol{X}_{k+1} - \boldsymbol{Y}_{t_{k+1},t_k}(\boldsymbol{X}_k) \right|^2 
 ~|~ \mathcal{F}_k  \right\} \right\} ^{1/2}
 \E_{\mu}  \E^{\boldsymbol{x}}   \left\{  \E \left\{ \left| \boldsymbol{\Delta} \right|^2 
 ~|~ \mathcal{F}_k \right\} \right\} ^{1/2}  \\
 & \quad \le K    \left(1 +  \mu(U^2)  \right)   \E_{\mu}  \E^{\boldsymbol{x}}   \left\{  \E \left\{ \left| \boldsymbol{\Delta} \right|^2 
 ~|~ \mathcal{F}_k   \right\}  \right\} ^{1/2}  h^{5/4}
\end{align*}
However, according to Lemma~\ref{regularityofsolutions},
\begin{align*}
\E_{\mu}  \E^{\boldsymbol{x}}   \left\{  \E \left\{ \left| \boldsymbol{\Delta} \right|^2 
 ~|~ \mathcal{F}_k   \right\}  \right\}^{1/2} \le K h ( 1 + \mu(U) )   \epsilon_k^{1/4}
\end{align*}
In sum, there exists a real constant $K>0$ such that the following term bounds from above 
the third term in \eqref{totalmserror}:
\begin{align*}
K h^{9/4} \epsilon_k^{1/4} + K  h^{2}  \epsilon_k^{1/2} \text{.}
\end{align*}
Using elementary inequalities it is clear that,
\[
2 h^{2}  K \epsilon_k^{1/2} \le h \epsilon_k + h^{5/2} K^2
\]
and
\[
2 h^{9/4} K \epsilon_k^{1/4} \le h^{5/2} K^2 
+ \epsilon_k^{1/2} h^2 \le h^{5/2} K^2 
+ \frac{1}{2} \left( \epsilon_k h + h^{5/2} \right) \text{.}
\]
Combining the bounds on the iterated expectations of \eqref{totalmserror} yields 
\eqref{globalerrorrecursionrelation}.
\end{proof}


\subsection{MALTA and its Properties}

A necessary condition for geometric ergodicity of a Metropolis-Hastings method is that 
the rejection or stagnation probability is bounded away from unity 
(see proposition 5 of \cite{RoTw1996B}).  When the drift in \eqref{ULA} is nonglobally Lipschitz, 
ULA is transient and MALA's stagnation probability cannot be globally bounded away from unity.   
Hence, MALA is not geometrically ergodic.  MALTA corrects this problem by truncating the drift 
in the candidate dynamics in regions where the Euler-Maruyama integrator can be explosive.   
This modification enables a precise control of the moments of the integrator initiated from a 
nonequilibrium initial condition.  With this control we are able to prove pathwise accuracy of 
MALTA on finite time-intervals.

The geometric ergodicity of MALTA for super-exponential target densities with non-degenerate 
level-sets was intuitively clear to the investigators who proposed the algorithm in\cite{RoTw1996A}.  
Subsequently, it was proven in proposition 2.1 of\cite{At2005} using techniques to prove 
geometric ergodicity for  random walk Metropolis candidate dynamics given in\cite{JaHa2000}.  
For the convenience of the reader, the theorem is restated.

\begin{theorem}[MALTA Geometric Ergodicity]
Assume \ref{sa}.  For every $h>0$, there exists a nonnegative function 
$M: \mathbb{R}^n \to \mathbb{R}$, real constant $\rho<1$, such that
the $k$-step transition probability $ P_h^k(\boldsymbol{x}, \cdot)$ of MALTA converges 
to $\mu$ geometrically fast:
\[
\| P_h^k(\boldsymbol{x}, \cdot) - \mu \|_{M} \le M( \boldsymbol{x}) \rho^{k},
~~\forall~k \in \mathbb{N},~~\forall~\boldsymbol{x} \in \mathbb{R}^n \text{,}
\]
where we have introduced a TV norm $\| \cdot \|_{M}$ which is defined for a signed 
measure $\nu$ as:
\[
\| \nu  \|_{M} = \sup_{f : ~|f| \le M} | \nu(f)  | \text{.}
\]
\end{theorem}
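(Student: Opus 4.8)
The plan is to apply the standard drift/minorization criterion for geometric ergodicity of Markov chains (Meyn and Tweedie \cite{MeTw1996}): if $P_h$ is $\psi$-irreducible and aperiodic, possesses a small set, and admits a measurable Lyapunov function $V:\mathbb{R}^n\to[1,\infty)$, constants $\lambda\in(0,1)$, $b<\infty$, and a small set $C$ such that
\[
P_h V(\boldsymbol{x}) \le \lambda\, V(\boldsymbol{x}) + b\, \chi_C(\boldsymbol{x}),\qquad \boldsymbol{x}\in\mathbb{R}^n,
\]
then there is $\rho<1$ with $\|P_h^k(\boldsymbol{x},\cdot)-\mu\|_V \le M(\boldsymbol{x})\rho^k$ for $M$ a constant multiple of $V$, provided $\mu(V)<\infty$ --- which is precisely the assertion of the theorem with $M\propto V$. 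I would take $V(\boldsymbol{x}) = \exp(s\beta U(\boldsymbol{x}))$ for a fixed $s\in(0,1)$; Assumption~\ref{sa}~(A) then ensures $V\ge1$, that $V\to\infty$ off compact sets, and that $\mu(V) = Z^{-1}\int_{\mathbb{R}^n}\exp(-(1-s)\beta U(\boldsymbol{x}))\,d\boldsymbol{x}<\infty$.

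The irreducibility ingredients are routine and parallel the proof of MALA ergodicity above. The MALTA proposal \eqref{MALTAproposal} has increment $-h\,\nabla U(\boldsymbol{x})/(1\vee h|\nabla U(\boldsymbol{x})|)$, a vector of Euclidean norm at most $1$, plus Gaussian noise of covariance $2\beta^{-1}h\,\mathbf{I}$, so its transition density $\tilde q_h(\boldsymbol{x},\boldsymbol{y})$ is smooth and strictly positive on $\mathbb{R}^n\times\mathbb{R}^n$. Since $\pi>0$, the acceptance probability is positive, hence the off-diagonal density $p_h(\boldsymbol{x},\boldsymbol{y})=\tilde q_h(\boldsymbol{x},\boldsymbol{y})\,\alpha_h(\boldsymbol{x},\boldsymbol{y})$ is strictly positive everywhere; therefore MALTA is irreducible with respect to Lebesgue measure and strongly aperiodic, and on any compact set $p_h$ is bounded below by a positive multiple of a fixed probability density, so every compact set is small. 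Boundedness of $P_hV$ on compact sets, needed to fix the constant $b$, follows from the polynomial growth of $\nabla U$ in Assumption~\ref{sa}~(E).

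The geometric drift condition is the heart of the matter and the main obstacle. One must show $P_hV(\boldsymbol{x})\le\lambda V(\boldsymbol{x})$ for all $\boldsymbol{x}$ outside a sufficiently large sublevel set $C=\{\boldsymbol{x}:U(\boldsymbol{x})\le R\}$, which is compact and hence small. Here the truncation in \eqref{MALTAproposal} is essential: it makes the proposal a uniformly bounded perturbation of a Gaussian random walk, so MALTA falls within the class of random-walk-type Metropolis chains whose geometric ergodicity was analyzed by Jarner and Hansen \cite{JaHa2000} and, for this precise algorithm, by Atchad\'e \cite{At2005}. The argument decomposes the proposal law at $\boldsymbol{x}$ into the event that the move lowers $U$ by an amount of order one --- which for $|\boldsymbol{x}|$ large is accepted with probability tending to $1$ --- and its complement, on which the chain either moves inward or is rejected and stagnates; super-exponential decay of $\pi$ together with regularity of the level sets of $U$ then yield a strictly contractive expected value of $V$ outside $C$. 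The residual task is to verify these two structural hypotheses from Assumption~\ref{sa}: that $\pi$ decays super-exponentially, i.e.\ $\lim_{|\boldsymbol{x}|\to\infty}\boldsymbol{x}\cdot\nabla U(\boldsymbol{x})/|\boldsymbol{x}|=+\infty$, which follows by combining the growth bound (A), the derivative bound (E), and the Lyapunov inequality (B) at $\ell=1$ (which forces $|\nabla U|^2$ to dominate $U$ at infinity); and that $U$ has regular level sets, $\limsup_{|\boldsymbol{x}|\to\infty}\langle\boldsymbol{x}/|\boldsymbol{x}|,\ \nabla U(\boldsymbol{x})/|\nabla U(\boldsymbol{x})|\rangle>0$, which can be shown along the same lines. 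With these in hand the drift estimate is exactly Proposition~2.1 of \cite{At2005}, whose detailed computations I would invoke rather than reproduce; together with the ingredients above this yields the $V$-uniform geometric ergodicity asserted.
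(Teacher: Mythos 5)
Your proposal follows essentially the same route as the paper: the paper does not actually prove this theorem but restates it and attributes it to Proposition 2.1 of \cite{At2005}, built on the random-walk-Metropolis techniques of \cite{JaHa2000}, which is exactly the drift/minorization machinery (Lyapunov function, small sets, irreducibility of the truncated proposal) that you outline before likewise deferring the drift computation to that reference. Your sketch is in fact more explicit than the paper's; the one point to treat with care is that your claimed derivation of the super-exponential-decay and regular-contour hypotheses from Assumption~\ref{sa} (in particular, extracting $\lim_{|\boldsymbol{x}|\to\infty}\boldsymbol{x}\cdot\nabla U(\boldsymbol{x})/|\boldsymbol{x}|=+\infty$ from parts (A), (B), (E)) is asserted rather than established --- the Lyapunov inequality (B) only forces $|\nabla U|^2$ to dominate $U$ up to the competing trace term allowed by (E), and says nothing about the radial direction of $\nabla U$ --- but the paper leaves this verification implicit as well.
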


The main implication of geometric ergodicity for our purpose is the following crucial estimate 
on moments of MALTA.  The proof of this lemma is a straightforward, but tedious consequence 
of the results in Proposition 2.1 and Lemma 6.2 of\cite{At2005}.

\begin{lemma}[Estimates on Higher Moments of MALTA]
Assume \ref{sa}.   For every $E_0>0$, for every $\ell \ge 1$, there exists a $h_c>0$
and $C(E_0)>0$  such that for all positive $h<h_c$, for all 
$\boldsymbol{x} : U(\boldsymbol{x}) \le E_0$, and for all $t>0$,
\[
\sup_{h < h_c} \E^{\boldsymbol{x}} \left\{   U(\boldsymbol{Z}_{\lfloor t/h \rfloor})^{\ell} \right\} 
\le  C(E_0) \text{.}
\]
\label{MALTAmomentbound}
\end{lemma}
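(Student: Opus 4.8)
The plan is to deduce the bound from a geometric (Foster--Lyapunov) drift inequality for the MALTA transition kernel $P_h$ whose constants are \emph{uniform in $h$} for $h$ below a threshold, and then to iterate it. Fix $\ell \ge 1$ and set $W_\ell(\boldsymbol{x}) = (1 + U(\boldsymbol{x}))^{\ell}$. I would show that there exist $h_c>0$, $\lambda \in (0,1)$, $b<\infty$ and $R>0$, all independent of $h<h_c$, such that, with $C = \{\boldsymbol{x} : U(\boldsymbol{x}) \le R\}$,
\[
(P_h W_\ell)(\boldsymbol{x}) \le \lambda\, W_\ell(\boldsymbol{x}) + b\, \chi_{C}(\boldsymbol{x}), \qquad \forall\, \boldsymbol{x} \in \mathbb{R}^n,\ \forall\, 0 < h < h_c .
\]
(An equivalent route would use the exponential Lyapunov function $\exp(sU)$ for small $s\in(0,\beta)$, which is the drift function underlying the geometric ergodicity asserted above; either way the substance of the lemma is that, once one tracks the $h$-dependence of every constant in the argument of Proposition~2.1 of \cite{At2005}, the drift and the sublevel set may be taken $h$-independent for $h$ small.)

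Granting the display, iterate it: taking $\E^{\boldsymbol{x}}$ and using $\chi_C \le 1$ gives $\E^{\boldsymbol{x}}\{ W_\ell(\boldsymbol{Z}_{k}) \} \le \lambda^{k}\, W_\ell(\boldsymbol{x}) + b/(1-\lambda)$ for all $k \in \mathbb{N}$. Since $U \ge 0$ by Assumption~\ref{sa}~(A) we have $U^{\ell} \le W_\ell$, and $U(\boldsymbol{x}) \le E_0$ gives $W_\ell(\boldsymbol{x}) \le (1+E_0)^{\ell}$. Hence, for any such $\boldsymbol{x}$ and any $t>0$, with $k=\lfloor t/h\rfloor$,
\[
\E^{\boldsymbol{x}}\{ U(\boldsymbol{Z}_{\lfloor t/h\rfloor})^{\ell} \} \le \E^{\boldsymbol{x}}\{ W_\ell(\boldsymbol{Z}_{\lfloor t/h\rfloor}) \} \le (1+E_0)^{\ell} + \frac{b}{1-\lambda} =: C(E_0),
\]
which is the claimed estimate, manifestly uniform in $t>0$ and in $h<h_c$.

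The work is therefore in establishing the drift inequality with $h$-uniform constants. I would split $(P_h W_\ell)(\boldsymbol{x}) = \int q_h(\boldsymbol{x},\boldsymbol{y})\,\alpha_h(\boldsymbol{x},\boldsymbol{y})\, W_\ell(\boldsymbol{y})\, d\boldsymbol{y} + r_h(\boldsymbol{x})\, W_\ell(\boldsymbol{x})$ and treat two regimes. On the compact set $\{U \le R\}$, crude bounds on the integral produce the $b\,\chi_C$ term. On $\{U > R\}$, where $|\boldsymbol{x}|$ is large by Assumption~\ref{sa}~(A), one must extract the contraction. In the ``bulk'', where $h|\nabla U(\boldsymbol{x})| < 1$ and the MALTA proposal coincides with the Euler--Maruyama step, the one-step action on $W_\ell$ is governed to leading order in $h$ by the generator $L$, so Assumption~\ref{sa}~(B) (applied to the powers of $U$ making up $W_\ell$) gives $LW_\ell \le -\delta W_\ell + M$ and hence $(P_h W_\ell)(\boldsymbol{x}) \le (1-\delta h)W_\ell(\boldsymbol{x}) + Mh$ there, the acceptance probability being $1 - O(h^{3/2})$ in this regime by Lemma~\ref{MALAstagnationprobability} and thus not spoiling the $O(h)$ drift; the $O(h^2)$ Taylor remainder is controlled by Assumption~\ref{sa}~(E) and absorbed by taking $R$ large, then $h$ small. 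Where the drift is truncated the proposal is a bounded perturbation of a Gaussian random walk, and the needed inward push comes not from the proposal but from the acceptance ratio $\exp(-\beta G(\boldsymbol{x},\boldsymbol{y}))$, which super-exponentially penalizes increases of $U$ and so beats the merely polynomial growth of $W_\ell$; this is the random-walk-Metropolis mechanism of \cite{JaHa2000, At2005}, using the coercivity and level-set regularity encoded in Assumptions~\ref{sa}~(A), (C), (E).

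\textbf{Main obstacle.} The delicate point is the uniformity in $h$. As $h \downarrow 0$ the proposal increments shrink, any bulk drift constant satisfies $\lambda = \lambda(h) = 1 - \delta h + o(h) \to 1$, and correspondingly $b = b(h) = O(h) \to 0$; the lemma holds only because $b(h)/(1-\lambda(h))$ stays bounded, the natural balance being $1-\lambda(h) \asymp h \asymp b(h)$, reflecting that MALTA approximates a continuous process whose drift per unit time is $O(1)$. Verifying this balance — and in particular checking that the contraction supplied by the acceptance step in the truncated region does not degrade faster than $O(h)$, so that it can be absorbed without upsetting the $h$-scaling — amounts to re-running the estimates of \cite{At2005} with the dependence on $h$ carried throughout; this is the ``tedious'' bookkeeping alluded to after the statement, with Assumption~\ref{sa}~(B) supplying the structural input that makes the bulk contribution come out with the correct $h$-scaling.
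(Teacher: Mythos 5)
The paper does not actually write out a proof of this lemma: it defers entirely to Proposition~2.1 and Lemma~6.2 of the Atchad\'e reference, describing the deduction as ``straightforward, but tedious.'' Your route --- a Foster--Lyapunov drift inequality $(P_h W)(\boldsymbol{x}) \le \lambda W(\boldsymbol{x}) + b\,\chi_C(\boldsymbol{x})$ with constants uniform in $h<h_c$, followed by iteration and the observation that $b/(1-\lambda)$ stays bounded because $1-\lambda \asymp h \asymp b$ --- is exactly the mechanism underlying those cited results (Atchad\'e's geometric ergodicity proof is a drift condition for MALTA, obtained by the Jarner--Hansen random-walk-Metropolis argument), and your iteration step and the reduction from $U^\ell$ to the Lyapunov function are correct and cleanly done. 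So in substance you are reconstructing the proof the authors had in mind, and you correctly isolate the $h$-uniformity of the drift constants as the entire content of the lemma.

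That said, the one place where your sketch is thinner than it should be is the verification of the drift inequality itself in the tails, which you (like the paper) leave as an outline. Two specific points would need care. First, your claim that the acceptance probability is $1-O(h^{3/2})$ ``and thus does not spoil the $O(h)$ drift'' in the bulk regime is not uniform in $\boldsymbol{x}$: Lemma~\ref{MALAstagnationprobability} gives $\E\{1-\alpha_h\} \le K(1+U(\boldsymbol{x})^{2})h^{3/2}$, and for $U(\boldsymbol{x})$ large at fixed $h$ this correction is not automatically dominated by $\delta h$; one has to exploit the restriction $h|\nabla U(\boldsymbol{x})|<1$ in that regime, or work with the exponential Lyapunov function $e^{s\beta U}$ for which the acceptance ratio itself supplies the contraction. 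Second, in the truncated region the inward displacement of the proposal is a unit vector (not $O(h)$), so the drift there is actually stronger than your ``random-walk'' description suggests, but matching the two regimes so that a single $(\lambda,b)$ works for all $h<h_c$ is precisely the bookkeeping that neither you nor the paper carries out. Since the paper explicitly outsources this to the cited reference, your proposal is at the same level of completeness as the paper's own treatment, and the skeleton you give is the right one.
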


We are now in measure to prove Theorem~\ref{MALTAaccuracy}.  
\begin{thm22}[MALTA Strong Accuracy]
Assume \ref{sa}.  Then, for every $E_0>0$ and $T>0$, there exists a $h_c(E_0) > 0$ 
and a $C(T, E_0)>0$ such that for all positive $h<h_c$, 
for all $\boldsymbol{x} : U(\boldsymbol{x}) \le E_0$, and for all $t \in [0, T]$,
\[
\left( \E^{\boldsymbol{x}} \left\{  
 \left| \boldsymbol{Z}_{\lfloor t/h \rfloor}  - \boldsymbol{Y}_{t,0}  \right|^2  \right\}  \right)^{1/2} 
\le C(T, E_0) h^{3/4} \text{.}
\]
\end{thm22}

\begin{proof}
The proof of this theorem is similar to the proof of Theorem~\ref{MALAaccuracy} with the 
following main differences.
\begin{itemize}
\item The preservation of the moments of the solution to \eqref{SDE1} and the 
Metropolis-Hastings method due to ergodicity are replaced by bounds on moments 
implied by Lemmas~\ref{sdemomentbound} and \ref{MALTAmomentbound}.
\item The $k+1$-step error conditioned on knowing $\mathcal{F}_k$ is split into two parts:
\begin{align*}
\E&\left\{   \left| \boldsymbol{Z}_{k+1}  - \boldsymbol{Y}(t_{k+1}) \right|^2  
					~|~ \mathcal{F}_k \right\}   \le  \\
&\E\left\{   \left| \boldsymbol{Z}_{k+1}  - \boldsymbol{Y}(t_{k+1})  \right|^2  
					~|~ \mathcal{F}_k ~\&~ h | \nabla U(  \boldsymbol{Z}_k ) | \le 1\right\} 
					P(h | \nabla U(  \boldsymbol{Z}_k ) | \le 1 ) \\
+& \E \left\{   \left| \boldsymbol{Z}_{k+1}  - \boldsymbol{Y}(t_{k+1})  \right|^2 
					~|~ \mathcal{F}_k ~\&~  h | \nabla U(  \boldsymbol{Z}_k ) | > 1 \right\}   
					P(h |  \nabla U(  \boldsymbol{Z}_k ) | > 1 ) 
\end{align*}
For the first part, the results of Theorem~\ref{MALAaccuracy} apply with the provision given 
in the first difference above.   For the second term, Chebyshev's inequality implies:
\[
P(h | \nabla U(  \boldsymbol{Z}_k ) | > 1 )  
\le h^4 \E^{\boldsymbol{x}} \left\{ | \nabla U(  \boldsymbol{Z}_k ) |^4  \right\}  \text{.}
\]
This inequality, Assumption \ref{sa} (E), Lemma~\ref{sdemomentbound}, and 
Lemma~\ref{MALTAmomentbound} imply that there exists a constant $C(E_0)>0$ such that,
\[
\E^{\boldsymbol{x}} \left\{   \left| \boldsymbol{Z}_{k+1}  - \boldsymbol{Y}(t_{k+1})  \right|^2 \right\}  
P(h | \nabla U(  \boldsymbol{Z}_k ) | > 1 )  \le h^2 C(E_0) \text{.}
\]
\end{itemize}
\end{proof}

\newpage


\section{Inertial Langevin} \label{sec:InertialLangevin}

Next we shall consider a Metropolized version of a discretization 
of \eqref{SDE2} that extends variational integrators to 
inertial Langevin dynamics.    We begin by introducing the so-called geometric 
Langevin algorithm and discuss its properties.   The section then examines the 
properties of the Metropolis-Hastings adjusted geometric Langevin algorithm, 
including quantifying its accuracy in approximating inertial Langevin dynamics.

The arguments in this section are very closely related to those in \S \ref{sec:OverdampedLangevin}.
However, there are two key differences.  First, the solution to \eqref{SDE2} is no longer
a reversible stochastic process.  Yet, the transition
kernel of the solution process composed with a momentum flip does satisfy detailed balance.
Second, the diffusion in \eqref{SDE2} is only applied to momentum and not position.   
These differences motivate this section on Metropolizing discretizations of nonreversible
processes.  However, we will omit analysis that is redundant.


\subsection{Geometric Langevin Algorithm}

Let $N$ and $h$ be given, set $T=  N h$ and $t_k = h k $ for
$k=0,...,N$.   We shall consider an integrator for \eqref{SDE2} based on 
splitting the Langevin equations into Hamilton's equations for the Hamiltonian
$H$ \eqref{HamiltonsEquations} and  and Ornstein-Uhlenbeck equations 
\eqref{OrnsteinUhlenbeck}.  The solution of Hamilton's equations will be 
approximated by the discrete Hamiltonian map of a variational integrator\cite{MaWe2001}.  
While the exact flow will be used for the Ornstein-Uhlenbeck equations.   These 
flows will be composed in a Strang-type fashion to obtain a pathwise approximant 
to Langevin's equations which we will call the Geometric Langevin Algorithm (GLA).

\paragraph{Variational Integrators.}

Let $L: \mathbb{R}^{2n} \to \mathbb{R}$ denote the Lagrangian obtained from the 
Legendre transform of the Hamiltonian $H$, and given by:
\[
L(\boldsymbol{q}, \boldsymbol{v}) = 
\frac{1}{2} \boldsymbol{v}^T \boldsymbol{M}  \boldsymbol{v} - U(\boldsymbol{q}) \text{.}
\]
A variational integrator is defined by a discrete Lagrangian $L_d : \mathbb{R}^n \times \mathbb{R}^n 
\times \mathbb{R}^+ \to \mathbb{R}$ which is an approximation to the so-called {\em exact discrete 
Lagrangian} which is defined as:
\[
L_d^E(\boldsymbol{q}_0, \boldsymbol{q}_1, h) = \int_0^h L( \boldsymbol{Q}, \dot{\boldsymbol{Q}} ) dt
\]
where $\boldsymbol{Q}(t)$ solves the Euler-Lagrange equations for the Lagrangian $L$ 
with endpoint  conditions $\boldsymbol{Q}(0) = \boldsymbol{q}_0$ and 
$\boldsymbol{Q}(h) = \boldsymbol{q}_1$.

A discrete Lagrangian determines a symplectic integrator as follows.  
Given $(\boldsymbol{q}_0,\boldsymbol{p}_0) \in \mathbb{R}^{2n}$, 
a variational integrator defines an update 
$(\boldsymbol{q}_1, \boldsymbol{p}_1) \in \mathbb{R}^{2n}$  
by the following system of equations:
\begin{equation} \label{del}
\begin{cases}
\begin{array}{rcl}
\boldsymbol{p}_0 &=& -D_1 L_d(\boldsymbol{q}_0, \boldsymbol{q}_1,h) \text{,} \\
\boldsymbol{p}_1 &=& D_2 L_d(\boldsymbol{q}_0, \boldsymbol{q}_1,h)  \text{.}
\end{array}
\end{cases}
\end{equation}
Denote this map by $\theta_h : \mathbb{R}^{2n} \to  \mathbb{R}^{2n}$, i.e., 
\[
\theta_h: ~~ (\boldsymbol{q}_0,\boldsymbol{p}_0) \mapsto (\boldsymbol{q}_1, \boldsymbol{p}_1)  \text{,}
\] 
where $(\boldsymbol{q}_1, \boldsymbol{p}_1)$ solve \eqref{del}.
One can show that $\theta_h$ preserves the canonical symplectic form 
on $ \mathbb{R}^{2n}$, and hence, is Lebesgue measure preserving \cite{MaWe2001}.
By appropriately constructing $L_d$, the map $\theta_h$ can define an approximation 
to the flow of Hamilton's equations for the Hamiltonian $H$  \eqref{HamiltonsEquations}.

A discrete Lagrangian is self-adjoint if:
\begin{equation} \label{SelfAdjointLd}
L_d(\boldsymbol{q}_0, \boldsymbol{q}_1, h) = L_d(\boldsymbol{q}_1, \boldsymbol{q}_0, h) \text{.}
\end{equation}
Some of the results that follow will be specific to the St\"{o}rmer-Verlet integrator
which can be derived from the following discrete Lagrangian:
\begin{equation} \label{VerletLd}
L_d(\boldsymbol{q}_0, \boldsymbol{q}_1, h) = 
\frac{1}{2h} (\boldsymbol{q}_1 - \boldsymbol{q}_0)^T \boldsymbol{M}  (\boldsymbol{q}_1 - \boldsymbol{q}_0) 
- \frac{h}{2} (U(\boldsymbol{q}_0) + U(\boldsymbol{q}_1) ) \text{.}
\end{equation}
This discrete Lagrangian is clearly self-adjoint.

\paragraph{Ornstein-Uhlenbeck Equations.}

The following stochastic evolution map $\psi_{t_k+h, t_k} :  \mathbb{R}^{2n} \to \mathbb{R}^{2n}$ 
defines the stochastic flow of \eqref{OrnsteinUhlenbeck}:
\begin{align} 
& \psi_{t_k+h,t_k}:  \nonumber \\
& \qquad (\boldsymbol{q},\boldsymbol{p}) \mapsto  \left(\boldsymbol{q},  e^{-\gamma \boldsymbol{M}^{-1} h} \boldsymbol{p} 
+ \sqrt{2 \beta^{-1} \gamma} \int_{t_k}^{t_k+h} e^{-\gamma \boldsymbol{M}^{-1} (t_k+h-s)} d \boldsymbol{W}(s) \right) \text{.}  
\label{eq:psi}
\end{align}
For the distribution of the solution, the stochastic flow will be denoted simply by $\psi_{h}$. 
Let $o_h$ denote the transition probability density of $\psi_{t_k+h,t_k}$.  By a change of variables, 
it's transition density is given explicitly by:
\begin{align*}
o_h&  (\boldsymbol{p}_0, \boldsymbol{p}_1) = \\    
& \frac{1}{( 2 \pi )^{n/2}  | \det( \boldsymbol{\Sigma}_h) | }  
\exp\left(-\frac{1}{2} \left( \boldsymbol{p}_1 - e^{- \gamma \boldsymbol{M}^{-1} h} \boldsymbol{p}_0 \right)^T  
\boldsymbol{\Sigma}_h^{-1} \left( \boldsymbol{p}_1 - e^{- \gamma \boldsymbol{M}^{-1} h} \boldsymbol{p}_0 \right) \right)  
\text{,}
\end{align*}
where
\[
 \boldsymbol{\Sigma}_h = \beta^{-1} \left( \boldsymbol{Id}  - \exp(- 2 \gamma \boldsymbol{M}^{-1} h) \right) \boldsymbol{M}  \text{.}
\]
Observe that this transition density does not depend on the initial or terminal position, since \eqref{eq:psi}
fixes position and the Hamiltonian is separable.

\paragraph{Strang-type Splitting.}

GLA is defined as the following Strang-type splitting pathwise approximant to \eqref{SDE2}:
\begin{equation} \label{GLA}
\Tilde{\boldsymbol{X}}_{k+1} := (\Tilde{\boldsymbol{Q}}_{k+1},
\Tilde{\boldsymbol{P}}_{k+1} ) = \psi_{t_k+h,t_k+h/2} \circ \theta_h \circ \psi_{t_k+h/2,t_k}(\Tilde{\boldsymbol{Q}}_{k},
\Tilde{\boldsymbol{P}}_{k} ) 
\end{equation}
for $k=0,...,N-1$.  For any $A \in \mathcal{B}(\mathbb{R}^{2n})$, the transition probability kernel for GLA is given by:
\begin{align*}
Q_h  ((\boldsymbol{q}_0,\boldsymbol{p}_0), A)  = \int_{\mathbb{R}^{2 n} \times A } 
o_{h/2}(\boldsymbol{p}_0 , \boldsymbol{p}_{0}^*) \cdot o_{h/2}(\boldsymbol{p}_1^*,\boldsymbol{p}_1) 
\cdot \delta_{\theta_h(\boldsymbol{q}_0, \boldsymbol{p}_0^*)} 
( d \boldsymbol{q}_1, d \boldsymbol{p}_1^* )   d \boldsymbol{p}_0^* d \boldsymbol{p}_1
\end{align*}
Observe that the zero of the Dirac-delta measure is implicitly defined by
\begin{equation*}
\begin{cases}
\begin{array}{rcl}
\boldsymbol{p}_0^* &=& -D_1 L_d(\boldsymbol{q}_0, \boldsymbol{q}_1,h) \text{,} \\
\boldsymbol{p}_1^* &=& D_2 L_d(\boldsymbol{q}_0, \boldsymbol{q}_1,h)  \text{.}
\end{array}
\end{cases}
\end{equation*}
To make it explicit, we perform a change of variables,
\begin{align*}
Q_h  ((\boldsymbol{q}_0,\boldsymbol{p}_0), A) & = \int_{\mathbb{R}^{2 n} \times A }  
o_{h/2}(\boldsymbol{p}_0 , \boldsymbol{p}_{0}^*) \cdot 
o_{h/2}(\boldsymbol{p}_1^*,\boldsymbol{p}_1)    \cdot 
| \det(D_{12} L_d(\boldsymbol{q}_0, \boldsymbol{q}_1,h) )|   \\
& \qquad \cdot \delta_{(-D_1 L_d(\boldsymbol{q}_0, \boldsymbol{q}_1,h), D_2L_d(\boldsymbol{q}_0, \boldsymbol{q}_1,h))} 
( d \boldsymbol{p}_0^*, d \boldsymbol{p}_1^* )   d \boldsymbol{q}_1 d \boldsymbol{p}_1
\end{align*}
From which it follows the transition density $q_h$ of GLA is given by:
\begin{align} 
q&_h   ((\boldsymbol{q}_0,\boldsymbol{p}_0), (\boldsymbol{q}_1,\boldsymbol{p}_1)) = 
| \det(D_{12} L_d(\boldsymbol{q}_0, \boldsymbol{q}_1,h) )|   \nonumber \\
& \cdot o_{h/2}(\boldsymbol{p}_0, -D_1 L_d(\boldsymbol{q}_0, \boldsymbol{q}_1, h))  
\cdot o_{h/2} (D_2 L_d(\boldsymbol{q}_0, \boldsymbol{q}_1, h), \boldsymbol{p}_1)
 \label{GLAdensity} \text{.}
\end{align}
Observe that an explicit characterization of $q_h$ is available 
even when the variational integrator is implicit.

\paragraph{St\"{o}rmer-Verlet Based GLA.}

In this paragraph a local accuracy result is stated for the St\"{o}rmer-Verlet based GLA \eqref{SVGLA}.
Let $\Tilde{\boldsymbol{X}}_1 := (\Tilde{\boldsymbol{Q}}_1, \Tilde{\boldsymbol{P}}_1)$.  Recall,
that given $(\boldsymbol{q}_0 , \boldsymbol{p}_0) \in \mathbb{R}^{2n}$ at time zero and $h>0$, 
the algorithm computes $\Tilde{\boldsymbol{X}}_1$ by the following explicit update rule:
\begin{equation*}
\begin{cases}
& \Tilde{\boldsymbol{Q}}_{1} = \boldsymbol{q}_0 
+ h \boldsymbol{M}^{-1} e^{-\gamma \boldsymbol{M}^{-1} h/2} \boldsymbol{p}_0  
- \frac{h^2}{2} \boldsymbol{M}^{-1} \nabla U(  \boldsymbol{q}_0  ) \\
& \qquad \qquad + h \sqrt{2 \beta^{-1} \gamma} \int_{0}^{h/2} \boldsymbol{M}^{-1} e^{- \gamma \boldsymbol{M}^{-1} (h/2-s) } d\boldsymbol{W}(s) \text{,} \\
& \Tilde{\boldsymbol{P}}_{1} =  e^{-\gamma \boldsymbol{M}^{-1} h}  \boldsymbol{p}_0  
- \frac{h}{2} e^{- \gamma \boldsymbol{M}^{-1} h/2} \left( \nabla U(\boldsymbol{q}_0  ) + \nabla U( \Tilde{\boldsymbol{Q}}_{1} ) \right) \\
& \qquad \qquad + \sqrt{2 \beta^{-1} \gamma} \int_{0}^{h} e^{- \gamma \boldsymbol{M}^{-1} (h -s ) } d \boldsymbol{W}(s) \text{.}
\end{cases}
\end{equation*}
The proof of the following lemma is similar to Lemma~\ref{ULAlocalaccuracy}, 
and hence, is omitted.

\begin{lemma}[Local Accuracy of St\"{o}rmer-Verlet-Based GLA]
Assume \ref{sa} (C) and \ref{sa} (E) on the potential energy.  For all $h>0$ and 
$\boldsymbol{x} = (\boldsymbol{q}_0, \boldsymbol{p}_0) \in \mathbb{R}^{2n}$, there exists 
$C(\boldsymbol{q}_0, \boldsymbol{p}_0)>0$ such that $\mu(C) < \infty$ and
\begin{description}
\item[A)] the local mean-squared error of \eqref{SVGLA} satisfies
\[
\left( \E^{\boldsymbol{x}} \left\{ 
\left| \Tilde{\boldsymbol{X}}_1 -  \boldsymbol{Y}(h)  \right|^2 \right\}  \right)^{1/2} 
\le C(\boldsymbol{q}_0, \boldsymbol{p}_0)  h^{3/2} \text{;}
\]
\item[B)] the local mean deviation of \eqref{SVGLA} satisfies
\[
\left| \E^{\boldsymbol{x}} \left\{ \Tilde{\boldsymbol{X}}_1 -  \boldsymbol{Y}(h)  \right\} \right| 
\le C(\boldsymbol{q}_0, \boldsymbol{p}_0) h^3 \text{.}
\]
\end{description}
\label{GLAlocalaccuracy}
\end{lemma}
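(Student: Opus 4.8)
The plan is to mirror the proof of Lemma~\ref{ULAlocalaccuracy}: expand both the exact flow $\boldsymbol{Y}(h)$ and the one-step map $\Tilde{\boldsymbol{X}}_1$ in powers of $h$ --- an It\^o--Taylor expansion for the former and an ordinary Taylor expansion of the matrix exponentials and of $\nabla U$ for the latter --- match the leading terms, and estimate the remainders in $L^2$ using Assumptions~\ref{sa}~(C) and~\ref{sa}~(E) together with short-time moment bounds. First I would record the elementary short-time estimates: for each fixed $\boldsymbol{x}=(\boldsymbol{q}_0,\boldsymbol{p}_0)$ and each $p\ge 2$ there is a function $C(\boldsymbol{q}_0,\boldsymbol{p}_0)$, polynomial in $|\boldsymbol{p}_0|$, $|\boldsymbol{q}_0|$ and $U(\boldsymbol{q}_0)$ (hence with $\mu(C)<\infty$), bounding $\sup_{0\le t\le h}\E^{\boldsymbol{x}}\{|\boldsymbol{P}(t)|^p+U(\boldsymbol{Q}(t))^p\}$ and the analogous moments of $\Tilde{\boldsymbol{X}}_1$; these follow from \eqref{SDE2}, from Assumption~\ref{sa}~(E) (which trades $|\nabla U|$, $\|D^2U\|$, $\|D^3U\|$ for powers of $1+U$), and from a one-step Gr\"onwall argument. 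Note that the configuration equation in \eqref{SDE2} carries no noise, so $\boldsymbol{Q}(t)-\boldsymbol{q}_0=O(t)$ in every $L^p$, a fact used repeatedly below.

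For part~(A) I would split $\Tilde{\boldsymbol{X}}_1-\boldsymbol{Y}(h)$ into momentum and configuration components. For the momentum component the key observation is that the stochastic term in $\Tilde{\boldsymbol{P}}_1$ in \eqref{SVGLA}, namely $\sqrt{2\beta^{-1}\gamma}\int_0^h e^{-\gamma\boldsymbol{M}^{-1}(h-s)}d\boldsymbol{W}(s)$, is exactly the stochastic convolution appearing in the variation-of-constants form of $\boldsymbol{P}(h)$; hence the momentum discrepancy reduces to the difference between $-\int_0^h e^{-\gamma\boldsymbol{M}^{-1}(h-s)}\nabla U(\boldsymbol{Q}(s))\,ds$ and its trapezoidal approximant $-\tfrac h2 e^{-\gamma\boldsymbol{M}^{-1}h/2}\big(\nabla U(\boldsymbol{q}_0)+\nabla U(\Tilde{\boldsymbol{Q}}_1)\big)$. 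Expanding the exponentials about $h=0$ and $\nabla U(\boldsymbol{Q}(s))$, $\nabla U(\Tilde{\boldsymbol{Q}}_1)$ about $\boldsymbol{q}_0$, one checks the two expressions agree through $O(h^2)$, the mismatch being $O(h^{5/2})$ in $L^2$ once $\nabla U(\Tilde{\boldsymbol{Q}}_1)-\nabla U(\boldsymbol{Q}(h))$ is controlled by Assumption~\ref{sa}~(C) and Cauchy--Schwarz against the moment bounds above; thus $\E^{\boldsymbol{x}}\{|\Tilde{\boldsymbol{P}}_1-\boldsymbol{P}(h)|^2\}\le C(\boldsymbol{q}_0,\boldsymbol{p}_0)h^5$, which is subdominant. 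For the configuration component, the noise-free parts of $\Tilde{\boldsymbol{Q}}_1$ and of $\boldsymbol{Q}(h)$ coincide through $O(h^2)$ with mismatch $O(h^3)$ --- the second order of the St\"ormer--Verlet/Strang splitting --- whereas the stochastic parts, $h\sqrt{2\beta^{-1}\gamma}\,\boldsymbol{M}^{-1}\int_0^{h/2}e^{-\gamma\boldsymbol{M}^{-1}(h/2-s)}d\boldsymbol{W}(s)$ and $\sqrt{2\beta^{-1}\gamma}\,\boldsymbol{M}^{-1}\int_0^h(h-s)d\boldsymbol{W}(s)$, are each $O(h^{3/2})$ in $L^2$ and, by the It\^o isometry, so is their difference. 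Combining, $\E^{\boldsymbol{x}}\{|\Tilde{\boldsymbol{Q}}_1-\boldsymbol{Q}(h)|^2\}\le C(\boldsymbol{q}_0,\boldsymbol{p}_0)h^3$, which dominates and yields the claimed $O(h^{3/2})$ root-mean-square bound.

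For part~(B) I would take expectations, so that every It\^o integral drops out and $\E^{\boldsymbol{x}}\{\Tilde{\boldsymbol{X}}_1-\boldsymbol{Y}(h)\}$ becomes the difference between the deterministic Strang step and $\E^{\boldsymbol{x}}\{\boldsymbol{Y}(h)\}$. The deterministic Strang step differs by $O(h^3)$ from the flow of the noise-free system $\dot{\boldsymbol{Q}}=\boldsymbol{M}^{-1}\boldsymbol{P}$, $\dot{\boldsymbol{P}}=-\nabla U(\boldsymbol{Q})-\gamma\boldsymbol{M}^{-1}\boldsymbol{P}$, once more by the second order of the splitting; and $\E^{\boldsymbol{x}}\{\boldsymbol{Y}(h)\}$ differs from that same flow only through the nonlinearity of $\nabla U$ acting on $\mathrm{Var}(\boldsymbol{Q}(t))=O(t^3)$ and $\mathrm{Var}(\boldsymbol{P}(t))=O(t)$, a correction of size $O(h^4)$. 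Hence $|\E^{\boldsymbol{x}}\{\Tilde{\boldsymbol{X}}_1-\boldsymbol{Y}(h)\}|\le C(\boldsymbol{q}_0,\boldsymbol{p}_0)h^3$.

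I expect the main obstacle to be not any single estimate but the bookkeeping: because $\nabla U$ is only nonglobally Lipschitz, every remainder term must be closed using Assumption~\ref{sa}~(E) to trade derivatives of $U$ for powers of $1+U$ and Assumption~\ref{sa}~(C) for increments of $\nabla U$, then bounded in $L^2$ via the short-time moments of $U(\boldsymbol{Q}(t))$, $U(\Tilde{\boldsymbol{Q}}_1)$ and $|\boldsymbol{P}(t)|$, checking throughout that the constants produced stay polynomial in the initial data and therefore $\mu$-integrable.
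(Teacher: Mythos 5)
Your proposal is correct and takes the approach the paper intends: the paper omits this proof, saying only that it parallels Lemma~\ref{ULAlocalaccuracy}, and your outline --- the exact match of the stochastic convolution in the momentum update, the $O(h^{5/2})$ trapezoidal quadrature error, the $O(h^{3/2})$ mismatch of the position noise terms as the dominant contribution to part~(A), and the cancellation of the It\^o integrals plus the second-order splitting error for part~(B) --- is precisely the computation that argument requires. The only caveat is that the one-step moment bounds on $U(\Tilde{\boldsymbol{Q}}_1)$ that Assumptions~\ref{sa}~(C) and~(E) actually yield are exponential rather than polynomial in $|\boldsymbol{p}_0|$ and $U(\boldsymbol{q}_0)$ (since $\nabla U$ is only controlled by $1+U$), but they are still finite and $\mu$-integrable for $h$ small, which is all the lemma asserts.
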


As has been demonstrated, the Markov chain  defined by 
sampling \eqref{SVGLA} every time-step possesses a smooth probability transition density 
with respect to Lebesgue measure.  For globally Lipschitz potential forces, this discretization 
is a first-order strongly accurate integrator for \eqref{SDE2}.  However,  if the potential force 
is nonglobally Lipschitz, the St\"{o}rmer-Verlet based GLA is plagued with the 
same transient behavior as Euler-Maruyama for overdamped Langevin.   As before, a 
Metropolis-Hastings method is proposed to stochastically stabilize this Markov chain.


\subsection{MAGLA and its Properties}

MAGLA is the Metropolis-adjusted GLA.  Given $h$ and $(\boldsymbol{Q}_k, \boldsymbol{P}_k)$,
MAGLA computes a proposal move according to a step of GLA:
\[
(\boldsymbol{Q}_{k+1}^*, \boldsymbol{P}_{k+1}^*) = 
\psi_{t_k+h,t_k+h/2} \circ \theta_h \circ \psi_{t_k+h/2,t_k}( \boldsymbol{Q}_k, \boldsymbol{P}_k) \text{.}
\]
MAGLA accepts this proposal move with probability:
\[
\alpha_h((\boldsymbol{q}_0,\boldsymbol{p}_0), (\boldsymbol{q}_1,\boldsymbol{p}_1)) = 1 \wedge 
\frac{q_h ((\boldsymbol{q}_1,\boldsymbol{p}_1), (\boldsymbol{q}_0,-\boldsymbol{p}_0))  \pi(\boldsymbol{q}_1,\boldsymbol{p}_1)}
        {q_h ((\boldsymbol{q}_0,\boldsymbol{p}_0), (\boldsymbol{q}_1,-\boldsymbol{p}_1)) \pi(\boldsymbol{q}_0,\boldsymbol{p}_0)}
\]
Altogether, the MAGLA update is given by:
\begin{align*} 
& \boldsymbol{X}_{k+1} :=  ( \boldsymbol{Q}_{k+1}, \boldsymbol{P}_{k+1} ) = \nonumber \\ 
& \qquad \begin{cases}
(\boldsymbol{Q}_{k+1}^*, \boldsymbol{P}_{k+1}^*)  \qquad &  
\text{if}~~\zeta_k< \alpha_h((\boldsymbol{Q}_k,\boldsymbol{P}_k), (\boldsymbol{Q}_{k+1}^*,\boldsymbol{P}_{k+1}^*)) \\
\varphi(\boldsymbol{Q}_k, \boldsymbol{P}_k) \qquad &  \text{otherwise}
\end{cases} 
\end{align*}
for $k=0,...,N-1$.   We stress the momentum flip in the acceptance probability and
upon rejection is introduced because inertial Langevin dynamics is nonreversible.
Yet, the exact transition density of the solution to inertial Langevin composed with
a momentum is reversible, i.e., does satisfy detailed balance.  We will see in the 
proof of Lemma~\ref{MAGLAstagnationprobability} that without this momentum
flip, pathwise accuracy cannot be achieved with MAGLA.

MAGLA preserves $\mu$, and hence, is not transient even 
when its underlying variational integrator is explicit.   In fact, it is often 
straightforward to classify MAGLA as an ergodic Markov chain even if the 
potential force is nonglobally Lipschitz.   However, with an explicit variational 
integrator MAGLA is often no longer geometrically ergodic even with momentum 
flips.   To correct this problem, a modification of MAGLA can be implemented 
where the drift is truncated in regions where an explicit discretization of the drift 
causes high rejection rates.    This modification which would be analogous to 
MALTA is not implemented here.  Instead, we will concentrate on proving 
pathwise accuracy from an initial condition restricted to the equlibrium measure.  
This restriction enables us to obtain bounds on relevant moments of the 
Metropolized integrator.  To prove pathwise accuracy the following lemmas 
will be useful.

The following lemma shows for an arbitrary symmetric variational integrator, the 
acceptance probability of MAGLA is related to the energy change in its variational 
integrator.  This implies that the acceptance probability of MAGLA has mechanical 
and intrinsic meaning.  It also simplifies the subsequent analysis of MAGLA.

\begin{lemma}[Acceptance Probability of MAGLA]
Let $q_h$ denote the transition probability density of GLA.  
Let $L_d$ denote the discrete Lagrangian associated with the variational integrator $\theta_h$.
Assume that $L_d$ is self-adjoint (cf.~\ref{SelfAdjointLd}).  
Then, the acceptance probability of MAGLA satisfies:
\[
\alpha_h((\boldsymbol{q}_0, \boldsymbol{p}_0), (\boldsymbol{q}_1, \boldsymbol{p}_1)) = 
1 \wedge \exp\left( - \beta \Delta E(\boldsymbol{q}_0, \boldsymbol{q}_1) \right) \text{,}
\]
where we have introduced:
\begin{align} \label{DeltaE}
& \Delta E(\boldsymbol{q}_0, \boldsymbol{q}_1)  =  
     \frac{1}{2} D_2 L_d(\boldsymbol{q}_0, \boldsymbol{q}_1, h)^T \boldsymbol{M}^{-1}  D_2 L_d(\boldsymbol{q}_0, \boldsymbol{q}_1, h) 
  + U( \boldsymbol{q}_1 ) \nonumber \\ 
 & \qquad - \frac{1}{2} D_1 L_d(\boldsymbol{q}_0, \boldsymbol{q}_1, h)^T \boldsymbol{M}^{-1}  D_1 L_d(\boldsymbol{q}_0, \boldsymbol{q}_1, h) 
  - U( \boldsymbol{q}_0 ) \text{.}
\end{align}
\label{MAGLAacceptanceprobability}
\end{lemma}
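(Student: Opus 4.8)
The plan is to compute the ratio defining $\alpha_h$ directly from the explicit formula \eqref{GLAdensity} for the GLA transition density and show that, by self-adjointness of $L_d$, the Jacobian factors and the Ornstein--Uhlenbeck Gaussian prefactors all cancel, leaving only the exponential of the potential part of $\pi$ together with the quadratic momentum terms that the variational integrator generates through $D_1 L_d$ and $D_2 L_d$. First I would write out the numerator $q_h((\boldsymbol{q}_1,\boldsymbol{p}_1),(\boldsymbol{q}_0,-\boldsymbol{p}_0))$ and denominator $q_h((\boldsymbol{q}_0,\boldsymbol{p}_0),(\boldsymbol{q}_1,-\boldsymbol{p}_1))$ using \eqref{GLAdensity}. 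The key observation is that the determinant factor in the numerator is $|\det(D_{12} L_d(\boldsymbol{q}_1,\boldsymbol{q}_0,h))|$ while in the denominator it is $|\det(D_{12} L_d(\boldsymbol{q}_0,\boldsymbol{q}_1,h))|$, and self-adjointness \eqref{SelfAdjointLd} forces these to be equal (differentiating $L_d(\boldsymbol{q}_0,\boldsymbol{q}_1,h)=L_d(\boldsymbol{q}_1,\boldsymbol{q}_0,h)$ in both arguments shows $D_{12}L_d$ is symmetric under swapping its first two arguments up to transpose, so the determinants agree). Hence the Jacobian factors cancel.

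Next I would handle the two $o_{h/2}$ factors. The density $o_h(\boldsymbol{p}_0,\boldsymbol{p}_1)$ depends only on momenta, and its Gaussian normalization constant $((2\pi)^{n/2}|\det\boldsymbol{\Sigma}_{h/2}|)^{-1}$ is the same in numerator and denominator, so it cancels. What remains is the ratio of the four exponents. Here I would use the chain of identities implied by the variational integrator equations \eqref{del}: in the numerator the first OU factor involves $-D_1 L_d(\boldsymbol{q}_1,\boldsymbol{q}_0,h)$, which by self-adjointness equals $-D_2 L_d(\boldsymbol{q}_0,\boldsymbol{q}_1,h)$, and the second involves $D_2 L_d(\boldsymbol{q}_1,\boldsymbol{q}_0,h) = D_1 L_d(\boldsymbol{q}_0,\boldsymbol{q}_1,h)$; in the denominator the analogous quantities are $-D_1 L_d(\boldsymbol{q}_0,\boldsymbol{q}_1,h)$ and $D_2 L_d(\boldsymbol{q}_0,\boldsymbol{q}_1,h)$. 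Substituting these into the Gaussian exponents and using that the momentum arguments in the two cases differ only by the sign flip on $\boldsymbol{p}_0$ and $\boldsymbol{p}_1$ (which leaves the quadratic forms invariant), the cross terms and the pure-$\boldsymbol{p}$ terms cancel between numerator and denominator, and what is left is exactly the difference of the quadratic forms $\tfrac12 D_2 L_d^T\boldsymbol{M}^{-1} D_2 L_d - \tfrac12 D_1 L_d^T\boldsymbol{M}^{-1} D_1 L_d$ evaluated at $(\boldsymbol{q}_0,\boldsymbol{q}_1,h)$. Finally the ratio $\pi(\boldsymbol{q}_1,\boldsymbol{p}_1)/\pi(\boldsymbol{q}_0,\boldsymbol{p}_0)$ from \eqref{IM2} contributes $\exp(-\beta(H(\boldsymbol{q}_1,\boldsymbol{p}_1)-H(\boldsymbol{q}_0,\boldsymbol{p}_0)))$; the kinetic parts $\tfrac12\boldsymbol{p}_i^T\boldsymbol{M}^{-1}\boldsymbol{p}_i$ are precisely cancelled against momentum contributions left over from the $o_{h/2}$ exponents (one needs the identity $\boldsymbol{p}_1 = e^{-\gamma\boldsymbol{M}^{-1}h}\boldsymbol{p}_0^{\mathrm{OU,half}}$-type relations carefully, but everything is Gaussian bookkeeping), leaving only $U(\boldsymbol{q}_1)-U(\boldsymbol{q}_0)$ plus the two quadratic forms — i.e. exactly $-\beta\,\Delta E(\boldsymbol{q}_0,\boldsymbol{q}_1)$ as defined in \eqref{DeltaE}. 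Then $\alpha_h = 1\wedge e^{-\beta\Delta E}$ follows.

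I expect the main obstacle to be the careful Gaussian algebra in the middle step: tracking which momentum variables play which role in the composition $\psi\circ\theta_h\circ\psi$, verifying that the half-step OU exponents combine with the kinetic part of $\pi$ so that all explicit momenta drop out, and confirming that the sign flips $\boldsymbol{p}_0\mapsto-\boldsymbol{p}_0$, $\boldsymbol{p}_1\mapsto-\boldsymbol{p}_1$ in the acceptance ratio are exactly what is needed to make the cross terms cancel rather than double. A secondary technical point is the determinant identity: I would verify that $D_{12}L_d(\boldsymbol{q}_0,\boldsymbol{q}_1,h) = (D_{12}L_d(\boldsymbol{q}_1,\boldsymbol{q}_0,h))^T$ from \eqref{SelfAdjointLd}, so that the absolute values of the determinants coincide. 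Once these cancellations are established, collecting terms gives \eqref{DeltaE} immediately, and the $1\wedge(\cdot)$ structure is inherited directly from the definition of the acceptance probability.
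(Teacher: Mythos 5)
Your proposal follows essentially the same route as the paper's proof: expand the explicit GLA transition density \eqref{GLAdensity}, use self-adjointness of $L_d$ to cancel the determinant factors and to swap $D_1 L_d$ and $D_2 L_d$ in the reversed density, verify that the momentum flips and the commuting Gaussian quadratic forms make the cross terms drop out, and let the kinetic part of the $\pi$-ratio absorb the residual $\boldsymbol{p}_0^T\boldsymbol{M}^{-1}\boldsymbol{p}_0 - \boldsymbol{p}_1^T\boldsymbol{M}^{-1}\boldsymbol{p}_1$ terms, leaving $-\beta\,\Delta E$. The only cosmetic remark is that the final cancellation needs no dynamical OU relation, just the algebraic identity $\boldsymbol{M}^{-1}(\boldsymbol{Id}-\boldsymbol{B}_{h/2}^2)^{-1} - \boldsymbol{B}_{h/2}\boldsymbol{M}^{-1}(\boldsymbol{Id}-\boldsymbol{B}_{h/2}^2)^{-1}\boldsymbol{B}_{h/2} = \boldsymbol{M}^{-1}$; otherwise the argument is correct as sketched.
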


\begin{proof}
Introduce the abbreviations: $D_i L_d = D_i L_d( \boldsymbol{q}_0, \boldsymbol{q}_1, h)$ for $i=1,2$,
and $\boldsymbol{B}_{h/2} = \exp(-\gamma \boldsymbol{M}^{-1} h/2)$.   Expanding \eqref{GLAdensity} yields,
\begin{align}  \label{q0p0toq1p1}
& q_h   ((\boldsymbol{q}_0,\boldsymbol{p}_0),  (\boldsymbol{q}_1,\boldsymbol{p}_1)  ) =   
\frac{  | \det(D_{12} L_d(\boldsymbol{q}_0, \boldsymbol{q}_1,h) )|  }{ (2 \pi)^n | \det( \boldsymbol{\Sigma}_{h/2}) |}  \nonumber \\
& \quad \exp  \left(  - \frac{\beta}{2}  \left( (D_1 L_d 
+  \boldsymbol{B}_{h/2}  \boldsymbol{p}_0 )^T \boldsymbol{M}^{-1} ( \boldsymbol{Id} -  \boldsymbol{B}_{h/2}^2)^{-1} 
(D_1 L_d +  \boldsymbol{B}_{h/2} \boldsymbol{p}_0 ) \right. \right. \nonumber  \\
& \qquad \qquad \left. \left.+ (  \boldsymbol{p}_1 -  \boldsymbol{B}_{h/2} D_2 L_d )^T \boldsymbol{M}^{-1} 
( \boldsymbol{Id} -  \boldsymbol{B}_{h/2}^2)^{-1} ( \boldsymbol{p}_1 -  \boldsymbol{B}_{h/2} D_2 L_d ) \right) \right)
\end{align}
The self-adjoint property of the discrete Lagrangian implies that:
\begin{align*}
D_2 L_d (  \boldsymbol{q}_0, \boldsymbol{q}_1, h) &= D_1 L_d ( \boldsymbol{q}_1, \boldsymbol{q}_0, h), \\
D_1 L_d (  \boldsymbol{q}_0, \boldsymbol{q}_1, h) &= D_2 L_d ( \boldsymbol{q}_1, \boldsymbol{q}_0, h) \text{.}
\end{align*}
Hence,
\begin{align} \label{q1p1toq0p0}
& q_h   ((\boldsymbol{q}_1,\boldsymbol{p}_1),  (\boldsymbol{q}_0,\boldsymbol{p}_0)  ) =  
\frac{  | \det(D_{12} L_d(\boldsymbol{q}_0, \boldsymbol{q}_1,h) )|  }{ (2 \pi)^n  | \det( \boldsymbol{\Sigma}_{h/2}) |} \nonumber  \\
& \quad \exp  \left(  - \frac{\beta}{2}  \left( 
(D_2 L_d +  \boldsymbol{B}_{h/2}  \boldsymbol{p}_1 )^T \boldsymbol{M}^{-1} 
( \boldsymbol{Id} - \boldsymbol{B}_{h/2}^2)^{-1} (D_2 L_d + \boldsymbol{B}_{h/2} \boldsymbol{p}_1 ) \right. \right. \nonumber \\
& \qquad \qquad \left. \left.+ 
(  \boldsymbol{p}_0 -  \boldsymbol{B}_{h/2} D_1 L_d )^T \boldsymbol{M}^{-1} 
( \boldsymbol{Id} - \boldsymbol{B}_{h/2}^2)^{-1} ( \boldsymbol{p}_0 - \boldsymbol{B}_{h/2} D_1 L_d ) \right) \right)
\end{align}
From \eqref{q0p0toq1p1} and \eqref{q1p1toq0p0}, it follows that
\begin{align*}
& \frac{q_h   ((\boldsymbol{q}_1,\boldsymbol{p}_1),  (\boldsymbol{q}_0,-\boldsymbol{p}_0) )}
{q_h   ((\boldsymbol{q}_0,\boldsymbol{p}_0),  (\boldsymbol{q}_1,-\boldsymbol{p}_1) )}   =  \\
 & \quad \exp   \left(  - \frac{\beta}{2}  \left( 
D_2 L_d ^T \boldsymbol{M}^{-1} D_2 L_d - D_1 L_d ^T \boldsymbol{M}^{-1} D_1 L_d  
- \boldsymbol{p}_1^T \boldsymbol{M}^{-1} \boldsymbol{p}_1 +  \boldsymbol{p}_0^T \boldsymbol{M}^{-1} \boldsymbol{p}_0 \right)  \right)
\end{align*}
And, finally,
\begin{align*}
& \frac{q_h   ((\boldsymbol{q}_1,\boldsymbol{p}_1),  (\boldsymbol{q}_0,-\boldsymbol{p}_0) ) \pi((\boldsymbol{q}_1,\boldsymbol{p}_1))}
	  {q_h   ((\boldsymbol{q}_0,\boldsymbol{p}_0),  (\boldsymbol{q}_1,-\boldsymbol{p}_1) ) \pi((\boldsymbol{q}_0,\boldsymbol{p}_0))}   =  \\
 & \quad \exp   \left(  -\beta  \left(  \frac{1}{2} D_2 L_d ^T \boldsymbol{M}^{-1} D_2 L_d + U( \boldsymbol{q}_1 )
 						- \frac{1}{2} D_1 L_d ^T \boldsymbol{M}^{-1} D_1 L_d  - U( \boldsymbol{q}_0 )\right)  \right) \text{.}
\end{align*}
\end{proof}

The following lemma  is analogous to Lemma~\ref{MALAstagnationprobability}, and roughly speaking, quantifies how 
often rejections occur in MAGLA.

\begin{lemma}[Stagnation Probability of St\"{o}rmer-Verlet-Based MAGLA]
Consider the Metropolized St\"{o}rmer-Verlet based GLA \eqref{SVGLA}.
Assume \ref{sa} (C) and \ref{sa} (E) on the potential energy. For any integer $\ell \ge 1$, 
there exists $h_c>0$ and $K_{\ell} >0$, such that  for all positive $h<h_c$ and for all 
$\boldsymbol{x} =  (\boldsymbol{q}_0, \boldsymbol{p}_0) \in \mathbb{R}^n$,
\[
\E^{\boldsymbol{x}} \left\{   
( \alpha_h((\boldsymbol{q}_0, \boldsymbol{p}_0),(\boldsymbol{Q}_1^*, \boldsymbol{P}_1^*)) - 1 )^{2 \ell}  
\right\}  \le C(\boldsymbol{q}_0, \boldsymbol{p}_0) h^{6 \ell} \text{,}
\]
where
\begin{equation*} 
\begin{cases}
& \boldsymbol{Q}_{1}^* = \boldsymbol{q}_{0} + 
h \boldsymbol{M}^{-1} e^{-\gamma \boldsymbol{M}^{-1} h/2} \boldsymbol{p}_{0}  - \frac{h^2}{2} \boldsymbol{M}^{-1} \nabla U(  \boldsymbol{q}_{0} ) \\
& \qquad \qquad + h \sqrt{2 \beta^{-1} \gamma} \int_{t_k}^{t_k+h/2} \boldsymbol{M}^{-1} e^{- \gamma \boldsymbol{M}^{-1} (t_k+h/2-s) } d\boldsymbol{W}(s) \text{,} \\
& \boldsymbol{P}_{1}^* =  e^{-\gamma \boldsymbol{M}^{-1} h} \boldsymbol{p}_{0}
- \frac{h}{2} e^{- \gamma \boldsymbol{M}^{-1} h/2} \left( \nabla U( \boldsymbol{q}_{0}) + \nabla U( \boldsymbol{Q}_{1}^* ) \right) \\
& \qquad \qquad + \sqrt{2 \beta^{-1} \gamma} \int_{t_k}^{t_k+h} e^{- \gamma \boldsymbol{M}^{-1} (t_k + h -s ) } d \boldsymbol{W}(s) \text{.}
\end{cases}
\end{equation*}
\label{MAGLAstagnationprobability}
\end{lemma}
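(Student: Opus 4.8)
The plan is to reduce the estimate to the classical third-order energy-conservation property of the Störmer--Verlet integrator by first passing through Lemma~\ref{MAGLAacceptanceprobability}. Since the Verlet discrete Lagrangian \eqref{VerletLd} is self-adjoint, Lemma~\ref{MAGLAacceptanceprobability} applies and gives $\alpha_h((\boldsymbol{q}_0,\boldsymbol{p}_0),(\boldsymbol{Q}_1^*,\boldsymbol{P}_1^*)) = 1 \wedge \exp(-\beta\,\Delta E(\boldsymbol{q}_0,\boldsymbol{Q}_1^*))$, with $\Delta E$ given by \eqref{DeltaE}. Because $|1\wedge e^{-x}-1|\le |x|$ for every $x\in\R$, this yields the pointwise bound $(\alpha_h((\boldsymbol{q}_0,\boldsymbol{p}_0),(\boldsymbol{Q}_1^*,\boldsymbol{P}_1^*))-1)^{2\ell}\le \beta^{2\ell}\,|\Delta E(\boldsymbol{q}_0,\boldsymbol{Q}_1^*)|^{2\ell}$, so it suffices to show $\E^{\boldsymbol{x}}\{|\Delta E(\boldsymbol{q}_0,\boldsymbol{Q}_1^*)|^{2\ell}\}\le C(\boldsymbol{q}_0,\boldsymbol{p}_0)\,h^{6\ell}$. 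I would note at this point that the momentum flip in \eqref{MAGLA} is exactly what makes this reduction possible: without it the ratio in \eqref{MAGLAacceptreject} would not collapse to the energy increment of the symmetric integrator, and the resulting stagnation probability would be $\mathcal{O}(1)$ rather than $\mathcal{O}(h^3)$, which is the obstruction to pathwise accuracy alluded to before the lemma.

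Next I would make $\Delta E$ explicit. Using \eqref{VerletLd} and the update formula for $\boldsymbol{Q}_1^*$ in the statement, a direct computation of $D_1L_d$ and $D_2L_d$ at $(\boldsymbol{q}_0,\boldsymbol{Q}_1^*,h)$ gives $-D_1L_d=\boldsymbol{v}$ and $D_2L_d=\boldsymbol{v}-\tfrac{h}{2}(\nabla U(\boldsymbol{q}_0)+\nabla U(\boldsymbol{Q}_1^*))$ together with $\boldsymbol{Q}_1^*-\boldsymbol{q}_0 = h\boldsymbol{M}^{-1}\boldsymbol{v}-\tfrac{h^2}{2}\boldsymbol{M}^{-1}\nabla U(\boldsymbol{q}_0)$, where I abbreviate the ``Verlet input momentum'' $\boldsymbol{v}:=e^{-\gamma\boldsymbol{M}^{-1}h/2}\boldsymbol{p}_0 + \sqrt{2\beta^{-1}\gamma}\int_{t_k}^{t_k+h/2}e^{-\gamma\boldsymbol{M}^{-1}(t_k+h/2-s)}\,d\boldsymbol{W}(s)$. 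Substituting into \eqref{DeltaE} identifies $\Delta E(\boldsymbol{q}_0,\boldsymbol{Q}_1^*)$ as precisely the change of the Hamiltonian $H$ along one Störmer--Verlet step of size $h$ issued from $(\boldsymbol{q}_0,\boldsymbol{v})$, so that the diffusive part of GLA has been absorbed entirely into the single random vector $\boldsymbol{v}$.

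I would then condition on the Gaussian stochastic integral in $\boldsymbol{v}$, so that $\boldsymbol{v}$ is a fixed vector and $\boldsymbol{Q}_1^*$ is a deterministic function of the step size. Regarding $\Delta E$ as a function of $h$ with $(\boldsymbol{q}_0,\boldsymbol{v})$ frozen, its value and its first two $h$-derivatives at $h=0$ vanish: this is the consistency and time-symmetry (hence second-order accuracy) of Störmer--Verlet, checked by the elementary cancellation of the $\mathcal{O}(h)$ and $\mathcal{O}(h^2)$ contributions between the kinetic and potential terms in \eqref{DeltaE}. Consequently $\Delta E(\boldsymbol{q}_0,\boldsymbol{Q}_1^*)=\tfrac{h^3}{6}(\partial_h^3\Delta E)(\theta h)$ for some $\theta\in(0,1)$, where $\partial_h^3\Delta E$ is a polynomial in $\boldsymbol{v}$ and in $\nabla U$, $D^2U$, $D^3U$ evaluated along the short arc $\{\boldsymbol{q}_0+\tau\boldsymbol{M}^{-1}\boldsymbol{v}-\tfrac{\tau^2}{2}\boldsymbol{M}^{-1}\nabla U(\boldsymbol{q}_0):0\le\tau\le h\}$. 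Assumptions~\ref{sa}~(C) and~(E) bound $\nabla U$, $D^2U$, $D^3U$ along this arc by $(1+U)$, and a Grönwall estimate from (E) bounds $U$ on the arc by $(1+U(\boldsymbol{q}_0))$ times a factor growing at most exponentially in $h|\boldsymbol{v}|$; hence, for $h<h_c$, the conditional bound $|\Delta E(\boldsymbol{q}_0,\boldsymbol{Q}_1^*)|\le h^3(1+U(\boldsymbol{q}_0))\,P(|\boldsymbol{v}|)\,e^{c h|\boldsymbol{v}|}$ holds for a fixed polynomial $P$ and constant $c$.

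Finally I would un-condition. The vector $\boldsymbol{v}$ is Gaussian with mean $e^{-\gamma\boldsymbol{M}^{-1}h/2}\boldsymbol{p}_0$ (of norm at most $|\boldsymbol{p}_0|$) and covariance $\boldsymbol{\Sigma}_{h/2}\preceq\boldsymbol{\Sigma}_{h_c/2}$ uniformly for $h<h_c$, so all polynomial moments of $|\boldsymbol{v}|$ and the quantity $\E\{P(|\boldsymbol{v}|)^{2\ell}e^{2\ell c h|\boldsymbol{v}|}\}$ are finite and bounded uniformly for $h<h_c$ by a constant depending only on $(\boldsymbol{q}_0,\boldsymbol{p}_0)$. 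Raising the conditional bound to the power $2\ell$ and taking expectations gives $\E^{\boldsymbol{x}}\{|\Delta E(\boldsymbol{q}_0,\boldsymbol{Q}_1^*)|^{2\ell}\}\le C(\boldsymbol{q}_0,\boldsymbol{p}_0)\,h^{6\ell}$, which with the first step completes the proof. The main obstacle is the third step: one must verify that \emph{all} $\mathcal{O}(h)$ and $\mathcal{O}(h^2)$ terms in $\Delta E$ cancel — the essential Verlet energy-conservation fact — and then control the third-order remainder via the growth conditions on $U$ uniformly enough to absorb the random, unbounded magnitude of $\boldsymbol{v}$ when integrating against the Gaussian.
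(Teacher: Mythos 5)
Your proposal is correct and rests on the same two pillars as the paper's argument: the reduction via Lemma~\ref{MAGLAacceptanceprobability} to the energy increment $\Delta E$ of one St\"{o}rmer--Verlet step started from the half-kicked momentum, and the fact that this increment is $\mathcal{O}(h^3)$ because Verlet is a symmetric second-order integrator. The identification $-D_1L_d(\boldsymbol{q}_0,\boldsymbol{Q}_1^*,h)=\boldsymbol{v}$ is exactly the change of variables \eqref{q1toeta} that the paper performs inside the integral (with $\boldsymbol{A}_{h/2}\boldsymbol{\eta}$ in place of your stochastic integral), and your observation that the momentum flip is what collapses the Hastings ratio to $e^{-\beta\Delta E}$ matches the remark preceding the lemma. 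Where you genuinely diverge is in the technical packaging of the final estimate: the paper restricts the integral to the rejection region, Taylor expands $\Delta E$ explicitly to exhibit the $h^3$ coefficient $-\tfrac{h^3}{12}D^3U\cdot(\boldsymbol{M}^{-1}\boldsymbol{p}_0)^3+\tfrac{h^3}{4}D^2U\cdot(\boldsymbol{M}^{-1}\boldsymbol{p}_0,\boldsymbol{M}^{-1}\nabla U)$, and then invokes Laplace's method, whereas you bypass all of that with the elementary pointwise inequality $|1\wedge e^{-x}-1|\le|x|$ followed by a Lagrange-remainder bound conditioned on $\boldsymbol{v}$ and a Gaussian moment estimate. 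Your route buys simplicity and, frankly, more rigor at the two places the paper is terse (the justification of the Laplace step and the control of the remainder via Assumptions~\ref{sa}~(C) and~(E), which you make explicit with the Gr\"{o}nwall bound along the arc); what you give up is the sharp leading-order coefficient of the stagnation probability, which the lemma does not require. The one step you should not leave as an assertion is the vanishing of the first two $h$-derivatives of $\Delta E$ at $h=0$; a two-line verification (as in the paper's explicit expansion of \eqref{VerletDeltaE}) would close that and make the argument complete.
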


\begin{proof}
From Lemma~\ref{MAGLAacceptanceprobability} it is clear that,
\begin{align*}
& \E^{\boldsymbol{x}} \left\{   
( \alpha_h((\boldsymbol{q}_0, \boldsymbol{p}_0),(\boldsymbol{Q}_1^*, \boldsymbol{P}_1^*)) - 1 )^{2 \ell}  \right\}  = \\
& \qquad \int_{\mathbb{R}^{2n}}  \left( \exp(- \beta \Delta E(\boldsymbol{q}_0, \boldsymbol{q}_1) ) \wedge 1 - 1 \right)^{2 \ell} 
q_h( (\boldsymbol{q}_0, \boldsymbol{p}_0), (\boldsymbol{q}_1, \boldsymbol{p}_1)) d \boldsymbol{q}_1 d \boldsymbol{p}_1
\end{align*}
Substitute $q_h$ from \eqref{GLAdensity} into the above to obtain,
\begin{align*}
& \E^{\boldsymbol{x}} \left\{   
( \alpha_h((\boldsymbol{q}_0, \boldsymbol{p}_0),(\boldsymbol{Q}_1^*, \boldsymbol{P}_1^*)) - 1 )^{2 \ell}  \right\}  = \\
& \qquad \int_{\mathbb{R}^{2n}}   | \det(D_{12} L_d(\boldsymbol{q}_0, \boldsymbol{q}_1,h) )| 
\cdot \left( \exp(- \beta \Delta E(\boldsymbol{q}_0, \boldsymbol{q}_1) ) \wedge 1 - 1 \right)^{2 \ell}  \\
& \qquad \qquad \cdot o_{h/2}(\boldsymbol{p}_0, -D_1 L_d(\boldsymbol{q}_0, \boldsymbol{q}_1, h))  
\cdot o_{h/2} (D_2 L_d(\boldsymbol{q}_0, \boldsymbol{q}_1, h), \boldsymbol{p}_1)  d \boldsymbol{q}_1 d \boldsymbol{p}_1
\end{align*}
Integrate with respect to $\boldsymbol{p}_1$ to obtain the following simplified expression,
\begin{align*}
& \E^{\boldsymbol{x}} \left\{   
( \alpha_h((\boldsymbol{q}_0, \boldsymbol{p}_0),(\boldsymbol{Q}_1^*, \boldsymbol{P}_1^*)) - 1 )^{2 \ell}  \right\}  = \\
& \qquad \int_{\mathbb{R}^{n}}   | \det(D_{12} L_d ) | 
\cdot \left( \exp(- \beta \Delta E(\boldsymbol{q}_0, \boldsymbol{q}_1) ) \wedge 1 - 1 \right)^{2 \ell}  \\
& \qquad \qquad \cdot o_{h/2}(\boldsymbol{p}_0, -D_1 L_d(\boldsymbol{q}_0, \boldsymbol{q}_1, h))   d \boldsymbol{q}_1
\end{align*}
Let $R_h(\boldsymbol{q}_0) = \{ \boldsymbol{q}_1 \in \mathbb{R}^n : \Delta E(\boldsymbol{q}_0, \boldsymbol{q}_1) > 0 \}$.  Then,
\begin{align} \label{spinq1}
&   \E^{\boldsymbol{x}} \left\{   
( \alpha_h((\boldsymbol{q}_0, \boldsymbol{p}_0),(\boldsymbol{Q}_1^*, \boldsymbol{P}_1^*)) - 1 )^{2 \ell}  \right\}   \nonumber \\
& = \int_{R_h(\boldsymbol{q}_0)}   | \det(D_{12} L_d )| \cdot \left( e^{- \beta \Delta E(\boldsymbol{q}_0, \boldsymbol{q}_1)}  - 1 \right)^{2 \ell}  
\cdot o_{h/2}(\boldsymbol{p}_0, -D_1 L_d(\boldsymbol{q}_0, \boldsymbol{q}_1, h))   d \boldsymbol{q}_1\nonumber \\
& =   \frac{1}{( 2 \pi )^{n/2}  | \det( \boldsymbol{\Sigma}_{h/2}) | }   \int_{R_h(\boldsymbol{q}_0)}   | \det(D_{12} L_d )| \cdot 
\left( e^{- \beta \Delta E(\boldsymbol{q}_0, \boldsymbol{q}_1)} - 1 \right)^{2 \ell}  \nonumber \\
& \qquad \cdot e^{\left(-\frac{1}{2} 
\left(  -D_1 L_d(\boldsymbol{q}_0, \boldsymbol{q}_1, h) - e^{- \gamma \boldsymbol{M}^{-1} h/2} \boldsymbol{p}_0 \right)^T  \boldsymbol{\Sigma}_{h/2}^{-1} 
\left( -D_1 L_d(\boldsymbol{q}_0, \boldsymbol{q}_1, h) - e^{- \gamma \boldsymbol{M}^{-1} h/2} \boldsymbol{p}_0 \right) \right)} d \boldsymbol{q}_1  
\end{align}
Let $\boldsymbol{A}_{h/2}$ be the decomposition matrix arising from the Cholesky factorization 
of $\boldsymbol{\Sigma}_{h/2}$, i.e., $\boldsymbol{A}_{h/2} \boldsymbol{A}_{h/2}^T = \boldsymbol{\Sigma}_{h/2}$.
Introduce the map $\varphi: \mathbb{R}^n \to \mathbb{R}^n$, with $\boldsymbol{q}_1 = \varphi(\boldsymbol{\eta})$, 
and defined implicitly by the following relation:
\begin{equation} \label{q1toeta}
-D_1 L_d(\boldsymbol{q}_0, \boldsymbol{q}_1, h) = 
\exp(-\gamma \boldsymbol{M}^{-1} h/2) \boldsymbol{p}_0 + \boldsymbol{A}_{h/2} \boldsymbol{\eta} \text{.}
\end{equation}
Set $\Tilde{R}_h(\boldsymbol{q}_0) = \varphi^{-1} ( R_h(\boldsymbol{q}_0) )$.  A change of variables of \eqref{spinq1} 
under the map $\varphi$ yields,
\begin{align} \label{spineta2}
&  \E^{\boldsymbol{x}} \left\{   
( \alpha_h((\boldsymbol{q}_0, \boldsymbol{p}_0),(\boldsymbol{Q}_1^*, \boldsymbol{P}_1^*)) - 1 )^{2 \ell}  \right\}  
= \nonumber \\
& \qquad  ( 2 \pi )^{-n/2}  \int_{\Tilde{R}_h(\boldsymbol{q}_0)}   
\left( e^{- \beta \Delta E(\boldsymbol{q}_0, \varphi(\boldsymbol{\eta}))} - 1 \right)^{2 \ell}   
e^{-\frac{1}{2} | \boldsymbol{\eta} | } d \boldsymbol{\eta} 
\end{align}

Up to this point the argument has been for a general GLA.
Now, the proof is specialized to the St\"{o}rmer-Verlet based GLA.    
For the St\"{o}rmer-Verlet  discrete Lagrangian \eqref{VerletLd}:
\begin{align*}
D_1 L_d(\boldsymbol{q}_0, \boldsymbol{q}_1, h) &= 
	- \boldsymbol{M} \frac{\boldsymbol{q}_1 - \boldsymbol{q}_0}{h} - \frac{h}{2} \nabla U( \boldsymbol{q}_0 ) \text{,} \\
D_2 L_d(\boldsymbol{q}_0, \boldsymbol{q}_1, h) &=  
	\boldsymbol{M} \frac{\boldsymbol{q}_1 - \boldsymbol{q}_0}{h} - \frac{h}{2} \nabla U( \boldsymbol{q}_1 )  \text{.}
\end{align*}
Substitute these expressions into \eqref{DeltaE} gives:
\begin{align} \label{VerletDeltaE}
& \Delta E(\boldsymbol{q}_0, \boldsymbol{q}_1) =  U( \boldsymbol{q}_1 ) - U( \boldsymbol{q}_0 ) 
	- \frac{1}{2} \left\langle \nabla U( \boldsymbol{q}_1) 
	+ \nabla U( \boldsymbol{q}_0), \boldsymbol{q}_1 - \boldsymbol{q}_0 \right\rangle \nonumber \\
& \qquad + \frac{h^2}{8} \left( \nabla U( \boldsymbol{q}_1)^T \boldsymbol{M}^{-1} \nabla U( \boldsymbol{q}_1) 
	-  \nabla U( \boldsymbol{q}_0)^T \boldsymbol{M}^{-1} \nabla U( \boldsymbol{q}_0) \right)
\end{align}
Substitute the change of variables implicitly defined by \eqref{q1toeta} into \eqref{VerletDeltaE} and then
Taylor expand $\Delta E$ about $h=0$ to obtain,
\begin{align*}
& \Delta E\left(\boldsymbol{q}_0, \boldsymbol{q}_0 + 
h \boldsymbol{M}^{-1} e^{-\gamma \boldsymbol{M}^{-1} \frac{h}{2}} \boldsymbol{p}_0 
+ h \boldsymbol{M}^{-1} \boldsymbol{A}_{h/2} \boldsymbol{\eta} 
- \frac{h^2}{2} \boldsymbol{M}^{-1} \nabla U(\boldsymbol{q}_0) \right) =\\
& \qquad -\frac{h^3}{12} D^3 U( \boldsymbol{q}_0 ) \cdot \left( \boldsymbol{M}^{-1}  \boldsymbol{p}_0 \right)^3 
+ \frac{h^3}{4} D^2 U( \boldsymbol{q}_0) \cdot( \boldsymbol{M}^{-1} \boldsymbol{p}_0, \boldsymbol{M}^{-1} \nabla U( \boldsymbol{q}_0 ) ) \\
& \qquad + O(h^{7/2})
\end{align*}
As in Lemma~\ref{MALAstagnationprobability}, a standard application of Laplace's method together with
Assumption~\ref{sa} (E)  gives the desired result.
\end{proof}

Using Lemma~\ref{MAGLAstagnationprobability},  we are now in position to quantify the local accuracy of MAGLA.

\begin{lemma}[Local Accuracy of St\"{o}rmer-Verlet-Based MAGLA]
Assume \ref{sa} (C) and \ref{sa} (E) on the potential energy.  For all $h>0$ and 
$\boldsymbol{x} =  (\boldsymbol{q}_0, \boldsymbol{p}_0) \in \mathbb{R}^n$, there exists 
$C(\boldsymbol{q}_0, \boldsymbol{p}_0)>0$ such that $\mu(C) < \infty$ and
\begin{description}
\item[A)] the local mean-squared error of~MAGLA satisfies
\[
\left( \E^{\boldsymbol{x}} \left\{ \left| \boldsymbol{X}_1 -  \boldsymbol{Y}(h)  
\right|^2 \right\}  \right)^{1/2} 
\le C(\boldsymbol{q}_0, \boldsymbol{p}_0)  h^{3/2} \text{;}
\]
\item[B)] the local mean deviation of~MAGLA satisfies
\[
\left| \E^{\boldsymbol{x}} \left\{ \boldsymbol{X}_1 -  \boldsymbol{Y}(h)  \right\} \right| 
\le C(\boldsymbol{q}_0, \boldsymbol{p}_0) h^3 \text{.}
\]
\end{description}
\label{MAGLAlocalaccuracy}
\end{lemma}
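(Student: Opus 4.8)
The plan is to follow the proof of Lemma~\ref{MALAlocalaccuracy} essentially verbatim, the only structural change being that a rejected MAGLA step sends the chain to $\varphi(\boldsymbol{x})=(\boldsymbol{q}_0,-\boldsymbol{p}_0)$ rather than leaving it at $\boldsymbol{x}$. Write $\boldsymbol{X}_1^*=(\boldsymbol{Q}_1^*,\boldsymbol{P}_1^*)$ for the GLA proposal started at $\boldsymbol{x}$ and driven by the same Wiener process as $\boldsymbol{Y}(\cdot)$, and set $\alpha_h=\alpha_h((\boldsymbol{q}_0,\boldsymbol{p}_0),(\boldsymbol{Q}_1^*,\boldsymbol{P}_1^*))$, which by Lemma~\ref{MAGLAacceptanceprobability} is a function of the driving Brownian path alone. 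Conditioning on that path and integrating out the independent uniform $\zeta_0$ in the MAGLA update gives
\[
\E^{\boldsymbol{x}}\{|\boldsymbol{X}_1-\boldsymbol{Y}(h)|^2\} = \E^{\boldsymbol{x}}\{|\boldsymbol{X}_1^*-\boldsymbol{Y}(h)|^2\alpha_h\} + \E^{\boldsymbol{x}}\{|\varphi(\boldsymbol{x})-\boldsymbol{Y}(h)|^2(1-\alpha_h)\},
\]
and analogously for the mean deviation.

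For part~A, the first (accepted) term is bounded by $\E^{\boldsymbol{x}}\{|\boldsymbol{X}_1^*-\boldsymbol{Y}(h)|^2\}\le C(\boldsymbol{q}_0,\boldsymbol{p}_0)h^3$ by Lemma~\ref{GLAlocalaccuracy}~A, since $0\le\alpha_h\le1$. The rejection term is where the inertial case differs genuinely from MALA: here $|\varphi(\boldsymbol{x})-\boldsymbol{Y}(h)|^2=|\boldsymbol{q}_0-\boldsymbol{Q}(h)|^2+|\boldsymbol{p}_0+\boldsymbol{P}(h)|^2$ does \emph{not} vanish with $h$ (its leading part is $4|\boldsymbol{p}_0|^2$), so the momentum flip costs an $\mathcal{O}(1)$ local error; it is compensated by the rarity of rejections. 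Using $|\varphi(\boldsymbol{x})-\boldsymbol{Y}(h)|^2\le 8|\boldsymbol{p}_0|^2+8|\boldsymbol{Y}(h)-\boldsymbol{x}|^2$ and Cauchy--Schwarz,
\[
\E^{\boldsymbol{x}}\{|\varphi(\boldsymbol{x})-\boldsymbol{Y}(h)|^2(1-\alpha_h)\}\le 8|\boldsymbol{p}_0|^2\,\E^{\boldsymbol{x}}\{1-\alpha_h\}+8\bigl(\E^{\boldsymbol{x}}\{|\boldsymbol{Y}(h)-\boldsymbol{x}|^4\}\bigr)^{1/2}\bigl(\E^{\boldsymbol{x}}\{(1-\alpha_h)^2\}\bigr)^{1/2}.
\]
Lemma~\ref{MAGLAstagnationprobability} with $\ell=1$ gives $\E^{\boldsymbol{x}}\{(1-\alpha_h)^2\}\le C(\boldsymbol{q}_0,\boldsymbol{p}_0)h^6$, hence $\E^{\boldsymbol{x}}\{1-\alpha_h\}\le C(\boldsymbol{q}_0,\boldsymbol{p}_0)h^3$; and the solution-moment estimates used in the proof of Lemma~\ref{GLAlocalaccuracy} (the inertial counterpart of Lemma~\ref{sdemomentbound}) give $\E^{\boldsymbol{x}}\{|\boldsymbol{Y}(h)-\boldsymbol{x}|^4\}\le C(\boldsymbol{q}_0,\boldsymbol{p}_0)h^2$. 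So the rejection term is $\le C(\boldsymbol{q}_0,\boldsymbol{p}_0)h^3$ with $C$ a polynomial in $(\boldsymbol{q}_0,\boldsymbol{p}_0)$, hence $\mu$-integrable since $\pi$ is Gaussian in $\boldsymbol{p}$ and, by Assumption~\ref{sa}~(A), $e^{-\beta U}$ has all polynomial moments in $\boldsymbol{q}$. Adding the two contributions and taking square roots proves part~A.

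For part~B I would write
\[
\E^{\boldsymbol{x}}\{\boldsymbol{X}_1-\boldsymbol{Y}(h)\}=\E^{\boldsymbol{x}}\{\boldsymbol{X}_1^*-\boldsymbol{Y}(h)\}-\E^{\boldsymbol{x}}\{(\boldsymbol{X}_1^*-\boldsymbol{Y}(h))(1-\alpha_h)\}+\E^{\boldsymbol{x}}\{(\varphi(\boldsymbol{x})-\boldsymbol{Y}(h))(1-\alpha_h)\}.
\]
The first term is $\le C(\boldsymbol{q}_0,\boldsymbol{p}_0)h^3$ by Lemma~\ref{GLAlocalaccuracy}~B. The third is $\le(\E^{\boldsymbol{x}}\{|\varphi(\boldsymbol{x})-\boldsymbol{Y}(h)|^2\})^{1/2}(\E^{\boldsymbol{x}}\{(1-\alpha_h)^2\})^{1/2}\le C(\boldsymbol{q}_0,\boldsymbol{p}_0)h^3$ exactly as above. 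The middle term is even smaller: by Cauchy--Schwarz it is $\le(\E^{\boldsymbol{x}}\{|\boldsymbol{X}_1^*-\boldsymbol{Y}(h)|^2\})^{1/2}(\E^{\boldsymbol{x}}\{(1-\alpha_h)^2\})^{1/2}\le C(\boldsymbol{q}_0,\boldsymbol{p}_0)h^{9/2}$ using Lemmas~\ref{GLAlocalaccuracy}~A and~\ref{MAGLAstagnationprobability}. Summing gives $|\E^{\boldsymbol{x}}\{\boldsymbol{X}_1-\boldsymbol{Y}(h)\}|\le C(\boldsymbol{q}_0,\boldsymbol{p}_0)h^3$.

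The obstacle is conceptual rather than computational: because of the momentum flip a single rejected MAGLA step produces an $\mathcal{O}(1)$ local error, so — unlike for MALA — one cannot argue that the rejection error itself is small. The estimate instead leans entirely on the \emph{sharpened} stagnation bound of Lemma~\ref{MAGLAstagnationprobability} (rejection probability $\mathcal{O}(h^6)$, versus $\mathcal{O}(h^3)$ for MALA), which is precisely what the self-adjointness of the variational integrator and the modified detailed balance — the $\Delta E$ identity of Lemma~\ref{MAGLAacceptanceprobability} — deliver; in this sense the momentum flip is essential. Everything else is Cauchy--Schwarz together with Lemmas~\ref{GLAlocalaccuracy} and~\ref{MAGLAstagnationprobability} and the standard moment bounds on the solution of \eqref{SDE2}.
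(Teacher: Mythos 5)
Your proposal is correct and follows exactly the route the paper sketches in the Remark after Lemma~\ref{MAGLAlocalaccuracy}: split the error into the accepted-move contribution (bounded by Lemma~\ref{GLAlocalaccuracy}) and the rejected-move contribution, which is $\mathcal{O}(1)$ in size because of the momentum flip but is weighted by the $\mathcal{O}(h^{6})$ mean-square rejection probability of Lemma~\ref{MAGLAstagnationprobability}, yielding $\mathcal{O}(h^{3})$ overall. You have merely filled in the Cauchy--Schwarz details that the paper omits, and your accounting of the orders in both parts A and B matches the stated bounds.
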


\begin{Remark}
A detailed proof is omitted since similar steps are taken 
in the proof of Lemma \ref{MALAlocalaccuracy}.  
We remark that it follows from the definition of MAGLA \eqref{MAGLA},
\begin{align*}
 & \E^{\boldsymbol{x}} \left\{ \left| \boldsymbol{X}_1 -  \boldsymbol{Y}(h)  \right|^2 \right\}  \le  
 \E^{\boldsymbol{x}} \left\{ \left| \boldsymbol{X}_1^* -  \boldsymbol{Y}(h)  \right|^2 \right\} \\
&\qquad  +  \E^{\boldsymbol{x}} \left\{ \left| \varphi(\boldsymbol{q}_0, \boldsymbol{p}_0 ) -  \boldsymbol{Y}(h)  \right|^2 
 (1 - \alpha_h((\boldsymbol{q}_0, \boldsymbol{p}_0), (\boldsymbol{Q}_1^*, \boldsymbol{P}_1^*) ) ) \right\}
\end{align*}
This estimate reveals that the local mean-squared error of MAGLA is a sum of the local mean-squared
error of GLA (cf. Lemma \ref{GLAlocalaccuracy}) and the error that arises from a potential rejection weighted 
by the probability of rejection.   This error is $O(1)$ because when a rejection happens in \eqref{MAGLA}, 
the momentum is flipped.  However,  the probability of rejection is within the order of accuracy of the 
method according to Lemma \ref{MAGLAstagnationprobability}.  
\end{Remark}

\begin{Assumption}[Globally Lipschitz Continuous Process]
For $s \le t$,  let $\boldsymbol{Y}_{t,s}( \boldsymbol{x})$ denote 
the evolution operator of the solution to \eqref{SDE2}: 
with $\boldsymbol{Y}_{s,s}(\boldsymbol{x}) = \boldsymbol{x}$ and for  $r \le s \le t$ 
recall the Chapman-Kolmogorov identity 
$\boldsymbol{Y}_{t,s} \circ \boldsymbol{Y}_{s,r}(\boldsymbol{x}) = \boldsymbol{Y}_{t,r}(\boldsymbol{x})$.   
Set
\[
\boldsymbol{\Delta} =  
\boldsymbol{Y}_{s+h,s}(\boldsymbol{x}) - \boldsymbol{Y}_{s+h,s}(\boldsymbol{y}) - ( \boldsymbol{x} -\boldsymbol{y} ) \text{,}
\]
For every $K>0$ there exists a $h_c>0$, such that for all positive $h<h_c$, for all 
$\boldsymbol{x}, \boldsymbol{y} \in \mathbb{R}^n$, and for all $s\ge0$,
\begin{description}
\item[A)]
\[
\E \left\{ \left| \boldsymbol{Y}_{s+h,s}(\boldsymbol{x}) - \boldsymbol{Y}_{s+h,s}(\boldsymbol{y})  \right|^2 \right\} 
\le | \boldsymbol{x} - \boldsymbol{y} |^2  ( 1+  K h) \text{;}
\]
\item[B)]
\[
\E \left\{  \left| \boldsymbol{\Delta}  \right|^2 \right\} 
\le   K h^2 | \boldsymbol{x} - \boldsymbol{y} | \text{.}
\]
\end{description}
\label{sa2}
\end{Assumption}

As discussed MAGLA involves a momentum flip whenever a proposal move is rejected.  
Despite this MAGLA provides a pathwise approximant to \eqref{SDE2} as summarized by 
the following theorem.

\begin{thm23}[St\"{o}rmer-Verlet-Based MAGLA Strong Accuracy from Equilibrium]
Assume \ref{sa} (C) and \ref{sa} (E) on the potential energy, and \ref{sa2}  on \eqref{SDE2}.    
Then for every $T>0$,  there exists $h_c > 0$ and $C(T)>0$ such that for all
  $h<h_c$, for all $\boldsymbol{x} \in \mathbb{R}^{2n}$, and
  for all $t \in [0, T]$,
\[
\left( \E_{\mu}  \E^{\boldsymbol{x}} 
\left\{ \left| \boldsymbol{X}_{\lfloor t/ h \rfloor} - \boldsymbol{Y}(t) \right|^2 \right\} 
\right)^{1/2} \le C(T) h \text{.}
\]
\end{thm23}

\begin{Remark}
A detailed proof is omitted as it is similar to the proof of Theorem~\ref{MALAaccuracy}.
Recall, the main tools needed for that proof are local accuracy of MAGLA which follows
from Lemma~\ref{MAGLAlocalaccuracy}, and the invariance of $\mu$ under both
the transition kernel of the solution to \eqref{SDE2} and the Markov chain defined
by St\"{o}rmer-Verlet-based MAGLA.
\end{Remark}



\section{Conclusion} \label{sec:Conclusion}

This paper examined the pathwise accuracy of Metropolized integrators applied to overdamped
and inertial Langevin dynamics.   The paper primarily analyzed explicit discretizations of 
these ergodic SDEs.  For explicit discretizations if the drift is nonglobally Lipschitz, 
discretization errors often lead to stochastically unstable Markov chains.    A Metropolis-Hastings
method was shown to stochastically stabilize these discretizations.   The paper used this stochastic
stability to quantify the error induced by the Metropolis-Hastings methods in approximating the solution
to the SDE.

Although the paper restricted to specific discretizations of 
these SDEs as proposal moves, the strategy to prove the results is rather general.
The strategy can be used to prove pathwise accuracy of Metropolized versions of higher order accurate, adaptive
and multistep discretizations of overdamped or inertial Langevin dynamics (See, e.g., \cite{MiTr2004, VaCi2006}.).
In fact, this strategy applies to a larger class of SDEs, namely those which possess a transition density that
satisfies detailed balance-type condition.

Our strategy relies on two main ingredients. First, that the local error induced by the  
Monte-Carlo method depends upon the time-step size in an algebraic fashion.  
For example, often we found that the local mean-squared error of the Metropolized
Integrator can be decomposed as:
\begin{align*}
&\E^{\boldsymbol{x}}  \left\{    \left| \boldsymbol{X}_1 - \boldsymbol{Y}(h) \right|^2  \right\} = \\
 &\qquad  \underset{ \text{Accepted Proposal Move} }{\underbrace{ \E^{\boldsymbol{x}} \{ \left| \boldsymbol{X}^*_1 - \boldsymbol{Y}(h) \right|^2 \alpha_h(\boldsymbol{x},\boldsymbol{X}^*_1) \} }}
+ \underset{ \text{Rejected Proposal Move} }{\underbrace{ \E^{\boldsymbol{x}} \{  \left| \varphi(\boldsymbol{x})-\boldsymbol{Y}(h) \right|^2 (1- \alpha_h(\boldsymbol{x},\boldsymbol{X}^*_1))   \}  }} 
\end{align*}
where $\boldsymbol{X}_1$ is a single step of the Metropolis-adjusted discretization,
$\boldsymbol{Y}(h)$ is the exact solution of the SDE after a single step, 
$\boldsymbol{X}^*_1$ is the proposal move step (a single step of the unadjusted discretization), 
$ \alpha_h$ is the probability of accepting a proposal move, and 
$\varphi$ is a deterministic map applied if a proposal move is rejected.
The first term is bounded by $\E^{\boldsymbol{x}} \{ \left| \boldsymbol{X}^*_1 - \boldsymbol{Y}(h) \right|^2 \}$, 
i.e., the local order of accuracy of the discretization.
The second term is bounded by using the Cauchy-Schwarz inequality, together with 
bounds on $\E^{\boldsymbol{x}} \{  \left| \varphi(\boldsymbol{x})-\boldsymbol{Y}(h) \right|^4 \}^{1/2}$ and 
$\E^{\boldsymbol{x}} \{ (1- \alpha_h(\boldsymbol{x},\boldsymbol{X}^*_1))^2 \}^{1/2}$.
For MALA $\varphi$ was the identity, so that a rejected proposal
was nearby the solution. To be precise, 
$\E^{\boldsymbol{x}} \{  \left| \varphi(\boldsymbol{x})-\boldsymbol{Y}(h) \right|^4 \}^{1/2} \le O(h)$.
The likelihood of a rejection to occur turns out to be 
$\E^{\boldsymbol{x}} \{ (1- \alpha_h(\boldsymbol{x},\boldsymbol{X}^*_1))^2 \}^{1/2} \le O(h^{3/2})$.
Since the local mean-squared error of the unadjusted discretization was $O(h^{3/2})$, the local
mean-squared error of MALA was dominated by the error incurred by rejected proposal moves
which was $O(h^{5/4})$.

For MAGLA $\varphi$ was not the identity, but involved a momentum flip.   This momentum flip 
was imperative because the exact transition density of inertial Langevin dynamics does not 
satisfy detailed balance, but a modified detailed balance condition that is derived 
from composing the solution of inertial Langevin with a momentum flip.     
Consequently, a rejected proposal move for MAGLA loses local accuracy, i.e.,
$\E^{\boldsymbol{x}} \{  \left| \varphi(\boldsymbol{x})-\boldsymbol{Y}(h) \right|^4 \}^{1/2} \sim O(1)$.   
Nevertheless, the probability of rejection turns out to be within
the order of accuracy of the SV-based GLA, i.e., 
$\E^{\boldsymbol{x}} \{ (1- \alpha_h(\boldsymbol{x},\boldsymbol{X}^*_1))^2 \}^{1/2} \le O(h^3)$. 
Thus, we found the error incurred by a rejected proposal move to be of the same order as that
of an accepted proposal move.  Hence, the local mean-squared accuracy of MAGLA was 
$O(h^{3/2})$.

The second ingredient involves bounds on moments of the integrator that are 
uniform in the time-step size.  These bounds enabled the local error estimates
to be extended to global error estimates, and hence, pathwise convergence on finite time-intervals.  
These bounds are automatic if initial conditions are restricted to the 
known equilibrium distribution the Metropolis-adjusted integrator is designed to sample.
To relax this restriction on initial conditions, geometric ergodicity played a key role to 
obtain such bounds.  MALA is often ergodic, but because its proposal chain is 
often transient, is not geometrically ergodic.  This transience is due to a numerical
instability in forward Euler for large energy values.  By suitably truncating the drift 
at high energy values, one can ensure the proposal dynamics is not transient.   
MALA with truncated drift, or MALTA is known to often be geometrically ergodic on 
infinite time-intervals where MALA is not.  The paper used this property to 
prove MALTA is pathwise convergent on finite time-intervals without a restriction
on its initial condition.  We remark that MALTA can be generalized to explicit 
higher-order, multistep and adaptive integrators for overdamped and inertial
Langevin equations.

Finally, we emphasize that GLA can be extended to non-flat configuration spaces and 
multiple time-steps.  Indeed, its underlying variational integrator can 
incorporate holonomic constraints (via, e.g., Lagrange multipliers) \cite{WeMa1997} and 
multiple time steps to obtain so-called asynchronous variational integrators \cite{LeMaOrWe2003}. 
A natural choice for the variational integrator would be a SHAKE or RATTLE algorithm 
\cite{WeMa1997, MaWe2001}.  The Ornstein-Uhlenbeck equations 
\eqref{OrnsteinUhlenbeck} are defined on a flat vector space since the configuration 
is held fixed.  Moreover, a review of the proof of Lemma~\ref{MAGLAacceptanceprobability} 
shows that the probability transition density of GLA can be explicitly  characterized even if 
the configuration space is not flat.  By inspection, this density is absolutely 
continuous with respect to the standard volume measure on that non-flat space.   Consequently, 
GLA on manifolds can be used as proposal dynamics in a Metropolis-Hastings context, and 
MAGLA can be extended to non-flat configuration spaces to treat, e.g., inertial Langevin 
equations with holonomic constraints as in \cite{VaCi2006}.  Moreover, MAGLA 
on manifolds can readily classified as an ergodic Markov chain on that non-flat phase 
space.

\paragraph{Acknowledgements}

We thank Christof Sch\"{u}tte for stimulating discussions.  We thank 
Tony Lelievre, Sebastian Reich, and Gabriel Stoltz for suggestions on an 
earlier version of this paper.


\section{Proof of Lemmas~\ref{sdemomentbound},  \ref{regularityofsolutions} and \ref{ULAlocalaccuracy}}
\label{sec:prooflemma}

\paragraph{Proof of Lemma~\ref{sdemomentbound}}

\begin{description}
\item[Lemma~\ref{sdemomentbound} A)]
Let $G(\boldsymbol{x}) := U(\boldsymbol{x})^{\ell}$.  By the Taylor-Ito formula 
\[
 d G( \boldsymbol{Y}(t) ) = L \left\{ G (\boldsymbol{Y}(t) ) \right\} dt + \text{Martingale} \text{.}
\]
Assumption~\ref{sa}~(B) on the generator of the (SDE) implies
\[
\E^{\boldsymbol{x}} \left\{ G( \boldsymbol{Y}(t)  ) \right\} \le 
e^{- \delta_{\ell} t} G(\boldsymbol{x}) + \frac{M_{\ell}}{\delta_{\ell}} ( 1 - e^{-\delta_{\ell} t} ) \text{.}
\]
From this expression it is clear that  for all $t>0$ and for all $\boldsymbol{x} \in \mathbb{R}^n$
\[
\E^{\boldsymbol{x}} \left\{ G( \boldsymbol{Y}(t)  ) \right\} \le 
G(\boldsymbol{x}) + \frac{M_{\ell}}{\delta_{\ell}}  \text{.}
\]

\item[Lemma~\ref{sdemomentbound} B)]
Assumption~\ref{sa}~(A) implies 
\[
G(\boldsymbol{x}) \ge K^{\ell}  | \boldsymbol{x} |^{ \ell},~~\forall~\boldsymbol{x} \in \mathbb{R}^n  \text{.}
\]
Hence, the upper bound derived in the proof of Lemma~\ref{sdemomentbound} (A) 
on $\E^{\boldsymbol{x}} \left\{ G( \boldsymbol{Y}(t) ) \right\}$ implies an upper bound 
on $\E^{\boldsymbol{x}} \left\{  \left| \boldsymbol{Y}(t) \right|^{\ell} \right\} $.

\item[Lemma~\ref{sdemomentbound} C)]
From the solution to \eqref{SDE1} it is apparent
\[
\E^{\boldsymbol{x}} \left\{  | \boldsymbol{Y}(t) - \boldsymbol{x} |^{2 \ell}  \right\} 
= \E^{\boldsymbol{x}} \left\{  \left|  - \int_0^t \nabla U( \boldsymbol{Y}(s) ) ds + \sqrt{2 \beta^{-1} } \int_0^t d \boldsymbol{W} \right|^{2 \ell} \right\}
\]
The triangle inequality implies that,
\[
\E^{\boldsymbol{x}} \left\{  | \boldsymbol{Y}(t) - \boldsymbol{x} |^{2 \ell}  \right\}  
\le  \E^{\boldsymbol{x}} \left\{  \left( \left| \int_0^t \nabla U( \boldsymbol{Y}(s) ) ds \right|
						+ \sqrt{2 \beta^{-1} }  \left|   \int_0^t d \boldsymbol{W} \right|  \right)^{2 \ell} \right\}
\]
Since the function $g(x) = x^{2 \ell}$ is strictly convex,
\[
\E^{\boldsymbol{x}} \left\{  | \boldsymbol{Y}(t) - \boldsymbol{x} |^{2 \ell}  \right\}  
\le 2^{2 \ell - 1}  \E^{\boldsymbol{x}} \left\{  \left| \int_0^t \nabla U( \boldsymbol{Y}(s) ) ds \right|^{2 \ell} 
								+ (2 \beta^{-1})^{\ell}  \left|   \int_0^t d \boldsymbol{W} \right|^{2 \ell}  \right\}
\]
Recall, that the higher order central moments of a normal random variable 
$\xi \sim \mathcal{N}(0,t)$ are given by:
\[
\E \left\{ \xi^{2 \ell} \right\}  = \frac{(2 \ell)!}{2^{\ell} \ell!} t^{\ell} \text{.}
\]
Since the function $g(x) = x^{\ell}$ is convex for positive reals,
\[
\E^{\boldsymbol{x}} \left\{  \left|   \int_0^t d \boldsymbol{W} \right|^{2 \ell} \right\}  
\le n^{\ell} \E^{\boldsymbol{x}} \left\{  \xi^{2 \ell} \right\}  =  \frac{(2 \ell)!}{\ell!}  \left(\frac{n t}{2}\right)^{\ell}  \text{.}
\]
An application of Cauchy-Schwarz inequality yields,
\[
\E^{\boldsymbol{x}} \left\{ \left|  \int_0^t \nabla U( \boldsymbol{Y}(s) ) ds \right|^{2 \ell} \right\} 
\le t^{\ell } \E^{\boldsymbol{x}} \left\{ \left( \int_0^t \left|  \nabla U( \boldsymbol{Y}(s) ) \right|^2 ds \right)^{\ell} \right\} \text{.}
\]
According to Assumption~\ref{sa}~(E), there exists a constant $K>0$ such that
\[
\int_0^t \left|  \nabla U( \boldsymbol{Y}(s) ) \right|^2 ds  
\le  K \int_0^t (1+U(\boldsymbol{Y}(s) )^{2} ) ds  \text{.}
\]
An application of H\"{o}lder's inequality and Lemma~\ref{sdemomentbound} (A) 
imply there exists a possibly different constant $K>0$ such that
\[
\E^{\boldsymbol{x}} \left\{ \left|  \int_0^t \nabla U( \boldsymbol{Y}(s) ) ds \right|^{2 \ell} \right\}  
\le K t^{2 \ell } (1 + U(\boldsymbol{x})^{2 \ell} ) \text{.}
\]
Hence,
\[
\E^{\boldsymbol{x}} \left\{  | \boldsymbol{Y}(t) - \boldsymbol{x} |^{2 \ell}  \right\} 
\le  2^{2 \ell-1} \left(  t^{2 \ell} K (1+U(\boldsymbol{x})^{2 \ell} )  
				+  t^{\ell} \frac{(2 \ell)!}{\ell!}  \left(\frac{n}{\beta}\right)^{\ell}  \right) \text{.}
\]
\end{description}

\paragraph{Proof of Lemma~\ref{regularityofsolutions}.}

\begin{description}
\item[Lemma~\ref{regularityofsolutions} A)]
By the Taylor-Ito formula,
\begin{align*}
&\left| \boldsymbol{Y}_{s+h,s}(\boldsymbol{x}) - \boldsymbol{Y}_{s+h,s}(\boldsymbol{y})  \right|^2 = | \boldsymbol{x} - \boldsymbol{y} |^2 \\
&+ 2 \int_0^h \left\langle   \boldsymbol{Y}_{s+r,s}(\boldsymbol{x}) - \boldsymbol{Y}_{s+r,s}(\boldsymbol{y}),
\nabla U(\boldsymbol{Y}_{s+r,s}(\boldsymbol{y}))  - \nabla U(\boldsymbol{Y}_{s+r,s}(\boldsymbol{x})) \right\rangle dr
\end{align*}
According to Assumption~\ref{sa}~(D),
\begin{align*}
 \left| \boldsymbol{Y}_{s+h,s}(\boldsymbol{x}) - \boldsymbol{Y}_{s+h,s}(\boldsymbol{y})  \right|^2 
  \le | \boldsymbol{x} - \boldsymbol{y} |^2  
  + 2 K \int_0^h   \left|   \boldsymbol{Y}_{s+r,s}(\boldsymbol{x}) - \boldsymbol{Y}_{s+r,s}(\boldsymbol{y}) \right|^2  dr
\end{align*}
Gronwall's lemma implies that,
\begin{align*}
\left| \boldsymbol{Y}_{s+h,s}(\boldsymbol{x}) - \boldsymbol{Y}_{s+h,s}(\boldsymbol{y})  \right|^2 
\le | \boldsymbol{x} - \boldsymbol{y} |^2  \exp(2 h K)
\end{align*}

\item[Lemma~\ref{regularityofsolutions} B)]
To prove the second inequality, observe that:
\[
\left| \boldsymbol{\Delta}  \right|^2 = 
\left| \int_0^h \left( \nabla U(\boldsymbol{Y}_{s+r,s}(\boldsymbol{y}))  - \nabla U(\boldsymbol{Y}_{s+r,s}(\boldsymbol{x})) \right)  dr \right|^2
\]
An application of the Cauchy-Schwarz inequality implies,
\[
 \left| \boldsymbol{\Delta}  \right|^2 \le 
 h  \int_0^h \left|  \nabla U(\boldsymbol{Y}_{s+r,s}(\boldsymbol{y}))  - \nabla U(\boldsymbol{Y}_{s+r,s}(\boldsymbol{x})) \right|^2 dr 
\]
Assumption~\ref{sa}~(C) implies,
\[
 \left| \boldsymbol{\Delta}  \right|^2 \le 
 h  \int_0^h (U(\boldsymbol{Y}_{s+r,s}(\boldsymbol{y}))  +U(\boldsymbol{Y}_{s+r,s}(\boldsymbol{x})) ) 
 \left| \boldsymbol{Y}_{s+r,s}(\boldsymbol{y}) -\boldsymbol{Y}_{s+r,s}(\boldsymbol{x}) \right|^2 dr 
\]
Assumption~\ref{sa}~(A) and the triangle inequality imply there exists a constant $K>0$ such that
\[
 \left| \boldsymbol{\Delta}  \right|^2 \le 
 h  K \int_0^h (U(\boldsymbol{Y}_{s+r,s}(\boldsymbol{y}))^2  +U(\boldsymbol{Y}_{s+r,s}(\boldsymbol{x}))^2 ) 
 \left| \boldsymbol{Y}_{s+r,s}(\boldsymbol{y}) -\boldsymbol{Y}_{s+r,s}(\boldsymbol{x}) \right| dr 
\]
The Cauchy-Schwarz inequality implies that 
\begin{align*}
\E  \left\{ \left| \boldsymbol{\Delta}  \right|^2 \right\}  \le 
h  K \int_0^h & \left( \E\left\{ U(\boldsymbol{Y}_{s+r,s}(\boldsymbol{y}))^4 \right\}^{1/2}  
			    + \E\left\{  U(\boldsymbol{Y}_{s+r,s}(\boldsymbol{x}))^4 \right\}^{1/2}  \right)  \\
& \cdot \E \left\{ \left| \boldsymbol{Y}_{s+r,s}(\boldsymbol{y}) -\boldsymbol{Y}_{s+r,s}(\boldsymbol{x}) \right|^2  \right\}^{1/2} dr 
\end{align*}
Lemma~\ref{sdemomentbound} implies with a possibly different constant $K>0$
\begin{align*}
\E  \left\{ \left| \boldsymbol{\Delta}  \right|^2 \right\}  \le h  K  (1+ U(\boldsymbol{y})^2  + U(\boldsymbol{x})^2   ) 
\int_0^h \E \left\{ \left| \boldsymbol{Y}_{s+r,s}(\boldsymbol{y}) -\boldsymbol{Y}_{s+r,s}(\boldsymbol{x}) \right|^2  \right\}^{1/2} dr    \text{.}
\end{align*}
While the first part of this lemma implies, again with a possibly different constant $K>0$ that,
\begin{align*}
\E  \left\{ \left| \boldsymbol{\Delta}  \right|^2 \right\} 
 \le h^2  K  (1+ U(\boldsymbol{y})^2  +    U(\boldsymbol{x})^2   ) \left| \boldsymbol{x} - \mathbf{y} \right|  \text{.}
\end{align*}
\end{description}

\paragraph{Proof of Lemma~\ref{ULAlocalaccuracy}}

\begin{description} 
\item[Lemma~\ref{ULAlocalaccuracy} A)]
According to \eqref{SDE1} and \eqref{ULA}, the difference between the Euler-Maruyama 
discretization and the solution to the SDE after a single step takes the form:
\[
\Tilde{\boldsymbol{X}}_1 -  \boldsymbol{Y}(h)  
= - \int_0^h \left( \nabla U(\boldsymbol{x}) - \nabla U( \boldsymbol{Y}(s) \right) ds 
\]
The Cauchy-Schwarz inequality implies
\[
 \E^{\boldsymbol{x}} \left\{ \left| \Tilde{\boldsymbol{X}}_1 -  \boldsymbol{Y}(h)  \right|^2 \right\}  \le 
 h  \int_0^h \E^{\boldsymbol{x}} \left\{ \left| \nabla U(\boldsymbol{x}) - \nabla U( \boldsymbol{Y}(s) ) \right|^2 \right\} ds 
\]
Assumption~\ref{sa} (C) implies that there is a constant $K>0$ such that,
\begin{align*}
 \E^{\boldsymbol{x}} \left\{ \left| \Tilde{\boldsymbol{X}}_1  -  \boldsymbol{Y}(h)  \right|^2 \right\}  &\le 
 h K  \E^{\boldsymbol{x}} \left\{ \int_0^h ( U(\boldsymbol{x})^2 + U(\boldsymbol{Y}(s) )^2 )
  \left| \boldsymbol{x} - \boldsymbol{Y}(s) \right|^2 ds \right\}  \text{.}
\end{align*} 
A second application of Cauchy-Schwarz yields,
\begin{align*}
 \E^{\boldsymbol{x}} \left\{ \left|  \Tilde{\boldsymbol{X}}_1  -  \boldsymbol{Y}(h)  \right|^2 \right\}  &\le h K  \left(
 \int_0^h   U(\boldsymbol{x})^2 \E^{\boldsymbol{x}} \left\{ \left| \boldsymbol{x} - \boldsymbol{Y}(s) \right|^2 \right\} ds \right. \\
&+ \left.  \int_0^h \E^{\boldsymbol{x}} \left\{ U(\boldsymbol{Y}(s) )^4 \right\}^{1/2}  
\E^{\boldsymbol{x}} \left\{ \left| \boldsymbol{x} - \boldsymbol{Y}(s) \right|^4 \right\}^{1/2} ds 
 \right) \text{.}
\end{align*} 
Lemma~\ref{sdemomentbound} implies that there exists a constant $K>0$ such that
\begin{align*}
 \E^{\boldsymbol{x}} \left\{ \left| \Tilde{\boldsymbol{X}}_1 -  \boldsymbol{Y}(h)  \right|^2 \right\}   
 &\le K h^3 (1+ U(\boldsymbol{x})^4 )  \text{.}
\end{align*}

\item[Lemma~\ref{ULAlocalaccuracy} B)]
The Ito-Taylor formula implies that,
\[
\left| \E^{\boldsymbol{x}} \left\{ \Tilde{\boldsymbol{X}}_1 -  \boldsymbol{Y}(h) \right\}  \right|^2 =  
\left| \E^{\boldsymbol{x}}  \left\{ \int_0^h \int_0^s L \left\{  \nabla U( \boldsymbol{Y}(r) ) \right\} dr ds \right\} \right|^2
\]
Applying the Cauchy-Schwarz inequality twice gives,
\begin{align*}
\left| \E^{\boldsymbol{x}} \left\{ \Tilde{\boldsymbol{X}}_1 -  \boldsymbol{Y}(h) \right\}  \right|^2 
&\le  h \int_0^h  \left| \E^{\boldsymbol{x}}  \left\{ \int_0^s L \left\{  \nabla U( \boldsymbol{Y}(r) ) \right\} dr  \right\} \right|^2 ds  \\
&\le  h \int_0^h s  \int_0^s \left| \E^{\boldsymbol{x}}  \left\{ L \left\{  \nabla U( \boldsymbol{Y}(r) ) \right\}  \right\} \right|^2 dr ds 
\end{align*}
Jensen's inequality implies that,
\begin{align*}
\left| \E^{\boldsymbol{x}} \left\{ \Tilde{\boldsymbol{X}}_1 -  \boldsymbol{Y}(h) \right\}  \right|^2 
&\le  h \int_0^h s  \int_0^s  \E^{\boldsymbol{x}}  \left\{  \left| L \left\{  \nabla U( \boldsymbol{Y}(r) ) \right\}  \right|^2 \right\} dr ds 
\end{align*}
Assumption~\ref{sa} (E) implies the existence of a $K>0$ such that
\begin{align*}
\left| \E^{\boldsymbol{x}} \left\{ \Tilde{\boldsymbol{X}}_1 -  \boldsymbol{Y}(h) \right\}  \right|^2 
&\le  h \int_0^h s  \int_0^s     K ( 1+ \E^{\boldsymbol{x}} \left\{ U( \boldsymbol{Y}(r))^4  \right\} )   dr ds 
\end{align*}
An application of Lemma~\ref{sdemomentbound}  gives the required result.
\end{description}

\newpage

\bibliographystyle{amsplain}
\bibliography{nawaf}

 \end{document}